 \let\mathscr\relax
\theoremstyle{definition}
\newtheorem{defin}{Definition}[section]
\theoremstyle{definition}
\theoremstyle{plain}
\newtheorem{theo}[defin]{Theorem}
\theoremstyle{plain}
\newtheorem{prop}[defin]{Proposition}
\theoremstyle{plain}
\newtheorem{lem}[defin]{Lemma}
\theoremstyle{plain}
\newtheorem{cor}[defin]{Corollary}
\theoremstyle{definition}
\theoremstyle{definition}
\theoremstyle{definition}
\theoremstyle{plain}
\newtheorem{conj}[defin]{Conjecture}
\theoremstyle{definition}
\theoremstyle{definition}
\newtheorem{hyp}[defin]{Hypothesis}
\theoremstyle{definition}
\newtheorem*{defin*}{Definition}
\theoremstyle{definition}
\newtheorem*{ex*}{Example}
\theoremstyle{plain}
\newtheorem*{theo*}{Theorem}
\theoremstyle{plain}
\newtheorem*{prop*}{Proposition}
\theoremstyle{plain}
\newtheorem*{lem*}{Lemma}
\theoremstyle{plain}
\newtheorem*{cor*}{Corollary}
\theoremstyle{definition}
\newtheorem*{rmk*}{Remark}
\theoremstyle{definition}
\newtheorem*{exe*}{Exercise}
\theoremstyle{plain}
\def\thm@space@setup{%
  \thm@preskip=\parskip \thm@postskip=0pt
}
\theoremstyle{plain}
\newtheorem{theoA}{Theorem}[section]
\theoremstyle{plain}
\newtheorem{corA}[theoA]{Corollary}
\numberwithin{equation}{section}
\definecolor{myblue}{RGB}{0,82,155}
\titleformat{\chapter}[display]
  {\normalfont\bfseries\color{black}}
  {\centering
    {}    
    {\fontsize{50}{50}\bfseries\selectfont\thechapter}
  }
  {5pt}
  {\titlerule[1.5pt]\vskip1pt\titlerule\vskip5pt\centering\Huge\selectfont}
\setlist[enumerate]{label=(\roman*)}
\def\Br{{\rm Br}}
\def\Bl{{\rm Bl}}
\def\bl{{\rm bl}}
\def\h{{\rm ht}}
\def\irr{{\rm Irr}}
\def\proj{{\rm Proj}}
\def\ker{{\rm Ker}}
\def\syl{{\rm Syl}}
\def\tr{{\rm tr}}
\def\ind{{\rm Ind}}
\def\n{{\mathbf{N}}}
\def\c{{\mathbf{C}}}
\def\z{{\mathbf{Z}}}
\def\O{{\mathbf{O}}}
\def\d{{\mathbb{D}}} 
\def\e{{\mathbb{E}}} 
\def\C{{\mathcal{C}}}
\def\cl{{\mathfrak{Cl}}}
\newcommand{\uset}[3][0ex]{%
  \mathrel{\mathop{#3}\limits_{
    \vbox to#1{\kern-7\ex@
    \hbox{$\scriptstyle#2$}\vss}}}}
\newcommand{\wt}[1]{\widetilde{#1}} 
\newcommand{\wh}[1]{\widehat{#1}}
\newcommand{\ws}[1][1.5]{
  \mathrel{\scalebox{#1}[1]{$\sim$}}
}
\newcommand{\iso}[1]{\ws_{#1}}
\def\blfootnote{\gdef\@thefnmark{}\@footnotetext}
\title{
{\huge\bf Character Triple Conjecture for $p$-Solvable Groups}\\
\author{\Large Damiano Rossi}
\date{}
\blfootnote{\emph{$2010$ Mathematical Subject Classification:} $20$C$20$ ($20$C$25$).
\\
\emph{Key words and phrases:} Character Triple Conjecture, Dade's Projective Conjecture, $p$-solvable groups.
\\
This paper has been written during the author's PhD at the Bergische Universit\"at Wuppertal funded by the research training group \textit{GRK2240: Algebro-geometric Methods in Algebra, Arithmetic and Topology} of the DFG. The author would like to thank his advisor Britta Sp\"ath for a careful reading of the manuscript.
}
}
\begin{document}

\renewcommand{\thetheoA}{\Alph{theoA}}

\renewcommand{\thecorA}{\Alph{corA}}

\selectlanguage{english}

\maketitle

\begin{center}
\emph{To the memory of Carlo Casolo}
\end{center}

\begin{abstract}
In this paper, we prove the Character Triple Conjecture for $p$-solvable groups. This is a conjecture proposed by Sp\"ath during the reduction process of Dade's Projective Conjecture to quasisimple groups (see \cite{Spa17}). In addition,  as suggested by Isaacs and Navarro in \cite{Isa-Nav02}, we take into account the $p$-residue of characters.
\end{abstract}

\section{Introduction}

Global-local counting conjectures play a major role in modern representation theory of finite groups. Amongst them are the McKay Conjecture \cite{McK72} and its blockwise version, known as the Alperin-McKay Conjecture \cite{Alp76}, the Alperin Weight Conjecture \cite{Alp87} and a series of conjectures proposed by Dade in \cite{Dad92}, \cite{Dad94} and \cite{Dad97} that imply all the above mentioned conjectures. Dade's aim was to find a version of his conjecture strong enough to hold for every finite group if proved for all nonabelian simple groups. Unfortunately, such a reduction theorem has never been published. The first step towards the solution of the global-local conjectures has been achieved by Isaacs, Malle and Navarro in \cite{Isa-Mal-Nav07} where the McKay Conjecture was reduced to a stronger statement for simple groups. Inspired by this result, other reduction theorems have been proved (see \cite{Nav-Tie11}, \cite{Spa13II}, \cite{Spa13I}, \cite{Spa17} and \cite{Nav-Spa-Val}). However, contrary to Dade's philosophy, all the reduction theorems appeared so far reduce a certain statement for arbitrary finite groups to a much stronger statement for quasisimple groups. 

Although these stronger statements, known as inductive conditions, have been originally thought for (quasi)simple groups, they can be stated for arbitrary finite groups. Then, going back to Dade's plan, by proving the inductive condition for simple groups it should be possible to obtain, not only the original conjecture, but even the inductive condition itself for every finite group. This was done in \cite{Nav-Spa14I} for the Alperin--McKay Conjecture. 

In \cite{Spa17} Sp\"ath introduced the Character Triple Conjecture and showed that Dade's Projective Conjecture holds for every finite group if her conjecture holds for all quasisimple groups. Therefore Sp\"ath's conjecture plays the role of inductive condition for Dade's Projective Conjecture. Following \cite{Nav-Spa14I}, we would like to show that the Character Triple Conjecture holds for every finite group if it holds for all quasisimple groups. To prove such a reduction theorem, it is necessary to study the structure of a minimal counterexample. As for the above mentioned reductions, the first step in this direction is to show that such a counterexample cannot be $p$-solvable. This result was in preparation when the paper \cite{Spa17} was published. Unfortunately, the project was later abandoned and no proof has appeared. With permission of Sp\"ath, we take over the project and prove the Character Triple Conjecture for $p$-solvable groups in this paper.

More precisely, let $G$ be a finite group and fix a prime $p$. For any $d\geq 0$, we denote by $\irr^d(G)$ the set of irreducible characters $\chi\in\irr(G)$ with $p$-defect equal to $d$. Let $\mathfrak{N}(G)$ be the set of $p$-chains of $G$ starting with $\O_p(G)$. If $\d=\{D_0<D_1<\dots<D_n\}\in\mathfrak{N}(G)$, we denote by $|\d|$ the integer $n$, called the length of $\d$. This yields a partition of $\mathfrak{N}(G)$ into the set $\mathfrak{N}(G)_+$ of $p$-chains of even length and the set $\mathfrak{N}(G)_-$ of $p$-chains of odd length. Notice that $G$ acts by conjugation on the set of $p$-chains and let $G_\d$ be the stabilizer in $G$ of the chain $\d$. For $\epsilon\in\{+,-\}$ and $B$ a $p$-block of $G$, define
$$\C^d(B)_\epsilon:=\left\lbrace(\d,\vartheta)\enspace\middle|\enspace \d\in\mathfrak{N}(G)_\epsilon,\vartheta\in\irr^d(G_\d),\bl(\vartheta)^G=B\right\rbrace,$$
where $\bl(\vartheta)$ is the unique block of $G_\d$ containing $\vartheta$ and $\bl(\vartheta)^G$ is the block of $G$ obtain via Brauer's induction (this is defined by \cite[Lemma 3.2]{Kno-Rob89}). We denote by $\overline{(\d,\vartheta)}$ the $G$-orbit of $(\d,\vartheta)\in\C^d(B)_\epsilon$ and by $\C^d(B)_\epsilon/G$ the set of $G$-orbits. Now, our main result can be stated as follows.

\begin{theoA}
\label{thm:Main theorem}
Let $G$ be a finite $p$-solvable group with $\O_p(G)\leq \z(G)$ and consider a $p$-block $B$ of $G$ with noncentral defect groups. Suppose that $G\unlhd A$ and denote by $A_B$ the stabilizer of $B$ in $A$. Then, for every $d\geq 0$, there exists an $A_B$-equivariant bijection
$$\Omega:\C^d(B)_+/G \to \C^d(B)_-/G$$
such that
$$\left(A_{\d,\vartheta},G_\d,\vartheta\right)\iso{G}\left(A_{\e,\chi},G_{\e},\chi\right),$$
in the sense of Definition \ref{def:N-block isomorphic character triples}, for every $(\d,\vartheta)\in\C^d(B)_+$ and $(\e,\chi)\in\Omega(\overline{(\d,\vartheta)})$. 
\end{theoA}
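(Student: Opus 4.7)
The natural plan is to proceed by induction on $|G|$. Since $G$ is $p$-solvable with $\O_p(G)\leq\z(G)$, the Hall--Higman lemma gives $\c_G(\O_p(G))\leq\O_p(G)$ whenever $\O_{p'}(G)=1$; combined with the centrality assumption this would force $G=\c_G(\O_p(G))=\O_p(G)=\z(G)$, so $G$ would be an abelian $p$-group, forcing every block of $G$ to have defect contained in $\z(G)$, which contradicts the hypothesis on $B$. We may therefore assume $N:=\O_{p'}(G)\neq 1$. As $N$ is a $p'$-group, every block of $N$ has defect zero and is a single irreducible character $\theta\in\irr(N)$, which is permuted by $A$, and by standard Clifford theory for blocks each block of $G$ covered by $B$ is parametrised by the $G$-orbit of some such $\theta$.

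From here I would split into two cases according to the inertia subgroup $T:=G_\theta$. If $T<G$, the Fong--Reynolds correspondence yields a block $B'$ of $T$ covering $\theta$ with $(B')^G=B$ and equal defect groups, together with an $A_B$-equivariant, height-preserving bijection $\irr^d(G|B)\leftrightarrow\irr^d(T|B')$. The same mechanism extends chain by chain, producing $A_B$-equivariant bijections $\C^d(B)_\pm/G\leftrightarrow\C^d(B')_\pm/T$; since Fong--Reynolds is well-known to be compatible with the relation $\iso{G}$ between character triples, applying the inductive hypothesis to $(T,A_\theta,B')$ transfers back to the desired bijection for $(G,A,B)$. If instead $T=G$, I would invoke the standard character triple isomorphism replacing $(G,N,\theta)$ by $(\widehat{G},\widehat{N},\widehat{\theta})$ with $\widehat{\theta}$ linear and $\widehat{N}$ central cyclic. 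Since $N\neq 1$, the quotient $\widehat{G}/\ker(\widehat{\theta})$ is strictly smaller than $G$, is again $p$-solvable with $\O_p\leq\z$, and its chain and character data over $B$ correspond $A$-equivariantly to that of $G$; induction then closes this case.

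The hard part, I expect, is not the combinatorial bijection itself but the bookkeeping needed to preserve simultaneously the relation $\iso{G}$ of character triples and the full $A_B$-equivariance throughout these reductions. Both Fong--Reynolds and the central extension step pass to a different ambient overgroup (respectively a preimage of $A_\theta$, or an extension $\widehat{A}$ acting on $\widehat{G}$), and one must verify that chain stabilisers, Brauer's block induction, and the explicit character-triple isomorphisms fit into a single compatible diagram at every level of each chain $\d$. A secondary technical point is the normalisation $D_0=\O_p(G)$: after passage to $T$ or to $\widehat{G}/\ker(\widehat{\theta})$ the $p$-radical changes, so any chain whose first term does not match the new $\O_p$ must be cancelled against a neighbour of opposite parity via a length-shifting involution of the form $\d\mapsto\d\cup\{\O_p(G_\d)\}$, in the spirit of the Kn\"orr--Robinson cancellation used in Dade's original conjecture. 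Showing that this involution is itself $A_B$-equivariant and respects $\iso{G}$ on the nose is where I expect the bulk of the technical work to sit.
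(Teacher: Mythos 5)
Your overall strategy matches the paper's: reduce first via the Fong--Reynolds correspondence when the relevant block $\bl(\mu)$ of $\O_{p'}(G)$ is not stable, then via the Fong (Glauberman--Fong) reduction when it is, and close with the $p$-solvable Hall--Higman argument. The base case reasoning (when $\O_{p'}(G)=1$) and the recognition of the two cases are both sound, and you correctly flag that compatibility of these correspondences with $\iso{G}$ and full equivariance is where the real work lies (the paper devotes Sections 3 and 4, culminating in Theorem \ref{thm:Really above the Glauberman correspondence with block relation with chains}, Theorem \ref{thm:Fong correspondence and character triple relations}, and Corollary \ref{cor:CTC for p-solvable in the stable case}, to exactly this).

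There are, however, two genuine gaps beyond the acknowledged bookkeeping. First, your inductive parameter $|G|$ does not work for the stable case. The Fong reduction replaces $G$ by a central extension $\wt{G}$ of $G/\O_{p'}(G)$ by a cyclic $p'$-group $S$, and $|\wt{G}| = |G|\cdot|S|/|\O_{p'}(G)|$ need not be smaller than $|G|$; your claim that ``$\widehat{G}/\ker(\widehat{\theta})$ is strictly smaller than $G$'' is not justified and is false in general. The paper instead inducts on $|G:\z(G)|$ first, using that $\wt{N}$ is central in $\wt{G}$, so $|\wt{G}:\z(\wt{G})| \leq |G:N\z(G)| < |G:\z(G)|$ whenever $N\nleq\z(G)$. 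Second, your case split on $T:=G_\theta$ (i.e.\ $G$-stability of $\theta$) is incomplete: when $\theta$ is $G$-invariant but not $A$-invariant you are sent to the stable case, but the Fong construction requires an $A$-invariant $\mu$ to build the covering extension $\wh{A}$ compatibly with the $A$-action. The paper's Proposition \ref{prop:Minimal counterexample structure, unstable} handles exactly this by using $|A|$ as a secondary inductive parameter: if $\mu$ is $G$-invariant but not $A$-invariant, Fong--Reynolds does not shrink $|G:\z(G)|$, but it does shrink $|A|$. A further, minor, structural difference: the paper proves the flexible Conjecture \ref{conj:Character Triple Conjecture} (arbitrary central $Z$) and uses Lemma \ref{lem:Knorr-Robinson for Character Triple Conjecture} once at the outset to assume $Z=\O_p(G)$; working directly with Theorem A forces you into the repeated Kn\"orr--Robinson cancellations you describe, which the paper avoids by the choice of statement to induct on.
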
 

Recall that for $\chi\in\irr(G)$, the $p$-residue of $\chi$ is the nonnegative integer $r(\chi):=|G|_{p'}/\chi(1)_{p'}$. Following ideas of Isaacs and Navarro \cite{Isa-Nav02}, we include the $p$-residue of characters into the picture. However, we do not consider more technical refinements involving Galois automorphisms (see \cite{Nav04}) or $p$-local Schur indices (see \cite{Tur08II}) as done by Turull in \cite{Tur17} for Dade's Conjecture.

\begin{theoA}
\label{thm:Main theorem residues}
There exists a bijection $\Omega$ satisfying the conditions of Theorem \ref{thm:Main theorem} and such that 
$$r(\vartheta)\equiv \pm r(\chi)\pmod{p},$$
for every $(\d,\vartheta)\in\C^d(B)_+$ and some $(\e,\chi)\in\Omega(\overline{(\d,\vartheta)})$.
\end{theoA}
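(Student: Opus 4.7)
The plan is to refine the bijection $\Omega$ produced in Theorem~\ref{thm:Main theorem} by carrying along in its inductive construction a bookkeeping of the $p$-residue modulo $p$ up to sign. Since Theorem~\ref{thm:Main theorem} is proved by induction on $|G|$ through a succession of Clifford-theoretic reductions together with a base case that exploits the hypotheses $\O_p(G)\leq\z(G)$ and the $p$-solvability of $G$, the approach is to revisit each of those reductions and verify that the residue behaviour is controlled throughout.

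The main arithmetic input is that the Clifford correspondence preserves $p$-residues exactly. Indeed, if $N\unlhd H$, $\theta\in\irr(N)$ and $\psi\in\irr(H_\theta\mid\theta)$ induces to $\chi=\psi^H$, then $\chi(1)=\psi(1)[H:H_\theta]$ together with $|H|_{p'}=|H_\theta|_{p'}[H:H_\theta]_{p'}$ yields $r(\chi)=r(\psi)$. Hence every reduction in the proof of Theorem~\ref{thm:Main theorem} that amounts to a Clifford correspondence transfers the residue condition unchanged to smaller groups, and the induction hypothesis immediately delivers the congruence on the smaller data. For the reductions that proceed instead via character extensions, through Gallagher's theorem or its projective analogue, the analysis of Isaacs and Navarro in \cite{Isa-Nav02} shows that $p$-residues transform by a scalar that is $\pm 1$ modulo $p$, so the required congruence is again preserved.

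In the base case one pairs chains $\d\in\mathfrak{N}(G)_+$ with chains in $\mathfrak{N}(G)_-$ obtained by appending or removing a single $p$-subgroup, and the matching of characters uses Fong--Reynolds together with the Glauberman--Isaacs correspondence for the coprime action of a $p$-group on a $p'$-quotient; the latter preserves $p$-residues modulo $p$ up to sign by \cite{Isa-Nav02}, giving the congruence on the nose. The main technical obstacle is the bookkeeping of signs across the induction: each character-extension or Glauberman-type step may contribute its own sign, and one has to ensure that they combine so as to yield the single congruence $r(\vartheta)\equiv\pm r(\chi)\pmod p$, rather than a weaker relation with an uncontrolled $p'$-scalar. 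I expect this to require tracking cocycle data at each projective extension step in parallel with the proof of Theorem~\ref{thm:Main theorem}, verifying at each stage that the associated scalar, which is a root of unity of $p'$-order, reduces to $\pm 1$ modulo a prime of $\z[\zeta]$ above $p$.
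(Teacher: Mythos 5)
Your overall strategy — retrace the minimal-counterexample reduction that proves Theorem~\ref{thm:Main theorem} and verify at each step that $p$-residues are controlled modulo $p$ up to sign — is exactly what the paper does: Theorem~\ref{thm:Minimal counterexample structure with residues} is obtained by rerunning Proposition~\ref{prop:Minimal counterexample structure, unstable} and Theorem~\ref{thm:Minimal counterexample structure} with residue bookkeeping. Your two key observations (Clifford correspondence, and hence Fong--Reynolds, preserves $p$-residues on the nose; the Glauberman-type step preserves them up to $\pm 1$ modulo $p$) are the correct arithmetic inputs.

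Two of your anticipated difficulties, however, are not real obstacles, and the genuine quantitative ingredient is slightly different from what you cite. First, the worry about signs compounding across the reduction is unfounded: there are only two reduction steps in the minimal-counterexample argument (Fong--Reynolds to $G_\mu$, then Fong plus Glauberman to $\wt G$), the conclusion asked for is only a congruence up to $\pm 1$, and a bounded composition of $\pm 1$'s is still $\pm 1$, so no careful sign accounting is needed. Second, there is no cocycle or root-of-unity subtlety at all: $p$-residues are rational $p'$-integers determined solely by degrees, so the projective-extension scalars you propose to track never enter. The place where real work is needed is the residue compatibility of the Fong-and-Glauberman bijection $\Gamma_{\mu,\d}$ of Corollary~\ref{cor:Fong correspondence and Glauberman bijection}; this is isolated as Lemma~\ref{lem:Residues}, which gives the precise congruence $r(\Gamma_{\mu,\d}(\vartheta))\,|N|\equiv\pm\,\mu(1)\,r(\vartheta)\,|\wt N|\pmod p$ and is extracted from Glesser's computations in \cite{Gle07}, rather than from \cite{Isa-Nav02} directly. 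With that lemma in hand, your outline carries through exactly.
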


As a corollary to our results, we show that Dade's Extended Projective Conjecture \cite[4.10]{Dad97}, with the Isaacs-Navarro refinement, holds for every $p$-solvable group.

\begin{corA}
\label{cor:Dade's Extended Projective Conjecture}
Dade's Extended Projective Conjecture with the Isaacs-Navarro refinement holds for every $p$-solvable group.
\end{corA}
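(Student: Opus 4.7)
The plan is to derive Corollary~\ref{cor:Dade's Extended Projective Conjecture} from Theorem~\ref{thm:Main theorem residues} by unwinding definitions, closely following the implication ``Character Triple Conjecture $\Rightarrow$ Dade's Projective Conjecture'' established by Sp\"ath in \cite{Spa17} and tracking $p$-residues throughout.

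Fix a $p$-solvable group $G$ and a $p$-block $B$ of $G$. Since Dade's Extended Projective Conjecture is formulated for blocks with noncentral defect groups (see \cite[4.10]{Dad97}), we may assume that $B$ has noncentral defect groups. First I would reduce to the case $\O_p(G)\leq\z(G)$; this is a routine reduction for $p$-solvable groups that preserves all the relevant data --- chains in $\mathfrak{N}(G)$, character triples above $\O_p(G)$, block induction, defect groups, and $p$-residues. Embedding $G$ as a normal subgroup in the external group appearing in the statement of Dade's Extended Projective Conjecture then puts us in the hypotheses of Theorem~\ref{thm:Main theorem residues}.

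The output of Theorem~\ref{thm:Main theorem residues} contains all the data required by Dade's Extended Projective Conjecture with the Isaacs-Navarro refinement: the $A_B$-equivariant bijection $\Omega$ yields the equality $|\C^d(B)_+/G|=|\C^d(B)_-/G|$ compatibly with the outer action (the ``Extended'' part); the $N$-block isomorphism $\iso{G}$ of character triples from Definition~\ref{def:N-block isomorphic character triples} implies equality of central characters on $\z(G)$, preservation of block induction and defect, and projective equivalence (these are precisely the properties that motivated Sp\"ath's formulation of the Character Triple Conjecture in \cite{Spa17}); and the congruence $r(\vartheta)\equiv\pm r(\chi)\pmod{p}$ is exactly the Isaacs-Navarro refinement.

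I expect the main obstacle to be the bookkeeping required to translate the strong notion of character triple isomorphism furnished by Theorem~\ref{thm:Main theorem residues} into the more elementary data appearing in Dade's formulation \cite[4.10]{Dad97}: central characters on $\z(G)$, projective representations, and block induction. This translation is essentially carried out in \cite{Spa17}; the new ingredient here is a direct verification that the $p$-residue $r(\chi)=|G|_{p'}/\chi(1)_{p'}$ is preserved under the elementary operations entering the definition of $\iso{G}$, which then yields the Isaacs-Navarro refinement.
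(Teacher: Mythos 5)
Your approach is essentially the same as the paper's, which disposes of the corollary in a single line by citing Theorem~\ref{thm:Main theorem residues} together with \cite[Proposition 6.4]{Spa17} --- the latter being precisely the implication from the Character Triple Conjecture (with the residue congruence) to Dade's Extended Projective Conjecture (with the Isaacs--Navarro refinement) that you sketch. The translation you anticipate as the ``main obstacle,'' including the compatibility of $p$-residues with the operations entering the relation $\iso{G}$, is already contained in that cited proposition, so no additional verification is needed once the precise reference is in hand.
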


\begin{proof}
This follows from Theorem \ref{thm:Main theorem residues} and \cite[Proposition 6.4]{Spa17}.
\end{proof}

The paper is structured as follows: in the next section, we establish some notation, we give the main definitions used in the paper and we prove some preliminary results. In the third section, we construct an equivariant defect preserving character bijection lying above the Glauberman correspondence that is well behaved with respect to $N$-block isomorphic character triples (see Definition \ref{def:N-block isomorphic character triples}). In the fourth section, we show how to construct $N$-block isomorphic character triples by using the Fong correspondence. Finally in the last section, we use the previously obtained results to prove Theorem \ref{thm:Main theorem} and Theorem \ref{thm:Main theorem residues}. This is done by inspecting the structure of a minimal counterexample.

\section{Preliminaries and notation}

We use standard notation from representation theory of finite groups as in \cite{Isa76}, \cite{Nag-Tsu89} and \cite{Nav98}. All groups considered in the sequel are assumed to be finite. For notational convenience, whenever necessary we denote the normalizer $\n_G(H)$ simply by $G_H$, for every $H\leq G$. 

Let $\irr(G)$ the set of ordinary irreducible characters. If $H\leq G$ and $\chi\in\irr(G)$, then $\irr(\chi_H)$ is the set of irreducible characters of $H$ which appear as constituents of the restricted character $\chi_H$. Moreover, for $\vartheta\in\irr(H)$, we denote by $\irr(G\mid \vartheta)$ the set of irreducible constituents of the induced character $\vartheta^G$. For $N\unlhd G$ and $\vartheta\in\irr(N)$ we denote by $G_\vartheta$ the stabilizer of $\vartheta$ in $G$ and by $\irr_G(N)$ the set of $G$-invariant irreducible characters of $N$. Then $(G,N,\vartheta)$ is a character triple if $N\unlhd G$ and $\vartheta\in\irr_G(N)$.

Fix a prime $p$. For $\chi\in\irr(G)$, there exist unique nonnegative integers $d(\chi)$ and $r(\chi)$, called respectively the $p$-defect and the $p$-residue of $\chi$, such that $r(\chi)p^{d(\chi)}=|G|/\chi(1)$ with $r(\chi)$ coprime to $p$. For any $d\geq 0$, we denote by $\irr^d(G)$ the set of irreducible characters $\chi\in\irr(G)$ such that $d(\chi)=d$. We denote by $\Bl(G)$ the set of $p$-blocks of $G$ and by $\Bl(G\mid b)$ the set of all blocks of $G$ covering $b$, where $N\unlhd G$ and $b\in\Bl(N)$. Let $G_b$ be the stabilizer of $b$ in $G$. If $H\leq G$ and $b\in\Bl(H)$, then $b^G$ is, when it is defined, the block obtained via Brauer induction. For $\chi\in\irr(G)$, the block of $G$ that contains $\chi$ is $\bl(\chi)$. Let $\delta(B)$ be the set of defect groups of the block $B$ and $d(B)$ be its defect. If $D$ is a $p$-subgroup of $G$, then $\Bl(G\mid D)$ is the set of blocks of $G$ with $D\in\delta(B)$.

For the notion of projective representation, we refer to \cite{Nag-Tsu89}, \cite{Nav18} and \cite{Spa18}. We denote by $\proj(G\mid \alpha)$ the set of projective representations of $G$ with factor set $\alpha$. Moreover, if $N\unlhd G$, then we can consider a representation of $G/N$ as a representation of $G$ that is constant on $N$-cosets by the usual inflation process. If $\mathcal{P}$ is a projective representation associated with a character triple $(G,N,\vartheta)$ (see \cite[Definition 1.7]{Spa18} and \cite[Definition 5.2]{Nav18}), then $\mathcal{P}$ yields a central extension of $G$ (see \cite[Theorem 5.6]{Nav18} and \cite[Theorem 1.12]{Spa18}). This is a standard construction and we will make use of it without further comment.

In \cite[Definition 6.3]{Spa17} a new equivalence relation on character triples was introduced. This will be of fundamental importance in what follows and, for completeness, we include the definition.

\begin{defin}[$N$-block isomorphic character triples]
\label{def:N-block isomorphic character triples}
Let $(H_1,M_1,\vartheta_1)$ and $(H_2,M_2,\vartheta_2)$ be two character triples and let $N$ be a group. We say that the two character triples are $N$-block isomorphic, and write
$$\left(H_1,M_1,\vartheta_1\right)\iso{N}\left(H_2,M_2,\vartheta_2\right),$$
if the following conditions are satisfied:
\begin{enumerate}
\item $N\unlhd NH_1=NH_2=:G$, $M_1=H_1\cap N$ and $M_2=H_2\cap N$. We denote the canonical isomorphisms by $l_i:H_i/M_i\to G/N$ and by $i:=l_2^{-1}\circ l_1:H_1/M_1\to H_2/M_2$;
\item For $i=1,2$, there exists a defect group $D_i\in\delta(\bl(\vartheta_i))$ such that $\c_G(D_i)\leq H_i$. In particular $\c_G(N)\leq H_1\cap H_2$;
\item For $i=1,2$, there exists a projective representation $\mathcal{P}_i\in\proj(H_i\mid \alpha_i)$ associated with $(H_i,M_i,\vartheta_i)$ such that $\alpha_1(x,y)=\alpha_2(i(x),i(y))$, for every $x,y\in H_1/M_1$, and with the property that $\mathcal{P}_{1,\c_G(N)}$ and $\mathcal{P}_{2,\c_G(N)}$ are associated with the same scalar function (see the comments preceding \cite[Definition 2.7]{Spa18} and \cite[Definition 10.14]{Nav18}). In this case, there exists a strong isomorphism of character triple $(i,\sigma):(H_1,M_1,\vartheta_1)\to (H_2,M_2,\vartheta_2)$ (see \cite[Problem 11.13]{Isa76}) given by
\begin{align*}
\sigma_{J_1}:\irr(J_1\mid \vartheta_1)&\to \irr(J_2\mid \vartheta_2)
\\
\tr(\mathcal{Q}_{J_1}\otimes \mathcal{P}_{1,J_1})&\mapsto \tr(\mathcal{Q}_{J_2}\otimes \mathcal{P}_{2,J_2}),
\end{align*}
for every $N\leq J\leq G$, where $J_i:=J\cap H_i$ and $\mathcal{Q}\in\proj(J/N\mid \alpha^{-1}_{J\times J})$. Here $\alpha$ is the factor set of $G/N$ corresponding to $\alpha_i$ via the isomorphism $l_i:H_i/M_i\to G/N$ (see \cite[Theorem 3.3]{Spa17});
\item For $N\leq J\leq G$, we have
$$\bl(\psi)^J=\bl(\sigma_{J_1}(\psi))^J$$
for every $\psi\in\irr(J_1\mid \vartheta_1)$. Observe in this situation that block induction is well defined (see \cite[Lemma 3.5]{Spa17}).
\end{enumerate}
In this situation, we say that $(\mathcal{P}_1,\mathcal{P}_2)$ is associated with $(H_1,M_1,\vartheta_1)\iso{N}(H_2,M_2,\vartheta_2)$, or that $(H_1,M_1,\vartheta_1)\iso{N}(H_2,M_2,\vartheta_2)$ is given by $(\mathcal{P}_1,\mathcal{P}_2)$. Whenever we want to specify the pair $(\mathcal{P}_1,\mathcal{P}_2)$ we write $\sigma^{(\mathcal{P}_1,\mathcal{P}_2)}_{J_1}$ instead of simply $\sigma_{J_1}$. 
\end{defin}

The above definition gives a relation on character triples that extends the relation $\geq_b$ introduced in \cite[Definition 4.2]{Spa18} (see also \cite{Nav-Spa14I}). In fact, observe that $(G,N,\chi)\geq_b (H,M,\vartheta)$ if and only if $(G,N,\chi)\iso{N}(H,M,\vartheta)$. On the other hand, the notion of block isomorphism of character triples given in \cite[Definition 3.6]{Nav-Spa14I} is slightly different, and in some sense more restrictive, from the relation $\geq_b$ (see \cite[Remark 4.3 (c)]{Spa18}). For this reason, we will not refer to results of \cite{Nav-Spa14I} which involve directly block isomorphism of character triples.

Having defined $N$-block isomorphic character triples, we can now introduce Sp\"ath's Character Triple Conjecture (\cite[Conjecture 6.3]{Spa17}). For a central $p$-subgroup $Z$ of $G$, we denote by $\mathfrak{N}(G,Z)$ the set of normal $p$-chains of $G$ starting with $Z$. These are the chains $\d=\{Z=D_0<D_1<\dots<D_n\}$ of $p$-subgroups of $G$ with the property that each $D_i$ is normal in the largest subgroup $D_n$. We denote by $|\d|$ the integer $n$, called the length of $\d$. The set $\mathfrak{N}(G,Z)$ is partitioned into the set $\mathfrak{N}(G,Z)_+$ of $p$-chains of even length, and the set $\mathfrak{N}(G,Z)_-$ of $p$-chains of odd length. The group $G$ acts by conjugation on $\mathfrak{N}(G,Z)$ and we denote by $G_\d=\bigcap_i\n_G(D_i)$ the stabilizer in $G$ of the chain $\d$. Finally, let $B$ be a block of $G$ and, for $\epsilon\in\{+,-\}$ and $d\geq 0$, define $\C^d(B,Z)_\epsilon$ to be the set of pairs $(\d,\vartheta)$ with $\d\in\mathfrak{N}(G,Z)_\epsilon$ and $\vartheta\in\irr^d(G_\d)$ satisfying $\bl(\vartheta)^G=B$. Again, the group $G$ acts on $\C^d(B,Z)_\epsilon$. We denote by $\overline{(\d,\vartheta)}$ the $G$-orbit of $(\d,\vartheta)\in\C^d(B,Z)_\epsilon$ and by $\C^d(B,Z)_\epsilon/G$ the set of $G$-orbits. 

\begin{conj}[Sp\"ath's Character Triple Conjecture]
\label{conj:Character Triple Conjecture}
Let $G$ be a finite group, $Z\leq \z(G)$ be a $p$-subgroup and consider $B\in\Bl(G)$ with defect groups strictly larger than $Z$. Suppose that $G\unlhd A$. Then, for every $d\geq 0$, there exists an $\n_A(Z)_B$-equivariant bijection
$$\Omega:\C^d(B,Z)_+/G \to \C^d(B,Z)_-/G$$
such that 
$$\left(A_{\d,\vartheta},G_\d,\vartheta\right)\iso{G}\left(A_{\e,\chi},G_{\e},\chi\right),$$
for every $(\d,\vartheta)\in\C^d(B,Z)_+$ and some $(\e,\chi)\in\Omega(\overline{(\d,\vartheta)})$.
\end{conj}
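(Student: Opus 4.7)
The plan is to proceed by induction on $|G|$, reducing a minimal counterexample to a statement about quasisimple groups that can be verified family-by-family via the Classification of Finite Simple Groups (CFSG), following the template of the reductions of the McKay, Alperin--McKay, and Alperin Weight Conjectures (cf.\ \cite{Isa-Mal-Nav07}, \cite{Nav-Tie11}, \cite{Nav-Spa14I}). Fix a hypothetical minimal counterexample $(G,A,B,Z,d)$; the goal is to force $G$ into a very rigid shape and then derive a contradiction.

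The first round of reductions is blockwise Clifford-theoretic and makes no use of $p$-solvability. After replacing $Z$ by $\O_p(G)$ (which is permissible after reindexing chains in $\mathfrak{N}(G,Z)$), one may assume $Z=\O_p(G)$. Next, for every $N\unlhd G$ and every block $b$ of $N$ covered by $B$ with proper inertial subgroup $G_b\lneq G$, the Fong--Reynolds correspondence together with Section~4 of the paper (which transports $N$-block isomorphism of character triples in the sense of Definition~\ref{def:N-block isomorphic character triples}) lifts the conjecture inductively from $G_b$ to $G$. One may therefore assume $B$ is quasi-primitive: every block of every normal subgroup covered by $B$ is $G$-invariant, which in particular pins down the central character of $B$ on every normal subgroup.

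The structural step examines a minimal normal subgroup $L/Z$ of $G/Z$, and three cases arise. If $L/Z$ is an elementary abelian $p$-group, then $L\leq \O_p(G)=Z$, a contradiction. If $L/Z$ is an elementary abelian $p'$-group, then the Glauberman correspondence, made equivariant and block-compatible as in Section~3 of the paper, pairs $\C^d(B,Z)_+/G$ with $\C^d(B,Z)_-/G$ by collapsing the $L$-layer and descending to a smaller group; this is essentially the content of Theorem~A (the $p$-solvable case). If $L/Z$ is a direct product of isomorphic nonabelian simple groups, then each chain $\d\in\mathfrak{N}(G,Z)$ should be decomposed into its $L$-part and a quotient chain modulo $L$, and Clifford theory together with an inductive condition on the quasisimple cover of the composition factor should yield the bijection $\Omega$ while preserving $N$-block isomorphism of triples, the defect, and the $p$-residue modulo $p$.

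The main obstacle is the third case. One must first formulate an inductive condition on quasisimple groups strong enough to absorb the $A_B$-equivariance, the block-induction compatibility required by Definition~\ref{def:N-block isomorphic character triples}, the defect equality, and the $p$-residue control modulo $p$; then one must verify it for every nonabelian finite simple group via CFSG --- alternating and sporadic groups by direct character-theoretic calculation, Lie-type groups through Deligne--Lusztig theory and the Bonnaf\'e--Dat--Rouquier Morita equivalences that govern $\ell$-blocks in non-defining characteristic. This case-by-case verification is an open program of substantial scope; the present paper should be viewed as the essential first step, clearing the $p$-solvable obstruction so that any subsequent minimal-counterexample analysis may assume $G/Z$ has a nonabelian simple composition factor on which the inductive condition will ultimately apply.
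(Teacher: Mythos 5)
The statement you were asked to prove is Sp\"ath's Character Triple Conjecture for arbitrary finite groups, and it is stated in the paper as a \emph{conjecture}: the paper does not prove it, and it remains open. What the paper actually proves is Theorem~\ref{thm:Main theorem}, the $p$-solvable case. Your proposal is therefore not a proof of the statement but an outline of a research program, and you concede the decisive point yourself: in the case where a minimal normal subgroup of $G/Z$ is a product of nonabelian simple groups, you appeal to an inductive condition on quasisimple groups that has neither been formulated in the precise form needed (no reduction theorem of the shape ``CTC for all quasisimple groups implies CTC for all finite groups'' has been published --- the paper's introduction says exactly this) nor verified via CFSG. Calling that case ``an open program of substantial scope'' is accurate, but it means the argument terminates before reaching a contradiction with the minimality of $(G,A,B,Z,d)$. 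There is no way to close this gap with the tools in the paper.

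Even on the portion that overlaps with what the paper does prove, your sketch compresses the real work. The paper's minimal counterexample is taken with respect to $|G:\z(G)|$ first and then $|A|$, not $|G|$; the Fong--Reynolds step (Proposition~\ref{prop:Minimal counterexample structure, unstable}) only forces the covered blocks of $N=\O_{p'}(G)$ to be $A$-invariant, not full quasi-primitivity over every normal subgroup; and ``collapsing the $L$-layer'' in the abelian $p'$-case is carried by the combination of Theorem~\ref{thm:Really above the Glauberman correspondence with block relation with chains}, Theorem~\ref{thm:Fong correspondence and character triple relations} and Corollary~\ref{cor:CTC for p-solvable in the stable case}, whose content --- equivariance, preservation of defect, compatibility with block induction, and transport of the relation $\iso{G}$ through both the Glauberman and Fong correspondences simultaneously --- is precisely what is hard and is not supplied by naming the correspondences. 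Finally, the paper's endgame for $p$-solvable $G$ is that $\O_p(G)\O_{p'}(G)\le\z(G)$ forces $G$ abelian, contradicting the hypothesis that $B$ has defect groups strictly larger than $Z$; your sketch never closes the loop in this way. In short: the statement is an open conjecture, and your proposal correctly identifies the intended strategy but does not constitute a proof.
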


By \cite[Lemma 3.8 (c)]{Spa17}, the above statement on character triples does not depend on the choice of $(\e,\chi)\in\Omega(\overline{(\d,\vartheta)})$ nor on the representative $(\d,\vartheta)$ of $\overline{(\d,\vartheta)}$. We will make use of this fact without further reference.

As shown in the following lemma, it is no loss of generality to assume $\O_p(G)\leq \z(G)$ and consider $p$-chains with initial term $\O_p(G)$. This is an adaptation of a well known result (see, for instance, \cite[Theorem 9.16]{Nav18}).

\begin{lem}
\label{lem:Knorr-Robinson for Character Triple Conjecture}
Conjecture \ref{conj:Character Triple Conjecture} holds whenever $Z<\O_p(G)$.
\end{lem}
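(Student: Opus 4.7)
The plan is to adapt the classical Kn\"orr--Robinson involution argument (compare \cite[Theorem 9.16]{Nav18}) to the setting of Conjecture \ref{conj:Character Triple Conjecture}, where it must additionally carry the refined $G$-block isomorphism of character triples. Write $Q:=\O_p(G)$, so that by hypothesis $Q>Z$ is a normal $p$-subgroup of $G$ strictly larger than the starting subgroup of every chain.

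First, I would construct a canonical fixed-point-free involution $\tau$ on the set $\C^d(B,Z):=\C^d(B,Z)_+\cup\C^d(B,Z)_-$ that interchanges the two halves, commutes with the action of $\n_A(Z)_B$, and acts trivially on the character component, namely $\tau(\d,\vartheta)=(\tau(\d),\vartheta)$ with $G_{\tau(\d)}=G_\d$. The involution is obtained from an insertion-or-removal procedure: given $\d=\{Z=D_0<D_1<\cdots<D_n\}$, one singles out a canonical $Q$-related normal subgroup $R$ of $D_n$, built by a combinatorial rule from $Q$ and the early terms of $\d$ (using that $Q\unlhd G$ forces $Q\cap D_n$ and similar constructions to be normal in $D_n$), and either inserts $R$ into $\d$ at the unique admissible position if $R$ is not yet a term of $\d$, or removes $R$ in the opposite case. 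Canonicity guarantees both $A$-equivariance and self-inverseness, and by design the length changes by exactly one, so that the parity is flipped and the pair $(\tau(\d),\vartheta)$ sits in $\C^d(B,Z)_{-\epsilon}$ whenever $(\d,\vartheta)$ sits in $\C^d(B,Z)_\epsilon$.

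Next, since $G_{\tau(\d)}=G_\d$ and $\vartheta$ is unchanged, Brauer induction automatically preserves the condition $\bl(\vartheta)^G=B$, so $\tau$ descends to $G$-orbits and produces the required $\n_A(Z)_B$-equivariant bijection
$$\Omega:\C^d(B,Z)_+/G\longrightarrow\C^d(B,Z)_-/G.$$
The $G$-block isomorphism
$$(A_{\d,\vartheta},G_\d,\vartheta)\iso{G}(A_{\tau(\d),\vartheta},G_{\tau(\d)},\vartheta)$$
of Definition \ref{def:N-block isomorphic character triples} holds trivially: take the identifying isomorphism $i=\mathrm{id}$ and the pair $(\mathcal{P},\mathcal{P})$ for any projective representation $\mathcal{P}$ associated with $(A_{\d,\vartheta},G_\d,\vartheta)$, since neither the ambient groups $A_{\d,\vartheta}=A_{\tau(\d),\vartheta}$ and $G_\d=G_{\tau(\d)}$, nor the character $\vartheta$, have been altered.

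The principal obstacle is the precise combinatorial definition of $\tau$. A case analysis on how the chain $\d$ meets $Q$---and in particular what to do when $Q$ is not comparable with some of the $D_i$---is needed in order to specify the auxiliary subgroup $R$ and verify that the insertion/removal recipe is well defined on every chain in $\C^d(B,Z)$, has order two, and changes parity. Once this combinatorial step is completed, all the block-theoretic and character-theoretic content of the lemma, including the stronger $\iso{G}$ relation and the $\n_A(Z)_B$-equivariance, follows without effort from the fact that $\tau$ never modifies the character-theoretic data.
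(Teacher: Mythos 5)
Your plan coincides with the paper's: construct an $\n_A(Z)$-equivariant, self-inverse, length-altering involution $\d\mapsto\d^*$ on $\mathfrak{N}(G,Z)$ with $G_\d=G_{\d^*}$ (whence also $A_{\d,\vartheta}=A_{\d^*,\vartheta}$ by equivariance), leave the character $\vartheta$ untouched, and observe that the required $G$-block isomorphism then holds because the two character triples are literally identical. You correctly identify that once such an involution is in hand, all the block-theoretic and character-triple content of the lemma is automatic. However, you stop precisely at the one step that carries the mathematical substance of the lemma---the explicit definition of the involution---and the hints you give do not point the right way: the auxiliary subgroup is not built from ``$Q\cap D_n$ and similar constructions'' out of ``the early terms of $\d$'', but from products $QD_i$.

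For comparison, here is the paper's construction, with $Q:=\O_p(G)$ (characteristic in $G\unlhd A$, so normal in $A$, which is what yields $\n_A(Z)$-equivariance). Given $\d=\{Z=D_0<D_1<\cdots<D_n\}$: if $Q\nleq D_n$, set $\d^*:=\d\cup\{QD_n\}$; otherwise let $k$ be minimal with $Q\leq D_k$, and put $\d^*:=\d\setminus\{D_k\}$ if $QD_{k-1}=D_k$, or $\d^*:=\d\cup\{QD_{k-1}\}$ if $QD_{k-1}<D_k$. One then checks that $(\d^*)^*=\d$, that $|\d^*|=|\d|\pm1$, that $\d^*$ is again a normal $p$-chain starting at $Z$, and that $G_\d=G_{\d^*}$ (the inserted or deleted term is determined by $Q\unlhd G$ together with the remaining terms of the chain). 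Supplying this construction and these verifications is exactly what your proposal leaves undone, and it is the whole of the proof.
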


\begin{proof}
Consider $\d\in\mathfrak{N}(G,Z)$ with $\d=\{D_0<D_1\dots<D_n\}$. If $\O_p(G)\nleq D_n$, then define $\d^*$ to be the $p$-chain obtained by adding $\O_p(G)D_n$ to $\d$. Assume $\O_p(G)\leq D_n$ and let $k$ be the unique nonnegative integer such that $\O_p(G)\leq D_k$ and $\O_p(G)\nleq D_{k-1}$. If $\O_p(G)D_{k-1}=D_k$, then we define $\d^*$ by deleting the term $D_k$ from $\d$. If $\O_p(G)D_{k-1}<D_k$, then we define $\d^*$ by adding the term $\O_p(G)D_{k-1}$ to $\d$. This defines a self-inverse $\n_A(Z)$-equivariant bijection $^*:\mathfrak{N}(G,Z)\to \mathfrak{N}(G,Z)$ such that $|\d|=|\d^*|\pm 1$. In particular $G_\d=G_{\d^*}$ and we define $\Omega(\overline{(\d,\vartheta)}):=\overline{(\d^*,\vartheta)}$, for every $(\d,\vartheta)\in\C^d(B,Z)_+$.
\end{proof}

\subsection{A consequence of the Harris--Kn\"orr theorem}

Next, we collect some consequences of the Harris--Kn\"orr theorem  that will be used in the sequel. The reader should notice that Corollary \ref{cor:Harris-Knorr} and Corollary \ref{cor:Harris-Knorr and p-chains} below can also be deduced from \cite[Theorem 4.1]{Tur17}. However, we present here an elementary argument.

\begin{lem}
\label{lem:Harris-Knorr}
Let $N\unlhd G$ and $P$ be a $p$-subgroup of $N$. Consider a block $b\in\Bl(N\mid P)$ and its Brauer first main correspondent $b'\in\Bl(\n_N(P)\mid P)$. Let $B'\in\Bl(\n_G(P))$ and set $B:=(B')^G$. Then $B'$ covers $b'$ if and only if $B$ covers $b$.
\end{lem}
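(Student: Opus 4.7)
The plan is to derive this from the Harris--Knörr theorem---which gives a bijection $\Bl(\n_G(P)\mid b')\to \Bl(G\mid b)$ via Brauer induction---together with Clifford theory for blocks. The ``only if'' direction is immediate: if $B'$ covers $b'$, then $B=(B')^G$ lies in $\Bl(G\mid b)$, so $B$ covers $b$.

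For the converse, assume $B$ covers $b$. By Clifford theory for blocks, $B'$ covers some $\tilde b'\in \Bl(\n_N(P))$, and the blocks of $\n_N(P)$ covered by $B'$ form a single $\n_G(P)$-orbit. Since $P\leq \O_p(\n_N(P))$, every defect group of $\tilde b'$ contains $P$, so $(\tilde b')^N$ is defined; denote it $\tilde b$. The standard compatibility of block covering with Brauer induction then yields that $B=(B')^G$ covers $\tilde b$. Hence $B$ covers both $b$ and $\tilde b$, and Clifford theory gives $\tilde b=b^g$ for some $g\in G$.

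The crux of the argument is then a defect group analysis. If $\tilde D$ denotes a defect group of $\tilde b'$, then $P\leq \tilde D$, and some $N$-conjugate of $\tilde D$ sits inside a defect group of $\tilde b=(\tilde b')^N$; but defect groups of $\tilde b=b^g$ are $G$-conjugate to $P$, so an order comparison forces $\tilde D=P$ and lets one replace $g$ by $gn^{-1}$ for a suitable $n\in N$ in order to arrange $g\in \n_G(P)$---this modification preserves $\tilde b=b^g$ because $N$ fixes its own blocks. With $g\in \n_G(P)$ in hand, functoriality of the Brauer correspondence under conjugation, combined with Brauer's first main theorem applied in $N$, identifies $\tilde b'=(b')^g$. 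Since the set of blocks of $\n_N(P)$ covered by $B'$ is $\n_G(P)$-invariant and contains $(b')^g$, it also contains $b'$, concluding the argument.

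The main obstacle is the defect group bookkeeping---pinning down $\tilde D=P$ and realizing the conjugation $\tilde b=b^g$ by an element of $\n_G(P)$---which relies on standard results about how defect groups behave under Brauer induction. Once these points are settled, the conclusion follows from the $\n_G(P)$-invariance of the set of blocks of $\n_N(P)$ covered by $B'$.
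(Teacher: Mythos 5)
The paper's own proof is a one-line citation to the \emph{proof} of the Harris--Kn\"orr theorem: the point is that the Harris--Kn\"orr argument (via Brauer homomorphisms on block idempotents) directly establishes the biconditional, not merely the bijection $\Bl(\n_G(P)\mid b')\to\Bl(G\mid b)$. You take a genuinely different route, attempting to deduce the lemma from the \emph{statement} of Harris--Kn\"orr together with Clifford theory for blocks and a defect-group analysis. Your ``only if'' direction is indeed immediate from well-definedness of the Harris--Kn\"orr map, as you say.

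The converse direction, however, has a genuine gap at its crucial step: you assert that ``standard compatibility of block covering with Brauer induction'' yields that $B=(B')^G$ covers $\tilde b=(\tilde b')^N$. There is no such general compatibility to invoke here. What you need is that $B'\in\Bl(\n_G(P))$ covering $\tilde b'\in\Bl(\n_N(P))$, where $\tilde b'$ a priori has a defect group $\tilde D$ possibly strictly larger than $P$, forces $(B')^G$ to cover $(\tilde b')^N$. When $\tilde D=P$ this is exactly a Harris--Kn\"orr statement for the pair $(\tilde b,\tilde b')$; but you only establish $\tilde D=P$ \emph{afterwards}, using this very covering fact as input, so the argument is circular. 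When $\tilde D>P$ the pair $(\tilde b,\tilde b')$ need not be Brauer correspondents, $\n_N(P)$ need not contain $\n_N(\tilde D)$, and there is no off-the-shelf covering--induction compatibility that applies (Brauer induction $\beta\mapsto\beta^G$ and $\beta_0\mapsto\beta_0^N$ are computed through different projections, and covering is not known to commute with both simultaneously in this generality). To repair the argument one would have to establish $\tilde D=P$ before invoking the compatibility, or to prove the compatibility itself---and either way one ends up re-deriving the content of the Harris--Kn\"orr proof, which is precisely what the paper's one-line proof cites.
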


\begin{proof}
The result follows from the proof of the Harris--Kn\"orr theorem \cite{Har-Kno85}.
\end{proof}

If $P$ is a $p$-group acting via automorphisms on a $p'$-group $N$, we denote by $f_P:\irr_P(N)\to\irr(\c_N(P))$ the $P$-Glauberman correspondence (see \cite[Chapter 13]{Isa76} and \cite[\S 2.3]{Nav18}).

\begin{cor}
\label{cor:Harris-Knorr}
Let $N$ be a normal $p'$-subgroup of $G$ and $P$ be a $p$-subgroup of $G$. Consider $\mu\in\irr_P(N)$ and set $\mu':=f_P(\mu)\in\irr(\c_N(P))$. If $B'\in\Bl(\n_G(P))$, then $B'$ covers $\bl(\mu')$ if and only if $(B')^{N\n_G(P)}$ covers $\bl(\mu)$. Moreover, if $\mu$ is $G$-invariant, then $B'$ covers $\bl(\mu')$ if and only if $(B')^{G}$ covers $\bl(\mu)$.
\end{cor}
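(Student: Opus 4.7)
The plan is to apply the Harris--Kn\"orr theorem (Lemma \ref{lem:Harris-Knorr}) to the groups $\widetilde{G}:=N\n_G(P)$ and $\widetilde{N}:=NP$. First I would verify the elementary setup: $\widetilde{N}\unlhd\widetilde{G}$ (since $\n_G(P)$ normalises both $N$ and $P$), $\n_{\widetilde{G}}(P)=\n_G(P)$, and $\n_{\widetilde{N}}(P)=\n_N(P)P$. Each identity is routine, obtained by writing elements of $\widetilde{G}$ in the form $ng$ with $n\in N$, $g\in\n_G(P)$, and using $N\cap P=1$.

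Since $N$ is a $p'$-group, $\bl(\mu)$ has trivial defect group; as $\mu$ is $P$-invariant and $NP/N\cong P$ is a $p$-group, a Fong--Reynolds argument together with the fact that a $p$-group has a unique $p$-block yields a unique block $b\in\Bl(NP)$ lying over $\bl(\mu)$, and inspecting the defect of a canonical $P$-invariant extension $\widehat{\mu}$ of $\mu$ to $NP$ shows $b\in\Bl(NP\mid P)$.

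The key step, which I expect to be the principal obstacle, is a compatibility between the Brauer and Glauberman correspondences: a block $e\in\Bl(NP\mid P)$ covers $\bl(\mu)$ if and only if its Brauer first main correspondent $d\in\Bl(\n_N(P)P\mid P)$ covers $\bl(\mu')$. To prove this I would compute the Brauer homomorphism $\Br_P\colon Z(\mathcal{O}[NP])^P\to Z(\mathcal{O}[\c_{NP}(P)])$ on the primitive central idempotent $e_b$ attached to $\widehat{\mu}$, and match the resulting central character with that attached to an analogous extension $\widehat{\mu'}$ of $\mu'$ to $\c_N(P)P$, using the defining property of $f_P$. Combined with the bijectivity of Brauer's first main theorem, this compatibility shows that the Brauer correspondent $b'$ of $b$ is the unique block in $\Bl(\n_N(P)P\mid P)$ that covers $\bl(\mu')$.

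With the compatibility and the two uniqueness statements in hand, Lemma \ref{lem:Harris-Knorr} applied to $(\widetilde{G},\widetilde{N},P)$ gives $B'$ covers $b'$ if and only if $(B')^{\widetilde{G}}$ covers $b$. Using transitivity of block covering along $N\unlhd NP$ and $\c_N(P)\unlhd\c_N(P)P\unlhd\n_N(P)P$, together with the uniqueness statements, this rewrites as $B'$ covers $\bl(\mu')$ if and only if $(B')^{N\n_G(P)}$ covers $\bl(\mu)$, which is the first assertion. For the final sentence, if $\mu$ is $G$-invariant then so is $\bl(\mu)$, and transitivity of Brauer induction $(B')^G=((B')^{N\n_G(P)})^G$ combined with a standard Clifford-theoretic argument for covering $G$-invariant blocks gives $(B')^G$ covers $\bl(\mu)$ if and only if $(B')^{N\n_G(P)}$ does, completing the proof.
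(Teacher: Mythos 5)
Your proposal is correct and takes essentially the same route as the paper's own proof: reduce to Lemma~\ref{lem:Harris-Knorr} applied with $NP\unlhd N\n_G(P)$, identify the unique blocks $b$ of $NP$ over $\bl(\mu)$ and $b'$ of $\n_{NP}(P)=\c_N(P)P$ over $\bl(\mu')$, and use that these are Brauer first main correspondents over $P$ together with transitivity of covering and of block induction. The only difference is cosmetic: the paper simply asserts that $b$ and $b'$ are Brauer correspondents (the ``Alperin argument,'' also invoked later in Lemma~\ref{lem:Canonical extension and values}), whereas you sketch a Brauer-homomorphism computation of this fact.
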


\begin{proof}
Let $b'$ be the unique block of $\n_{NP}(P)$ that covers $\bl(\mu')$, $b$ the unique block of $NP$ that covers $\bl(\mu)$ (see \cite[Corollary 9.6]{Nav98}) and notice that $b$ and $b'$ are Brauer first main correspondents over $P$. Now, ${\Bl(N\n_G(P)\mid \bl(\mu))}={\Bl(N\n_G(P)\mid b)}$ and ${\Bl(\n_G(P)\mid \bl(\mu'))}={\Bl(\n_G(P)\mid b')}$ and applying Lemma \ref{lem:Harris-Knorr} it follows that $B'$ covers $\bl(\mu')$ if and only if $(B')^{N\n_G(P)}$ covers $\bl(\mu)$. Moreover, if $\mu$ is $G$-invariant, then $\bl(\mu)$ is covered by $(B')^{N\n_G(P)}$ if and only if it is covered by $(B')^G$.
\end{proof}

With the same argument, we obtain a version of the above corollary for normal $p$-chains. 

\begin{cor}
\label{cor:Harris-Knorr and p-chains}
Let $N$ be a normal $p'$-subgroup of $G$ and $\d$ be a normal $p$-chain of $G$ with last term $P$. Consider $\mu\in\irr_P(N)$ and set $\mu':=f_P(\mu)\in\irr(\c_N(P))$. If $B'\in\Bl(G_\d)$, then $B'$ covers $\bl(\mu')$ if and only if $(B')^{NG_\d}$ covers $\bl(\mu)$. Moreover, if $\mu$ is $G$-invariant, then $B'$ covers $\bl(\mu')$ if and only if $(B')^G$ covers $\bl(\mu)$.
\end{cor}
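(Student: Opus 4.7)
My plan is to imitate the proof of Corollary \ref{cor:Harris-Knorr} verbatim, with the group $\n_G(P)$ replaced by $G_\d$ throughout. The whole argument rests on a single observation about normal $p$-chains: since $\d=\{D_0<D_1<\dots<D_n=P\}$ is \emph{normal}, every $D_i$ lies inside (and is normal in) $P$, so any element that centralizes $P$ automatically centralizes each $D_i$ and therefore stabilizes $\d$. In particular $\c_N(P)\leq G_\d$, which, combined with the fact that $N$ is a $p'$-group and so $\n_N(P)=\c_N(P)$, yields
\[
\n_{NG_\d}(P)=\c_N(P)\cdot G_\d=G_\d.
\]
This is the analog of the identity $\n_{N\n_G(P)}(P)=\n_G(P)$ that drove the previous corollary, and it is the only place where the \emph{normality} of $\d$ is used.

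With this in hand I would proceed as in Corollary \ref{cor:Harris-Knorr}. Let $b\in\Bl(NP)$ be the unique block covering $\bl(\mu)$ and $b'\in\Bl(\n_{NP}(P))$ the unique block covering $\bl(\mu')$, both of which exist by \cite[Corollary 9.6]{Nav98}; by Brauer's first main theorem applied to $P\unlhd NP$ the blocks $b$ and $b'$ are first main correspondents over $P$. Since $b$ is the unique block of $NP$ covering $\bl(\mu)$ and $b'$ is the unique block of $\n_{NP}(P)$ covering $\bl(\mu')$, transitivity of block covering gives
\[
\Bl(NG_\d\mid \bl(\mu))=\Bl(NG_\d\mid b)\quad\text{and}\quad \Bl(G_\d\mid \bl(\mu'))=\Bl(G_\d\mid b').
\]
Now I would apply Lemma \ref{lem:Harris-Knorr} with the normal subgroup $NP$ of $NG_\d$ and the $p$-subgroup $P$, using the identification $\n_{NG_\d}(P)=G_\d$ from the previous paragraph, to conclude that $B'$ covers $b'$ if and only if $(B')^{NG_\d}$ covers $b$; equivalently, $B'$ covers $\bl(\mu')$ if and only if $(B')^{NG_\d}$ covers $\bl(\mu)$.

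For the last sentence (the $G$-invariant case), I would invoke transitivity of Brauer induction, $(B')^{G}=((B')^{NG_\d})^{G}$, and note that, exactly as in the proof of Corollary \ref{cor:Harris-Knorr}, the $G$-invariance of $\mu$ forces $(B')^{NG_\d}$ to cover $\bl(\mu)$ if and only if its further induction $(B')^{G}$ does. There is essentially no obstacle here beyond the bookkeeping verification that $\n_{NG_\d}(P)=G_\d$ and that the relevant blocks of $NP$ and $\n_{NP}(P)$ stay unique when lifted to $NG_\d$ and $G_\d$; this is precisely where the hypothesis that $\d$ be a \emph{normal} chain (and not merely a chain ending in $P$) is essential, and it is the only ingredient that distinguishes this statement from its predecessor.
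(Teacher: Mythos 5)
Your argument is correct and is exactly the "minor changes" the paper has in mind when it defers to the proof of Corollary~\ref{cor:Harris-Knorr}: the identity $\n_{NG_\d}(P)=\c_N(P)G_\d=G_\d$ is precisely what lets Lemma~\ref{lem:Harris-Knorr} be applied to the pair $NP\unlhd NG_\d$. One small point worth flagging: the containment $\c_N(P)\leq G_\d$ does not actually depend on $\d$ being a \emph{normal} chain --- for any chain $D_0<\dots<D_n=P$ one has $D_i\leq P$ and hence $\c_G(P)\leq\c_G(D_i)\leq\n_G(D_i)$ for all $i$, so your assertion that normality of $\d$ is ``essential'' here is a slight overstatement, though it does not affect the validity of the proof.
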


\begin{proof}
The proof of Corollary \ref{cor:Harris-Knorr} applies with minor changes.
\end{proof}

\subsection{Construction of $N$-block isomorphic character triples}

We prove some useful results that can be used to construct $N$-block isomorphic character triples. First, we give a version of \cite[Theorem 3.14]{Nav-Spa14I} for our situation. This proposition allows to obtain new $N$-block isomorphic character triples involving irreducibly induced characters. This is the case, for instance, when we apply the Fong--Reynolds correspondence or the Clifford correspondence. Before proving this result, we need an easy lemma.

\begin{lem}
\label{lem:Irreducible induction}
Let $N\unlhd G$ and $\vartheta\in\irr(N)$. If $\vartheta^G\in\irr(G)$, then $\c_G(N)\leq N$.
\end{lem}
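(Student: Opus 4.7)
The plan is to reduce the statement to the observation that the inertia subgroup $G_\vartheta$ must equal $N$, and then to observe that the centralizer $\c_G(N)$ is always contained in $G_\vartheta$ for any $\vartheta \in \irr(N)$.

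First I would show the equality $G_\vartheta = N$ using a standard Clifford-theoretic norm computation. By Frobenius reciprocity,
\[
[\vartheta^G, \vartheta^G]_G = [(\vartheta^G)_N, \vartheta]_N.
\]
Since $N$ is normal in $G$, Mackey's formula (or a direct evaluation of the induced character) yields
\[
(\vartheta^G)_N = \sum_{gN \in G/N} \vartheta^g,
\]
and because each $\vartheta^g$ is irreducible, orthogonality gives
\[
[\vartheta^G, \vartheta^G]_G = |\{\,gN \in G/N : \vartheta^g = \vartheta\,\}| = [G_\vartheta : N].
\]
The hypothesis that $\vartheta^G$ is irreducible forces this inner product to equal $1$, hence $G_\vartheta = N$.

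The remaining step is immediate: any element $c \in \c_G(N)$ satisfies $\vartheta^c(n) = \vartheta(c^{-1}nc) = \vartheta(n)$ for all $n \in N$, so $\c_G(N) \leq G_\vartheta$. Combined with $G_\vartheta = N$, this gives $\c_G(N) \leq N$, as desired.

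There is no genuine obstacle here; the argument is just standard Clifford theory, and the only thing to be slightly careful about is the identification $(\vartheta^G)_N = \sum_{gN} \vartheta^g$, which uses the normality of $N$. The whole proof fits in a few lines.
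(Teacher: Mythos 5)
Your proof is correct. It is essentially the same Clifford-theoretic norm computation as in the paper, but organized slightly differently: you compute $[\vartheta^G,\vartheta^G]=[G_\vartheta:N]$ to get the cleaner intermediate fact $G_\vartheta=N$ and then observe $\c_G(N)\leq G_\vartheta$, whereas the paper works directly with $H:=N\c_G(N)$, noting $\vartheta$ is $H$-invariant, that $\psi:=\vartheta^H$ must be irreducible (as an intermediate induction of the irreducible $\vartheta^G$), and that $1=[\psi,\psi]=[\psi_N,\vartheta]=|H:N|$, forcing $H=N$. Your version proves the marginally stronger statement $G_\vartheta=N$ for free, but the two arguments rest on the same Frobenius-reciprocity calculation and are equally short.
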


\begin{proof}
Set $H:=N\c_G(N)$ and observe that $\psi:=\vartheta^H\in\irr(H)$.  Since $\vartheta$ is $H$-invariant we have $\psi_N=e\vartheta$ with $e=|H:N|$. However $e=[\psi_N,\vartheta]=[\psi,\psi]=1$ and therefore $\c_G(N)\leq N$.
\end{proof}

\begin{prop}
\label{prop:Irreducible induction and double relation block}
Let $N\unlhd G$ and $G_0\leq G$. For $i=1,2$, consider $H_i\leq G$ such that $G=NH_i$ and set $M_i:=N\cap H_i$, $H_{0,i}:=G_0\cap H_i$, $M_{0,i}:=G_0\cap M_i$ and $N_0:=G_0\cap N\unlhd G_0$. Suppose that $G=G_0N$, that $H_i=H_{0,i}M_i$ and that $\varphi_i:=(\varphi_{0,i})^{M_i}\in\irr(M_i)$, for some $\varphi_{0,i}\in\irr(M_{0,i})$. If $(H_{0,1},M_{0,1},\varphi_{0,1})\iso{N_0}(H_{0,2},M_{0,2},\varphi_{0,2})$, there exists a defect group $D_i\in \delta(\bl(\varphi_i))$ such that $\c_G(D_i)\leq H_i$, and induction $\ind_{J_{0,i}}^{J_i}:\irr(J_{0,i}\mid \varphi_{0,i})\to \irr(J_i\mid \varphi_i)$ defines a bijection for every $N\leq J\leq G$, where $J_i:=J\cap H_i$ and $J_{0,i}:=J\cap H_{0,i}$, then $(H_1,M_1,\varphi_1)\iso{N}(H_2,M_2,\varphi_2)$.
\end{prop}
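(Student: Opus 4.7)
The plan is to verify the four conditions of Definition \ref{def:N-block isomorphic character triples} for $(H_1, M_1, \varphi_1)$ and $(H_2, M_2, \varphi_2)$. Condition (i) is immediate from $G = NH_i$ and $M_i = N \cap H_i$. Condition (ii) is supplied directly by the hypothesis on defect groups; since $D_i$ is a $p$-subgroup of $M_i \leq N$, it also yields $\c_G(N) \leq \c_G(D_i) \leq H_i$ as demanded by Definition \ref{def:N-block isomorphic character triples}. The real work is constructing projective representations fulfilling (iii) and checking the block induction condition (iv).

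Starting from a pair $(\mathcal{P}_{0,1}, \mathcal{P}_{0,2})$ witnessing the given relation $(H_{0,1}, M_{0,1}, \varphi_{0,1}) \iso{N_0} (H_{0,2}, M_{0,2}, \varphi_{0,2})$, I would build $\mathcal{P}_i$ as a projective-induction of $\mathcal{P}_{0,i}$ from $H_{0,i}$ up to $H_i$. First observe the identity
\[
H_{0,i} \cap M_i = (G_0 \cap H_i) \cap (N \cap H_i) = N_0 \cap H_i = M_{0,i},
\]
so the inclusion $H_{0,i} \hookrightarrow H_i$ induces a canonical isomorphism $H_{0,i}/M_{0,i} \simeq H_i/M_i$, compatible (via $G = G_0 N$) with the chain $H_{0,i}/M_{0,i} \simeq G_0/N_0 \simeq G/N \simeq H_i/M_i$. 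Fix a transversal $T$ for $M_{0,i}$ in $H_{0,i}$; by the identity just noted, $T$ doubles as a transversal for $M_i$ in $H_i$. I would then define $\mathcal{P}_i$ on $H_i$ by the formula
\[
\mathcal{P}_i(tm) := \mathcal{P}_{0,i}(t) \cdot \mathcal{D}_i(m), \qquad t \in T,\ m \in M_i,
\]
where $\mathcal{D}_i = (\mathcal{P}_{0,i}|_{M_{0,i}})^{M_i}$ is the ordinary induced representation affording $\varphi_i$. Then $\mathcal{P}_i|_{M_i}$ affords $\varphi_i$, so $\mathcal{P}_i$ is associated with $(H_i, M_i, \varphi_i)$, and its factor set $\alpha_i$ factors through $H_i/M_i$, coinciding with the pullback of $\alpha_{0,i}$ along the canonical isomorphism; the matching factor-set requirement for $(\mathcal{P}_1, \mathcal{P}_2)$ is therefore inherited from that of $(\mathcal{P}_{0,1}, \mathcal{P}_{0,2})$. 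For the scalar function, any $c \in \c_G(N)$ commutes with $M_i$, so $\mathcal{P}_i(c)$ commutes with the irreducible $\mathcal{D}_i|_{M_i}$ and Schur's lemma gives $\mathcal{P}_i(c) = \mu_i(c) I$; tracing $\mu_i$ through the explicit construction identifies it with the scalar function of $\mathcal{P}_{0,i}$ on the appropriate centralizing subgroup, whence $\mu_1 = \mu_2$ on $\c_G(N)$ follows from the corresponding equality for $(\mathcal{P}_{0,1}, \mathcal{P}_{0,2})$.

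For condition (iv), fix $N \leq J \leq G$, and set $J_0 := J \cap G_0$, $J_i := J \cap H_i$, $J_{0,i} := J \cap H_{0,i}$. By the irreducible-induction hypothesis, every $\psi \in \irr(J_1 \mid \varphi_1)$ is of the form $\psi_0^{J_1}$ for a unique $\psi_0 \in \irr(J_{0,1} \mid \varphi_{0,1})$. Unwinding the trace formula $\tr(\mathcal{Q}_{J_i} \otimes \mathcal{P}_{i, J_i})$ from Definition \ref{def:N-block isomorphic character triples}, the transversal-induction construction of $\mathcal{P}_i$ yields the intertwining identity
\[
\sigma_{J_1}^{(\mathcal{P}_1, \mathcal{P}_2)}(\psi_0^{J_1}) = \sigma_{J_{0,1}}^{(\mathcal{P}_{0,1}, \mathcal{P}_{0,2})}(\psi_0)^{J_2}.
\]
Using transitivity of Brauer's block induction together with its compatibility with ordinary character induction, $\bl(\psi)^J = \bl(\psi_0)^J$ and analogously on the $\sigma$-side. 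The block induction clause of the $N_0$-isomorphism, applied to $\psi_0$ at the group $J_0$, gives $\bl(\psi_0)^{J_0} = \bl(\sigma_{J_{0,1}}(\psi_0))^{J_0}$; inducing this equality up to $J$ delivers condition (iv).

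The main obstacle I anticipate is the careful bookkeeping needed both to confirm that the scalar-function condition in (iii) transfers cleanly through the transversal formula and to verify the intertwining identity in (iv). Both depend on controlling precisely how a general element $g \in H_i$, and in particular an element $c \in \c_G(N)$, interacts with the decomposition $g = tm$ and with the tensor-trace formula defining $\sigma_{J_i}$. Once these local compatibilities are established, the remainder of the argument is a routine transitivity manipulation of Brauer-induced blocks combined with the hypothesis that $\ind_{J_{0,i}}^{J_i}$ is a bijection.
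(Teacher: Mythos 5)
Your outline matches the paper's overall strategy—induce a pair of projective representations from $(\mathcal{P}_{0,1},\mathcal{P}_{0,2})$ and push the block-induction condition through the intertwining identity $\sigma_{J_1}(\psi_0^{J_1}) = \sigma_{0,J_{0,1}}(\psi_0)^{J_2}$—but there are two genuine gaps.

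First, your formula $\mathcal{P}_i(tm) := \mathcal{P}_{0,i}(t)\cdot\mathcal{D}_i(m)$ does not make sense: $\mathcal{P}_{0,i}(t)$ is a $\varphi_{0,i}(1)\times\varphi_{0,i}(1)$ matrix while $\mathcal{D}_i(m)$ has size $\varphi_i(1) = |M_i:M_{0,i}|\varphi_{0,i}(1)$. You also chose the wrong transversal: you take a transversal of $M_{0,i}$ in $H_{0,i}$, whereas what the construction needs is a transversal $\{t_{i,1},\dots,t_{i,n}\}$ of $H_{0,i}$ in $H_i$ (of size $n=|H_i:H_{0,i}|=|M_i:M_{0,i}|$) \emph{chosen inside} $M_i$, and then the genuine projective induction $\mathcal{P}_i := (\mathcal{P}_{0,i})^{H_i}$ in block-matrix form, as in the proof of \cite[Theorem 3.14]{Nav-Spa14I}. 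The choice of transversal inside $M_i$ is what later guarantees $[t_{i,j},\c_G(N)]=1$ and lets one compare scalar functions.

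Second, and more fundamentally, the scalar-function step of condition (iii) cannot proceed by ``tracing $\mu_i$ through the explicit construction'' without first knowing that $\c_G(N)$ lies inside $G_0$: the hypothesis only gives agreement of scalar functions on $\c_{G_0}(N_0)$, and $\mathcal{P}_{0,i}$ is only defined on $H_{0,i}$. The paper's proof establishes the key claim $\c_{H_i}(M_i)\leq G_0$ (hence $\c_G(N)\leq\c_{G_0}(N_0)$) by taking $x\in\c_{H_i}(M_i)$, extending $\varphi_i$ to $\langle M_i,x\rangle$, pulling the extension back through the irreducible-induction bijection, and invoking Lemma \ref{lem:Irreducible induction}. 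This is the one and only place where the irreducible-induction hypothesis enters condition (iii), and your proof proposal does not recognize that this containment needs to be proved. Without it, the scalar-function equality $\mu_1=\mu_2$ on $\c_G(N)$ does not follow.
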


\begin{proof}
Assume $(H_{0,1},M_{0,1},\varphi_{0,1})\iso{N_0}(H_{0,2},M_{0,2},\varphi_{0,2})$ via $(\mathcal{P}_{0,1},\mathcal{P}_{0,2})$ and let $\alpha_{0,i}$ be the factor set of $\mathcal{P}_{0,i}$. Consider the canonical isomorphisms $l_{0,i}:H_{0,i}/M_{0,i}\to G_0/N_0$ and $l_i:H_i/M_i\to G/N$ and set $i_0:=l_{0,2}^{-1}\circ l_{0,1}$ and $i=l_2^{-1}\circ l_1$. If $j:G/N\to G_0/N_0$ and $j_i:H_i/M_i\to H_{0,i}/M_{0,i}$ are the canonical isomorphisms, then we have a commutative diagram
\begin{center}
\begin{tikzcd}
H_1/M_1\arrow[r, "l_1"]\arrow[rr, bend left={21pt}, "i"]\arrow[d,swap, "j_1"] &G/N\arrow[d, "j"] & H_2/M_2\arrow[l, swap, "l_2"]\arrow[d, "j_2"]
\\
H_{0,1}/M_{0,1}\arrow[r, swap, "l_{0,1}"]\arrow[rr, swap, bend right={21pt}, "i_0"] &G_0/N_0 & H_{0,2}/M_{0,2}\arrow[l, "l_{0,2}"]
\end{tikzcd}
\end{center}
As in \cite[Theorem 3.14]{Nav-Spa14I}, we consider the projective representation $\mathcal{P}_i:=(\mathcal{P}_{0,i})^{H_i}\in {\proj(H_i\mid \alpha_i)}$ defined as follows: let $\{t_{i,1},\dots, t_{i,n}\}$ be a $H_i$-transversal for $H_{0,i}$ contained in $M_i$, where $n:={|G:G_0|}={|H_i:H_{0,i}|}$. For every $x\in H_i$, let  
\begin{align*}
\mathcal{P}_{i,j,k}(x):=\begin{cases}
\mathcal{P}_{0,i}(t_{i,j}^{-1}xt_{i,k}), &\text{ if }t_{i,j}^{-1}xt_{i,k}\in H_{0,i}
\\
0,&\text{ otherwise }	
\end{cases}
\end{align*}
and define
$$\mathcal{P}_i(x):=\begin{pmatrix}
\mathcal{P}_{i,1,1}(x) &\dots &\mathcal{P}_{i,1,n}(x)
\\
\vdots &&\vdots
\\
\mathcal{P}_{i,n,1}(x) &\dots &\mathcal{P}_{i,n,n}(x)
\end{pmatrix}.$$
Then, $\mathcal{P}_i$ is a projective representation of $H_i$ associated with $\varphi_i=\varphi_{0,i}^{M_i}$ with factor set $\alpha_i$ satisfying $\alpha_i(x,y)=\alpha_{0,i}(j_i(x),j_i(y))$, for all $x,y\in H_i/M_i$. Since
$$\alpha_{0,1}(j_1(x),j_1(y))=\alpha_{0,2}(i_0(j_1(x)),i_0(j_1(y))),$$
we conclude that $\alpha_1(x,y)=\alpha_2(i(x),i(y))$, for all $x,y\in H_1/M_1$.

We claim that $\c_{H_i}(M_i)\leq G_0$. In this case, since $\c_G(N)\leq \c_G(D_i)\leq H_i$, we deduce $\c_G(N)\leq \c_{G_0}(N_0)$. To prove the claim, fix $x\in \c_{H_i}(M_i)$, set $J_i:=\langle M_i,x \rangle$ and $J_{0,i}:=G_0\cap J_i$ and let $\varphi_{i,x}$ be an extension of $\varphi_i$ to $J_i$. Since $\ind_{J_{0,i}}^{J_i}:\irr(J_{0,i}\mid \varphi_{0,i})\to \irr(J_i\mid \varphi_i)$ is a bijection, we can find an irreducible character $\varphi_{0,i,x}\in\irr(J_{0,i}\mid \varphi_{0,i})$ such that $\varphi_{0,i,x}^{J_i}=\varphi_{i,x}$. By Lemma \ref{lem:Irreducible induction} we conclude that $x\in \c_{J_i}(J_{0,i})\leq J_{0,i}\leq G_0$. This proves the claim, hence $\c_G(N)\leq\c_{G_0}(N_0)$. Now, since $\mathcal{P}_{0,1,\c_{G_0}(N_0)}$ and $\mathcal{P}_{0,2,\c_{G_0}(N_0)}$ are associated with the same scalar function and $[t_{i,j},\c_G(N)]=1$ for every $i=1,2$ and $j=1,\dots, n$, then the same is true for $\mathcal{P}_{1,\c_G(N)}$ and $\mathcal{P}_{2,\c_G(N)}$.

Next, fix $N\leq J\leq G$, set $J_0:=J\cap G_0$, $J_i:=J\cap H_i$ and $J_{0,i}:=J\cap H_{0,i}$, and consider the bijections given by the character triple isomorphisms induced by $(\mathcal{P}_{0,1},\mathcal{P}_{0,2})$ and $(\mathcal{P}_1,\mathcal{P}_2)$:
\begin{align*}
\sigma_{0,J_{0,1}}:\irr(J_{0,1}\mid \varphi_{0,1}) &\to \irr(J_{0,2}\mid \varphi_{0,2})
\\
\tr(\mathcal{Q}_{0,J_{0,1}}\otimes \mathcal{P}_{0,1,J_{0,1}}) &\mapsto \tr(\mathcal{Q}_{0,J_{0,2}}\otimes \mathcal{P}_{0,2,J_{0,2}})
\end{align*}
where $\mathcal{Q}_0\in\proj(J_0/N_0)$, and
\begin{align*}
\sigma_{J_1}:\irr(J_1\mid \varphi_1) &\to \irr(J_2\mid \varphi_2)
\\
\tr(\mathcal{Q}_{J_1}\otimes \mathcal{P}_{1,J_1}) &\mapsto \tr(\mathcal{Q}_{J_2}\otimes \mathcal{P}_{2,J_2})
\end{align*}
where $Q\in\proj(J/N)$. Observe that $\sigma_{J_1}\left(\psi_0^{J_1}\right)=\left(\sigma_{0,J_{0,1}}(\psi_0)\right)^{J_2}$, for all $\psi_0\in \irr(J_{0,1}\mid \varphi_{0,1})$. Let $\psi\in\irr(J_1\mid\varphi_1)$ and write $\psi=\psi_0^{J_1}$, for some $\psi_0\in\irr(J_{0,1}\mid \varphi_{0,1})$. Since by hypothesis $\bl(\psi_0)^{J_0}=\bl(\sigma_{0,J_{0,1}}(\psi_0))^{J_0}$, we conclude that $\bl(\psi)^J=\bl(\sigma_{J_1}(\psi))^J$. 
\end{proof}

Whenever we have a pair of $N$-isomorphic character triples, there is an induced strong isomorphism of character triples with some special properties. In the following lemma we describe some of these special features.

\begin{lem}
\label{lem:Bijections induced by triple isomorphisms}
Let $(H_1,M_1,\vartheta_1)\iso{N}(H_2,M_2,\vartheta_2)$ given by $(\mathcal{P}_1,\mathcal{P}_2)$ and, for $N\leq J\leq G=NH_i$, consider the bijection $\sigma_{J_1}^{(\mathcal{P}_1,\mathcal{P}_2)}:\irr(J_1\mid \vartheta_1)\to\irr(J_2\mid \vartheta_2)$, where $J_i:=J\cap H_i$. Let $\psi_1\in\irr(J_1\mid \vartheta_1)$ and $\psi_2:=\sigma_{J_1}^{(\mathcal{P}_1,\mathcal{P}_2)}(\psi_1)$. Then:
\begin{enumerate}
\item there exists $\wh{\mathcal{Q}}\in\proj(JH_{i,J,\psi_i}/N)$ such that $\wh{\mathcal{Q}}_{i,J_i}\otimes \mathcal{P}_{i,J_i}$ affords $\psi_i$, where $\wh{\mathcal{Q}}_i:=\mathcal{Q}_{H_{i,J,\psi_i}}$; 
\item if $\mathcal{D}_i:=\wh{\mathcal{Q}}_i\otimes \mathcal{P}_{i}$, then
$$(H_{1,J,\psi_1},J_1,\psi_1)\iso{J}(H_{2,J,\psi_2},J_2,\psi_2)$$
via $(\mathcal{D}_1,\mathcal{D}_2)$. Moreover
$$\sigma_{K_1}^{(\mathcal{D}_1,\mathcal{D}_2)}(\chi_1)=\sigma_{K_1}^{(\mathcal{P}_1,\mathcal{P}_2)}(\chi_1),$$
for every $J\leq K\leq JH_{i,J,\psi_i}$ and $\chi\in\irr(K_1\mid \psi_1)\subseteq \irr(K_1\mid \vartheta_1)$, where $K_i:=K\cap H_i$;
\item $d(\psi_1)-d(\psi_2)=d(\vartheta_1)-d(\vartheta_2)$.
\end{enumerate}
\end{lem}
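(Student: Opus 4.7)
The plan is to extract the projective representation on $J/N$ that implicitly witnesses $\sigma_{J_1}^{(\mathcal{P}_1,\mathcal{P}_2)}$, extend it over a common overgroup, and use this extension to define $(\mathcal{D}_1,\mathcal{D}_2)$. By the definition of $\sigma_{J_1}^{(\mathcal{P}_1,\mathcal{P}_2)}$, there is $\mathcal{Q}\in\proj(J/N\mid \alpha^{-1}_{J\times J})$ with $\psi_i=\tr(\mathcal{Q}_{J_i}\otimes\mathcal{P}_{i,J_i})$ for both $i=1,2$. Comparing degrees yields $\psi_i(1)=\mathcal{Q}(1)\vartheta_i(1)$, and since $|J_i|/|M_i|=|J/N|$, claim (iii) follows at once. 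As $(i,\sigma)$ is a strong character triple isomorphism, both the stabilizers of $\psi_i$ and the normalizers of $J_i$ inside $H_i/M_i$ correspond under $i$; consequently $(H_{1,J,\psi_1})N=(H_{2,J,\psi_2})N$ in $G$, and $G':=JH_{1,J,\psi_1}=JH_{2,J,\psi_2}$ is well defined. Because $\tr(\mathcal{Q})$ is $G'/N$-invariant, a standard extension argument (passing to a Schur cover of $G'/N$ and using Gallagher-type extension) yields $\wh{\mathcal{Q}}\in\proj(G'/N)$ extending $\mathcal{Q}$; pulling back through $H_{i,J,\psi_i}\hookrightarrow G'\twoheadrightarrow G'/N$ produces $\wh{\mathcal{Q}}_i$, and $\wh{\mathcal{Q}}_{i,J_i}\otimes\mathcal{P}_{i,J_i}$ affords $\psi_i$ since its restriction to $J_i$ equals $\mathcal{Q}\otimes\mathcal{P}_{i,J_i}$. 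This establishes (i).

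For (ii), set $\mathcal{D}_i:=\wh{\mathcal{Q}}_i\otimes\mathcal{P}_{i,H_{i,J,\psi_i}}$ and verify the clauses of Definition \ref{def:N-block isomorphic character triples}. The normality and intersection conditions are immediate ($J_i=J\cap H_{i,J,\psi_i}$, with $G'$ as above). For the defect condition, since $\bl(\psi_i)$ covers $\bl(\vartheta_i)$, standard block theory lets us choose $D_i\in\delta(\bl(\psi_i))$ containing the prescribed $D^0_i\in\delta(\bl(\vartheta_i))$ with $\c_G(D^0_i)\le H_i$; then $\c_{G'}(D_i)\le\c_G(D^0_i)\le H_i$, and decomposing $g\in\c_{G'}(D_i)$ as $g=jh$ with $j\in J$, $h\in H_{i,J,\psi_i}$ forces $j=gh^{-1}\in J\cap H_i=J_i\le H_{i,J,\psi_i}$, so $g\in H_{i,J,\psi_i}$. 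The factor sets of $\mathcal{D}_1$ and $\mathcal{D}_2$ descend to $H_{i,J,\psi_i}/J_i$ because the contribution of $\wh{\mathcal{Q}}$ on $J/N$ is $\alpha^{-1}_{J\times J}$, which cancels the restriction of $\alpha_i$ to $J_i/M_i$; they correspond under the induced isomorphism $H_{1,J,\psi_1}/J_1\cong H_{2,J,\psi_2}/J_2$ because both are pulled back from a common object on $G'/N$. The scalar function condition on $\c_{G'}(J)\le\c_G(N)$ inherits from the one for $(\mathcal{P}_1,\mathcal{P}_2)$ since $\wh{\mathcal{Q}}$ is common to both sides (and by Schur's lemma $\wh{\mathcal{Q}}$ acts by scalars on $\c_{G'/N}(J/N)$, which contains the image of $\c_{G'}(J)$). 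For the block induction clause together with the ``moreover'' identity, one checks that for $J\le K\le G'$ the equality $K\cap H_i=K\cap H_{i,J,\psi_i}$ holds (since $G'\cap H_i=J_iH_{i,J,\psi_i}=H_{i,J,\psi_i}$), and then for $\chi\in\irr(K_1\mid\psi_1)\subseteq\irr(K_1\mid\vartheta_1)$ the associativity $\mathcal{R}\otimes\mathcal{D}_{1,K_1}=(\mathcal{R}\otimes\wh{\mathcal{Q}}_{K_1})\otimes\mathcal{P}_{1,K_1}$ yields $\sigma_{K_1}^{(\mathcal{D}_1,\mathcal{D}_2)}(\chi)=\sigma_{K_1}^{(\mathcal{P}_1,\mathcal{P}_2)}(\chi)$, whence the block induction for $(\mathcal{D}_1,\mathcal{D}_2)$ is exactly that for $(\mathcal{P}_1,\mathcal{P}_2)$ applied to $\chi$.

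The main obstacle I foresee is constructing $\wh{\mathcal{Q}}$ as a \emph{single} projective representation of $G'/N$ valid for both sides simultaneously (rather than two unrelated extensions, one per $i$); this requires weaving the factor-set compatibility $\alpha_1(x,y)=\alpha_2(i(x),i(y))$ into the extension process, and is the most delicate bookkeeping step. A secondary subtlety is aligning the chosen $D_i\in\delta(\bl(\psi_i))$ with the prescribed $D^0_i\in\delta(\bl(\vartheta_i))$ (up to conjugation within $M_i$), which follows from standard results about defect groups of covering blocks containing defect groups of covered blocks.
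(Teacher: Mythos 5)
Your argument for part (iii) is fine and a touch more direct than the paper's (which invokes \cite[Lemma 11.24]{Isa76} for $\psi_1(1)/\vartheta_1(1)=\psi_2(1)/\vartheta_2(1)$; you simply observe both sides equal $\mathcal{Q}(1)$). The reductions at the start (to $J\unlhd G$ and then to $H_i=H_{i,\psi_i}$ via the strong character-triple isomorphism so that $G'=JH_{1,\psi_1}=JH_{2,\psi_2}$) also match the paper. But the central step of (i) has a genuine gap. You assert that ``$\tr(\mathcal{Q})$ is $G'/N$-invariant'' and then extend $\mathcal{Q}\in\proj(J/N\mid\alpha^{-1}_{J\times J})$ directly to $\wh{\mathcal{Q}}\in\proj(G'/N)$. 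Neither claim is justified, and neither is as innocuous as it looks. For a projective representation, ``invariance of the trace'' is slippery: for $h\in H_{1,\psi_1}\cap\n_{H_1}(J_1)$, what one can extract from $\psi_1^h=\psi_1$ and the (projective) $H_1$-covariance of $\mathcal{P}_1$ is only that $\mathcal{Q}^h$ is similar to $\mathcal{Q}$ \emph{up to a scalar function} (the uniqueness in \cite[Theorem 3.3]{Spa17} is only up to similarity, and $\mathcal{P}_{1,J_1}^h$ agrees with $\mathcal{P}_{1,J_1}$ only up to a scalar). So $\tr(\mathcal{Q})$ need not literally be fixed. And even granting an invariance statement, extending a projective representation of a normal subgroup to the overgroup is possible but introduces a factor set on $G'/N$ that you do not control a priori; the compatibility between the factor set of $\wh{\mathcal{Q}}$ and the $\alpha_i$, and the fact that $\wh{\mathcal{Q}}_i$ literally restricts to $\mathcal{Q}_{J_i}$, are precisely what is needed and are not delivered by a generic ``Gallagher-type extension.'' You flag this as ``delicate bookkeeping,'' but it is more than bookkeeping — it is the missing argument.

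The paper sidesteps both issues with a different construction. It extends the \emph{ordinary} representation $\mathcal{Q}_{J_1}\otimes\mathcal{P}_{1,J_1}$ of $J_1$ (which affords the $H_1$-invariant character $\psi_1$) to a projective representation $\mathcal{D}_1$ of $H_1$ associated with $(H_1,J_1,\psi_1)$, using \cite[Theorem 5.5]{Nav18}. It then \emph{factorizes} $\mathcal{D}_1=\wh{\mathcal{Q}}_1\otimes\mathcal{P}_1$ with $\wh{\mathcal{Q}}_{1,J_1}=\mathcal{Q}_{J_1}$, following the argument of \cite[p.~707]{Nav-Spa14I} / \cite[Theorem~8.16]{Nav98}. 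Because $N\leq\ker(\mathcal{Q})$, this forces $M_1\leq\ker(\wh{\mathcal{Q}}_1)$, so $\wh{\mathcal{Q}}_1$ factors through $H_1/M_1\simeq G/N\simeq H_2/M_2$ and yields a single $\wh{\mathcal{Q}}\in\proj(G/N)$ serving both sides, with the factor set pinned down automatically by the factorization. To repair your proof, replace the direct-extension step by this extend-then-factor construction; your computations in (ii) (the associativity $\mathcal{R}\otimes\mathcal{D}_{1,K_1}=(\mathcal{R}\otimes\wh{\mathcal{Q}}_{K_1})\otimes\mathcal{P}_{1,K_1}$ and the block-induction inheritance) and the defect-group argument then go through essentially as you wrote them.
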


\begin{proof}
First, as $JH_{1,J}=G_J=JH_{2,J}$, we may assume $J\unlhd G$. Moreover, since $(i,\sigma)$ is a strong isomorphism of character triples, we know that 
$$\sigma_{J_1}(\psi_1)^{x_2}=\sigma_{J_1^{x_1}}(\psi_1^{x_1})=\sigma_{J_1}(\psi_1^{x_1}),$$ 
for every $x_1\in H_1$ and $x_2\in H_2$ such that $i(M_1x_1)=M_2x_2$. In particular $i(H_{1,\psi_1}/M_1)=H_{2,\psi_2}/M_2$ and so $JH_{1,\psi_1}=JH_{2,\psi_2}$. Therefore, we may assume $H_i=H_{i,\psi_i}$.

By \cite[Theorem 3.3]{Spa17}, there exists $\mathcal{Q}\in\proj(J/N\mid \alpha^{-1}_{J\times J})$ such that $\psi_i$ is afforded by $\mathcal{Q}_{J_i}\otimes \mathcal{P}_{i,J_i}$. By \cite[Theorem 5.5]{Nav18}, there exists $\mathcal{D}_1\in\proj(H_1)$ such that $\mathcal{D}_{1,J_1}=\mathcal{Q}_{J_1}\otimes \mathcal{P}_{1,J_1}$. Arguing as in \cite[p. 707]{Nav-Spa14I}, relying on the proof of \cite[Theorem 8.16]{Nav98} we can find $\wh{\mathcal{Q}}_1\in\proj(H_1)$ such that
$$\mathcal{D}_1=\wh{\mathcal{Q}}_1\otimes \mathcal{P}_1$$
and $\wh{\mathcal{Q}}_{1,J_1}=\mathcal{Q}_{J_1}$. Since $N\leq \ker(\mathcal{Q})$, we deduce that $M_1\leq \ker(\mathcal{Q}_{J_1})\leq \ker(\wh{\mathcal{Q}}_1)$. Now $\wh{\mathcal{Q}}_1\in\proj(H_1/M_1)$ and, using the isomorphism $H_1/M_1\simeq G/N\simeq H_2/M_2$, we define $\wh{\mathcal{Q}}\in\proj(G/N)$ and $\wh{\mathcal{Q}}_2\in \proj(H_2/M_2)$. This proves (i). Set
$$\mathcal{D}_2:=\wh{\mathcal{Q}}_2\otimes \mathcal{P}_2.$$
We claim that $(H_1,J_1,\psi_1)\iso{J}(H_2,J_2,\psi_2)$ via $(\mathcal{D}_1,\mathcal{D}_2)$. Clearly the condition on the factor sets is satisfied. Moreover, since $\psi_i$ lies over $\vartheta_i$, we can find $D_i\in\delta(\bl(\psi_i))$ and $Q_i\in\delta(\bl(\vartheta_i))$ with $Q_i\leq D_i$ and $\c_G(Q_i)\leq H_i$. It follows that $\c_G(D_i)\leq H_i$. To conclude, we need to check the condition on block induction for $\sigma^{(\mathcal{D}_1,\mathcal{D}_2)}$. It's enough to show that $\sigma_{K_1}^{(\mathcal{D}_1,\mathcal{D}_2)}$ coincides with $\sigma_{K_1}^{(\mathcal{P}_1,\mathcal{P}_2)}$ on $\irr(K_1\mid \psi_1)$, for every $J\leq K\leq G$, where $K_i:=K\cap H_i$. Consider $\chi_i\in\irr(K_1\mid \psi_1)$ and let $\mathcal{R}\in\proj(K/J)$ such that $\chi_1=\tr(\mathcal{R}_{K_1}\otimes \mathcal{D}_{1,K_1})$. Then
\begin{align*}
\sigma_{K_1}^{(\mathcal{D}_1,\mathcal{D}_2)}(\chi_1)&=\tr(\mathcal{R}_{K_2}\otimes \mathcal{D}_{2,K_2})
\\
&=\tr(\mathcal{R}_{K_2}\otimes \wh{Q}_{K_2}\otimes \mathcal{P}_{2,K_2})
\\
&=\sigma_{K_1}^{(\mathcal{P}_1,\mathcal{P}_2)}(\tr(\mathcal{R}_{K_1}\otimes \wh{\mathcal{Q}}_{K_1}\otimes \mathcal{P}_{1,K_1}))
\\
&=\sigma_{K_1}^{(\mathcal{P}_1,\mathcal{P}_2)}(\tr\left(\mathcal{R}_{K_1}\otimes \mathcal{D}_{1,K_1}\right))
\\
&=\sigma_{K_1}^{(\mathcal{P}_1,\mathcal{P}_2)}(\chi_1).
\end{align*}
and the proof of (ii) is complete. To conclude, since $\psi_1(1)/\vartheta_1(1)=\psi_2(1)/\vartheta_2(1)$ by \cite[Lemma 11.24]{Isa76} and $|J:J_i|=|N:M_i|$, it follows that
$$p^{d(\psi_1)-d(\psi_2)}=\dfrac{|J_1|_p\psi_2(1)_p}{|J_2|_p\psi_1(1)_p}=\dfrac{|M_1|_p\vartheta_2(1)_p}{|M_2|_p\vartheta_1(1)_p}=p^{d(\vartheta_1)-d(\vartheta_2)}.$$
This finishes the proof.
\end{proof}

Given a defect preserving equivariant bijection respecting $N$-block isomorphic character triples, we show how to obtain another bijection over the given one that satisfies similar properties. For $N\unlhd G$ and $\mathcal{S}\subseteq \irr(N)$, we denote by $\irr(G\mid \mathcal{S})$ the set of $\chi\in\irr(G)$ that lies over some $\vartheta\in\mathcal{S}$.

\begin{prop}
\label{prop:Constructing bijections over bijections}
Let $K\unlhd A$, $A_0\leq A$ with $A=KA_0$ and, for every subgroup $X\leq A$, set $X_0:=X\cap A_0$. Consider $A_0$-stable subsets of characters $\mathcal{S}\subseteq \irr(K)$ and $\mathcal{S}_0\subseteq \irr(K_0)$. Assume there exists an $A_0$-equivariant bijection
$$\Psi:\mathcal{S}\to\mathcal{S}_0$$
such that
$$\left(A_\vartheta,K,\vartheta\right)\iso{K}\left(A_{0,\vartheta},K_0,\Psi(\vartheta)\right)$$
and 
$$\c_A(D)\leq A_0,$$
for every $\vartheta\in\mathcal{S}$ and some defect group $D$ of $\bl(\Psi(\vartheta))$. Then, for every $K\leq J\leq A$, there exists an $A_{0,J}$-equivariant bijection
$$\Phi_J:\irr(J\mid \mathcal{S})\to\irr(J_0\mid \mathcal{S}_0)$$
such that
$$\left(A_{J,\chi},J,\chi\right)\iso{J}\left(A_{0,J,\chi},J_0, \Phi_J(\chi)\right)$$
and
$$\c_A(Q)\leq A_0,$$
for every $\chi\in\irr(J\mid \mathcal{S})$ and some defect group $Q$ of $\bl(\Phi_J(\chi))$. Moreover $\Psi$ preserves the defect of characters if and only if so does $\Phi_J$.
\end{prop}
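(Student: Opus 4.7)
The plan is to define $\Phi_J$ by combining the Clifford correspondence (for both $K\unlhd J$ and $K_0\unlhd J_0$) with the strong isomorphism $\sigma:=\sigma_{J_\vartheta}^{(\mathcal{P}_1,\mathcal{P}_2)}$ induced by the hypothesized triple isomorphism at the level of $J_\vartheta$. Given $\chi\in\irr(J\mid \mathcal{S})$, choose $\vartheta\in\mathcal{S}$ below $\chi$ and let $\psi\in\irr(J_\vartheta\mid \vartheta)$ be the Clifford correspondent so that $\chi=\psi^J$. Since $K\leq J_\vartheta\leq A_\vartheta=KA_{0,\vartheta}$ and $J_\vartheta\cap A_{0,\vartheta}=J_{0,\vartheta}$, the hypothesis $(A_\vartheta,K,\vartheta)\iso{K}(A_{0,\vartheta},K_0,\Psi(\vartheta))$ via some pair $(\mathcal{P}_1,\mathcal{P}_2)$ yields $\sigma(\psi)\in\irr(J_{0,\vartheta}\mid \Psi(\vartheta))$. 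The $A_0$-equivariance of $\Psi$ implies $(J_0)_{\Psi(\vartheta)}=J_{0,\vartheta}$, so by Clifford $\sigma(\psi)^{J_0}\in\irr(J_0\mid \Psi(\vartheta))\subseteq\irr(J_0\mid\mathcal{S}_0)$. I set $\Phi_J(\chi):=\sigma(\psi)^{J_0}$.

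For well-definedness, bijectivity, and $A_{0,J}$-equivariance: since $A=KA_0$ and $K$ acts trivially on $\irr(K)$, one gets $J=KJ_0$, whence $J$-orbits on $\mathcal{S}$ coincide with $J_0$-orbits. A different representative $\vartheta^{j_0}$ (with $j_0\in J_0$) in the same orbit replaces $(\psi,\mathcal{P}_1,\mathcal{P}_2)$ by $(\psi^{j_0},\mathcal{P}_1^{j_0},\mathcal{P}_2^{j_0})$, producing $\sigma(\psi)^{j_0}$ which, after induction to $J_0$, gives the same $\sigma(\psi)^{J_0}$ by the standard identity $(\alpha^g)^L=\alpha^L$ for $g\in L$; hence $\Phi_J$ is well-defined. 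Its inverse is constructed symmetrically from $\Psi^{-1}$, yielding bijectivity, and the $A_0$-equivariance of $\Psi$ (extended to conjugation on witnessing pairs) combined with the natural equivariance of the Clifford correspondence gives $A_{0,J}$-equivariance of $\Phi_J$.

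The heart of the argument is the character triple relation, which I would establish in two stages. First, Lemma~\ref{lem:Bijections induced by triple isomorphisms} applied to the hypothesized triple isomorphism with intermediate subgroup $J_\vartheta$ yields
$$((A_\vartheta)_{J_\vartheta,\psi},J_\vartheta,\psi)\iso{J_\vartheta}((A_{0,\vartheta})_{J_\vartheta,\sigma(\psi)},J_{0,\vartheta},\sigma(\psi)).$$
Setting $G_0:=A_J\cap A_{\vartheta,\psi}$, one has $G_0\leq (A_\vartheta)_{J_\vartheta,\psi}$ (elements of $A_J$ fixing $\vartheta$ normalize $J_\vartheta$) and $G_0\cap A_0\leq (A_{0,\vartheta})_{J_\vartheta,\sigma(\psi)}$, while the decomposition $G_0=K(G_0\cap A_0)$ (from $A_\vartheta=KA_{0,\vartheta}$ together with $K\leq G_0$) allows the projective representations from Lemma~\ref{lem:Bijections induced by triple isomorphisms} to be restricted, producing
$$(G_0,J_\vartheta,\psi)\iso{J_\vartheta}(G_0\cap A_0,J_{0,\vartheta},\sigma(\psi)).$$
Second, apply Proposition~\ref{prop:Irreducible induction and double relation block} with $G=A_{J,\chi}$, $N=J$, $G_0$ as above, $H_1=A_{J,\chi}$, and $H_2=A_{0,J,\chi}$: the required decompositions $G=NH_i=G_0 N$ and $H_i=H_{0,i}M_i$ follow from the Frattini-type identities $A_{J,\chi}=J\cdot G_0=K\cdot A_{0,J,\chi}$, and the irreducible induction condition $\irr(J'\cap G_0\mid\psi)\to\irr(J'\mid\chi)$ for $J\leq J'\leq G$ reduces to Clifford correspondence once one verifies $(J')_\psi=J'\cap G_0$ (since any $x\in J'\leq A_J$ with $\psi^x=\psi$ satisfies $\vartheta^x=\vartheta$ via $\psi_K=e\vartheta$, and conversely). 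This yields the desired $(A_{J,\chi},J,\chi)\iso{J}(A_{0,J,\chi},J_0,\Phi_J(\chi))$.

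Finally, for the centralizer condition, the hypothesis provides a defect group $D$ of $\bl(\Psi(\vartheta))$ with $\c_A(D)\leq A_0$; standard blockwise Clifford theory (applied first to the covering $\bl(\sigma(\psi))$ over $\bl(\Psi(\vartheta))$, then to Fong--Reynolds relating $\bl(\Phi_J(\chi))$ and $\bl(\sigma(\psi))$) lets me pick a defect group $Q$ of $\bl(\Phi_J(\chi))$ containing a conjugate of $D$, so that $\c_A(Q)\leq\c_A(D)\leq A_0$. Defect preservation follows from Lemma~\ref{lem:Bijections induced by triple isomorphisms}(iii) combined with the identity $d(\chi)=d(\psi)$ for the Clifford correspondent (as $\chi(1)=[J:J_\vartheta]\psi(1)$), giving $d(\chi)-d(\Phi_J(\chi))=d(\vartheta)-d(\Psi(\vartheta))$. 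I expect the main obstacle to be the careful bookkeeping of the various stabilizer subgroups, particularly the verification of the restriction step producing the $J_\vartheta$-block isomorphism at the level of $G_0$, and checking all the group-theoretic hypotheses of Proposition~\ref{prop:Irreducible induction and double relation block}.
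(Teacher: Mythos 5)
Your proposal is correct and follows essentially the same route as the paper: define $\Phi_J$ by composing Clifford correspondents with the strong isomorphism from the hypothesis, invoke Lemma~\ref{lem:Bijections induced by triple isomorphisms} to obtain a $J_\vartheta$-block isomorphism at the level of the stabilizer $G_0=A_{\vartheta,J,\psi}$, lift to a $J$-block isomorphism via Proposition~\ref{prop:Irreducible induction and double relation block}, and handle the centralizer condition by choosing $Q$ with $D\leq Q$ using block covering. The only difference is presentational: the paper organizes the definition of $\Phi_J$ through explicit $\n_{A_0}(J)$-transversals rather than a well-definedness argument, and verifies the defect-group containment directly from $\bl(\Phi_J(\chi))$ covering $\bl(\Psi(\vartheta))$ rather than via the intermediate $\bl(\sigma(\psi))$.
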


\begin{proof}
Consider an $\n_{A_0}(J)$-transversal $\mathbb{S}$ in $\mathcal{S}$ and define $\mathbb{S}_0:=\{\Psi(\vartheta)\mid \vartheta\in\mathbb{S}\}$. Since $\Psi$ is $A_0$-equivariant, it follows that $\mathbb{S}_0$ is an $\n_{A_0}(J)$-transversal in $\mathcal{S}_0$. For every $\vartheta\in\mathbb{S}$, with $\vartheta_0:=\Psi(\vartheta)\in\mathbb{S}_0$, we fix a pair of projective representations $(\mathcal{P}^{(\vartheta)},\mathcal{P}^{(\vartheta_0)}_0)$ giving $(A_\vartheta,K,\vartheta)\iso{K}(A_{0,\vartheta},K_0,\vartheta_0)$. Now, let $\mathbb{T}$ be an $\n_{A_0}(J)$-transversal in $\irr(J\mid \mathcal{S})$ such that every character $\chi\in\mathbb{T}$ lies above a character $\vartheta\in\mathbb{S}$ (this can be done by the choice of $\mathbb{S}$). Moreover, as $A=KA_0$, we have $J=KJ_0$ and therefore every $\chi\in\mathbb{T}$ lies over a unique $\vartheta\in\mathbb{S}$ by Clifford's theorem.

For $\chi\in\mathbb{T}$ lying over $\vartheta\in\mathbb{S}$, let $\varphi\in\irr(J_\vartheta\mid \vartheta)$ be the Clifford correspondent of $\chi$ over $\vartheta$. Set $\vartheta_0:=\Psi(\vartheta)\in\mathbb{S}_0$ and consider the $\n_{A_0}(J)_\vartheta$-equivariant bijection $\sigma_{J_\vartheta}:\irr(J_\vartheta\mid \vartheta)\to\irr(J_{0,\vartheta}\mid \vartheta_0)$ induced by our choice of projective representations $(\mathcal{P}^{(\vartheta)},\mathcal{P}_0^{(\vartheta_0)})$. Let $\varphi_0:=\sigma_{J_\vartheta}(\varphi)$. Since $\Psi$ is $A_0$-equivariant, we deduce that $J_{0,\vartheta}=J_{0,\vartheta_0}$ and therefore $\Phi_J(\chi):=\varphi^{J_0}$ is irreducible by the Clifford correspondence. Then, we define
$$\Phi_J\left(\chi^x\right):=\Phi_J(\chi)^x,$$ 
for every $\chi\in\mathbb{T}$ and $x\in \n_{A_0}(J)$. This defines an $\n_{A_0}(J)$-equivariant bijection $\Psi:\irr(J\mid \mathcal{S})\to\irr(J_0\mid \mathcal{S}_0)$. Furthermore, using Lemma \ref{lem:Bijections induced by triple isomorphisms} it's clear that $\Psi$ preserves the defect of characters if and only if so does $\Phi_J$.

Next, using the fact that $\left(A_\vartheta,K,\vartheta\right)\iso{K}\left(A_{0,\vartheta},K_0,\vartheta_0\right)$ together with Lemma \ref{lem:Bijections induced by triple isomorphisms}, we have
$$\left(A_{\vartheta,J_\vartheta,\psi},J_\vartheta,\psi\right)\iso{J_\vartheta}\left(A_{0,\vartheta,J_\vartheta,\psi},J_{0,\vartheta},\psi_0\right)$$
and, because $A_{\vartheta,J}\leq A_{\vartheta,J_{\vartheta}}$, we obtain
\begin{equation}
\label{eq:Constructing bijections over bijections 1}
\left(A_{\vartheta,J,\psi},J_\vartheta,\psi\right)\iso{J_\vartheta}\left(A_{0,\vartheta,J,\psi},J_{0,\vartheta},\psi_0\right).
\end{equation}
By hypothesis there exists a defect group $D$ of $\bl(\vartheta_0)$ such that $\c_A(D)\leq A_0$. Since $\bl(\chi_0)$ covers $\bl(\vartheta_0)$ we can find a defect group $Q$ of $\bl(\chi_0)$ such that $D\leq Q$. It follows that $\c_A(Q)\leq \c_A(D)\leq A_0$. Finally, we obtain
$$\left(A_{J,\chi},J,\chi\right)\iso{J}\left(A_{0,J,\chi},J_0, \Phi_J(\chi)\right)$$
by applying Proposition \ref{prop:Irreducible induction and double relation block} together with \eqref{eq:Constructing bijections over bijections 1}.
\end{proof}

We end this section with an elementary but useful observation. Suppose to have $N$-block isomorphic character triples and that $N\leq \wh{N}$. Under certain assumptions, it's possible to deduce that those character triples are in fact $\wh{N}$-block isomorphic.

\begin{lem}
\label{lem:Relation on triple wrt different groups}
Let $(H_1,M_1,\vartheta_1)\iso{N}(H_2,M_2,\vartheta_2)$ with $H_iN=G$. Suppose that $G\leq \wh{G}$ and let $N\leq \wh{N}\unlhd \wh{G}$ with $\wh{G}=G\wh{N}$ and $N=G\cap \wh{N}$. If $\c_{\wh{G}}(D_i)\leq G$ for some $D_i\in\delta(\bl(\vartheta_i))$, then $(H_1,M_1,\vartheta_1)\iso{\wh{N}}(H_2,M_2,\vartheta_2)$.
\end{lem}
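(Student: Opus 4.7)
The plan is to verify directly the four conditions of Definition \ref{def:N-block isomorphic character triples}, with $\wh{N}$ in place of $N$, reusing the same pair $(\mathcal{P}_1,\mathcal{P}_2)$ and the defect groups $D_i$ supplied by the hypothesis. Condition (i) is formal: from $H_iN=G$, $\wh{G}=G\wh{N}$ and $N=G\cap\wh{N}$ one obtains $H_i\wh{N}=\wh{G}$ and $H_i\cap\wh{N}=H_i\cap G\cap\wh{N}=H_i\cap N=M_i$. The natural isomorphism $G/N\to\wh{G}/\wh{N}$ identifies the canonical map $\wh{l}_i:H_i/M_i\to\wh{G}/\wh{N}$ with $l_i$, so the composite $\wh{i}:=\wh{l}_2^{-1}\wh{l}_1$ equals $i$, and the factor-set condition in (iii) is automatic.

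For condition (ii), defect groups of $\bl(\vartheta_i)$ are $M_i$-conjugate, so after an $M_i$-conjugation one may assume that the defect group furnished by the $N$-block isomorphism coincides with the $D_i$ from the hypothesis; then $\c_G(D_i)\leq H_i$ and $\c_{\wh{G}}(D_i)\leq G$, which together give $\c_{\wh{G}}(D_i)=\c_{\wh{G}}(D_i)\cap G=\c_G(D_i)\leq H_i$. Since $D_i\leq M_i\leq N\leq\wh{N}$, it follows that $\c_{\wh{G}}(\wh{N})\leq\c_{\wh{G}}(D_i)\leq H_i$ and in particular $\c_{\wh{G}}(\wh{N})\leq\c_G(N)$; the agreement of scalar functions on $\c_G(N)$ provided by the $N$-version then restricts to $\c_{\wh{G}}(\wh{N})$, completing (iii).

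The real work is in condition (iv). For $\wh{N}\leq J\leq\wh{G}$, set $J^*:=J\cap G$ and $J_i:=J\cap H_i$; by Dedekind $J=\wh{N}J^*$, and $J_i=J^*\cap H_i$ since $H_i\leq G$. The canonical isomorphism $J^*/N\to J/\wh{N}$ identifies the projective representations parametrising the two versions of the sigma bijection, so $\sigma_{J_1}^{(\mathcal{P}_1,\mathcal{P}_2)}$ built for $\wh{N}$ agrees on $\irr(J_1\mid\vartheta_1)$ with the analogous map $\sigma_{J^*_1}^{(\mathcal{P}_1,\mathcal{P}_2)}$ built for $N$. For $\psi\in\irr(J_1\mid\vartheta_1)$ and $\psi':=\sigma_{J_1}(\psi)$, the $N$-block isomorphism therefore yields $\bl(\psi)^{J^*}=\bl(\psi')^{J^*}$, and the plan is to deduce $\bl(\psi)^J=\bl(\psi')^J$ by transitivity of Brauer induction.

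The main obstacle is to guarantee that the $J$-induced blocks $\bl(\psi)^J$ and $\bl(\psi')^J$ are actually defined, which is precisely the place where the hypothesis $\c_{\wh{G}}(D_i)\leq G$ is used. Because $\psi$ lies over $\vartheta_1$, the block $\bl(\psi)$ covers $\bl(\vartheta_1)$, so a defect group $D$ of $\bl(\psi)$ can be chosen to contain (a conjugate of) $D_1$. Then $\c_{\wh{G}}(D)\leq\c_{\wh{G}}(D_1)\leq G$, so $\c_{\wh{G}}(D)=\c_G(D)\leq\c_G(D_1)\leq H_1$, and hence $\c_J(D)\leq J\cap H_1=J_1$. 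This is Brauer's sufficient criterion, so $\bl(\psi)^J$ exists; a defect group of $\bl(\psi)^{J^*}$ contains $D$ as well, so $(\bl(\psi)^{J^*})^J$ is defined and transitivity gives $\bl(\psi)^J=(\bl(\psi)^{J^*})^J$. The symmetric argument on the second side, using $D_2$, produces $\bl(\psi')^J=(\bl(\psi')^{J^*})^J$, and the lemma follows.
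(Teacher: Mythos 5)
Your proposal is correct and is exactly the verification the paper leaves implicit; the paper's own proof is the one-liner \emph{``This follows directly from Definition \ref{def:N-block isomorphic character triples}''}, and your argument is a careful unpacking of that assertion. The identifications you use --- $H_i\cap\wh{N}=M_i$, the modular law $J=\wh{N}(J\cap G)$ for $\wh{N}\leq J\leq\wh{G}$, and the resulting agreement of the $\sigma$-maps under the isomorphism $J^*/N\cong J/\wh{N}$ --- are the right ones, and the use of the hypothesis $\c_{\wh{G}}(D_i)\leq G$ to pin down the centralizer and reduce to the $N$-version (followed by transitivity of Brauer induction) is the intended mechanism. One small remark: once you have verified condition (ii) with the correct defect groups, the well-definedness of the block inductions in condition (iv) already follows from \cite[Lemma 3.5]{Spa17} (as noted in the definition), so the explicit Brauer-criterion computation you carry out for $\bl(\psi)^J$ is not strictly necessary, though it is not wrong.
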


\begin{proof}
This follows directly from Definition \ref{def:N-block isomorphic character triples}.
\end{proof}

\section{$N$-block isomorphic character triples and Glauberman correspondence}

The aim of this section is to prove Theorem \ref{thm:Really above the Glauberman correspondence with block relation with chains} which will be one of the main ingredients in the final proof. To prove this result, we need to extend the bijection given in \cite[Theorem 5.13]{Nav-Spa14I} to characters of positive height. This is done in Proposition \ref{prop:Above the Glauberman correspondence with block relation} for the case where the $D$-correspondence coincides with the Glauberman correspondence. Moreover, we obtain a canonical bijection.  

Let $N\unlhd G$ and $\vartheta\in\irr_G(N)$ such that $(o(\vartheta)\vartheta(1),|G:N|)=1$. We denote by $\vartheta^\diamond$ the canonical extension of $\vartheta$ to $G$, i.e. the unique extension of $\vartheta$ to $G$ such that $(o(\vartheta^\diamond),|G:N|)=1$ (see \cite[Corollary 8.16]{Isa76}). To prove Proposition \ref{prop:Above the Glauberman correspondence with block relation}, in addition to the argument developed in \cite[\S 5]{Nav-Spa14I}, we need the following result on the extendibility of the canonical extension. In what follows we will often use the following easy fact: if $H\leq G$ and $\chi\in\irr(G)$ such that $\chi_H\in\irr(H)$, then $o(\chi_H)$ divides $o(\chi)$.

\begin{lem}
\label{lem:Canonical extension}
Let $N$ be a normal $p'$-subgroup of $G$ and $P$ a $p$-subgroup of $G$ such that $K:=NP\unlhd G$. Consider $\mu\in\irr_G(N)$ and let $\mu^\diamond\in\irr_G(K)$. Then $\mu$ extends to $G$ if and only if $\mu^\diamond$ extends to $G$.
\end{lem}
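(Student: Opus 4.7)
The plan is to reduce, via Isaacs' extensibility criterion (Theorem~11.31 of \cite{Isa76}), to checking extendibility prime by prime, and then to handle each prime case separately.

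The backward direction is immediate: any extension of $\mu^\diamond$ to $G$ restricts on $N$ to $\mu^\diamond|_N=\mu$ and hence also extends $\mu$. For the forward direction, I would assume that $\mu$ extends to some $\tilde\mu\in\irr(G)$ and aim to extend $\mu^\diamond$ to $G_r$ for every prime $r$, where $G_r/K$ is a Sylow $r$-subgroup of $G/K$. When $r=p$, the index $|G_r{:}K|$ is a $p$-power, while $o(\mu^\diamond)\mu^\diamond(1)=o(\mu)\mu(1)$ is a $p'$-number (since $N$ is a $p'$-group), so \cite[Corollary~8.16]{Isa76} yields a canonical extension automatically and the case $r=p$ is done.

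The substantive case is $r\neq p$. Here $G_r/N$ has $K/N$ as its normal Sylow $p$-subgroup with coprime complement of order $|G_r/K|$, so Schur--Zassenhaus produces $N_r\leq G_r$ with $N\leq N_r$, $N_r\cap K=N$, $N_rK=G_r$ and $G_r/N=(K/N)\rtimes(N_r/N)$, with $N_r/N$ a Sylow $r$-subgroup of $G/N$. Since $\tilde\mu|_{N_r}$ extends $\mu$ to $N_r$ and $\mu^\diamond$ extends $\mu$ to $K$, one more application of Theorem~11.31 (this time inside $G_r$, whose quotient $G_r/N$ has only the primes $p$ and $r$) yields some $\eta\in\irr(G_r)$ extending $\mu$. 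By Gallagher's theorem $\eta|_K=\mu^\diamond\lambda$ for a unique $\lambda\in\lin(K/N)$, and $\lambda$ is $G_r$-invariant because $\eta|_K$ and $\mu^\diamond$ both are. I would then use the semidirect decomposition to define $\gamma:G_r/N\to\mathbb{C}^*$ by $\gamma(\bar k\bar n):=\lambda(\bar k)$ for $\bar k\in K/N$ and $\bar n\in N_r/N$, and verify directly (using that $\lambda$ factors through the abelianization of $K/N$ and is $N_r/N$-invariant) that $\gamma$ is a group homomorphism extending $\lambda$. Then $\eta\gamma^{-1}\in\irr(G_r)$ still extends $\mu$ and its restriction to $K$ equals $\mu^\diamond\lambda\cdot\lambda^{-1}=\mu^\diamond$, providing the desired extension of $\mu^\diamond$ to $G_r$.

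The main obstacle is to convert a given extension of $\mu$ into one whose restriction to $K$ is the \emph{canonical} extension $\mu^\diamond$ rather than some other extension of $\mu$. This amounts to extending the ``correction'' linear character $\lambda$ from $K/N$ to a suitable overgroup, which is not automatic for $G$-invariant linear characters of normal $p$-subgroups in general (as the $Q_8$-with-center example shows), but becomes available in the coprime setting $r\neq p$ precisely because Schur--Zassenhaus turns $G_r/N$ into a semidirect product.
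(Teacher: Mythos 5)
Your proof is correct, and it shares the paper's overall structure (reduce via Isaacs' criterion to Sylow subgroups of $G/K$, handle $r=p$ via the canonical extension, and for $r\neq p$ identify and then extend a Gallagher correction character $\lambda$ on $K/N$). The difference lies in how you extend $\lambda$: you invoke Schur--Zassenhaus to split $G_r/N=(K/N)\rtimes(N_r/N)$ and then hand-build a homomorphism $\gamma$ extending $\lambda$ using the semidirect decomposition and $N_r/N$-invariance of $\lambda$. The paper instead observes that $\lambda$ is a linear, $G$-invariant character of $K/N$, so $o(\lambda)$ is a $p$-power; since $|H:K|$ is a $q$-power with $q\neq p$, coprimality gives the canonical extension $\lambda^\diamond$ to $H$ directly via \cite[Corollary 8.16]{Isa76} --- the same tool as in the $r=p$ case --- without any splitting of the group. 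Two smaller points: (1) you reapply Theorem~11.31 inside $G_r$ to produce $\eta$, but this is unnecessary since $\widetilde\mu|_{G_r}$ already extends $\mu$ to $G_r$ and could serve as $\eta$; (2) the paper goes in the opposite multiplicative direction (writing $\mu^\diamond=\lambda\chi_K$ and producing $\lambda^\diamond\chi_H$ rather than twisting by $\gamma^{-1}$), which is purely cosmetic. Both arguments are sound; the paper's use of the canonical-extension machinery is the cleaner route, since it sidesteps Schur--Zassenhaus and the verification that $\gamma$ is a homomorphism entirely.
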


\begin{proof}
One implication is trivial. Notice that $\mu^\diamond$ is $G$-invariant since $\mu$ is $G$-invariant. Assume that $\mu$ has an extension $\chi\in\irr(G)$. We have to show that $\mu^\diamond$ extends to $H$, for every $H/K\in\syl_q(G/K)$ and every prime $q$. If $q=p$, then $\mu$ has a canonical extension to $H$, which is also an extension of $\mu^\diamond$.

Assume $q\neq p$ and consider $\lambda\in\irr(K/N)$ such that $\mu^\diamond=\lambda\chi_K$. Notice that, as $\mu^\diamond$ and $\chi_K$ are $G$-invariant, the character $\lambda$ is $G$-invariant. Since $K/N$ is a $p$-group and $H/K$ is a $q$-group, we deduce that $\lambda$ has a canonical extension $\lambda^\diamond$ to $H$. Then $\lambda^\diamond\chi_H$ is an extension of $\mu^\diamond$. This concludes the proof.
\end{proof}

\begin{hyp}
\label{hyp:Glauberman section}
Let $N$ be a normal $p'$-subgroup of $A$ and $P$ be a $p$-subgroup of $A$ such that $K:=NP\unlhd A$. Consider $\mu\in\irr_A(N)$ and its Glauberman correspondent $f_P(\mu)\in\irr_{A_P}(N_P)$. Let $\mu^\diamond\in\irr_A(K)$ and $f_P(\mu)^\diamond\in\irr_{A_P}(K_P)$ be the canonical extensions respectively of $\mu$ and $f_P(\mu)$.
\end{hyp}

Now, our aim is to obtain an adaptation of \cite[Proposition 5.12]{Nav-Spa14I} that includes canonical extensions (see Lemma \ref{lem:Canonical extension and values} below). This is done by proceeding as in \cite[\S 5]{Nav-Spa14I} and using Lemma \ref{lem:Canonical extension}.

\begin{lem}
\label{lem:Canonical extension and irreducible constituents}
Assume Hypothesis \ref{hyp:Glauberman section} and let $C$ be an abelian normal subgroup of $A$ with $C\leq \c_A(K)$. Suppose that $\mu^\diamond$ has an extension $\wt{\mu}$ to $A$. Then there exists an extension $\widetilde{f_P(\mu)}$ of $f_P(\mu)^\diamond$ to $A_P$ such that
$$\irr\left(\wt{\mu}_C\right)=\irr\left(\widetilde{f_P(\mu)}_C\right).$$
\end{lem}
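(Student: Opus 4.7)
The approach is to adapt \cite[Proposition 5.12]{Nav-Spa14I} to the canonical-extension setting, with Lemma~\ref{lem:Canonical extension} serving as the bridging tool. As a first step I would observe that $\wt{\mu}$ is automatically an extension of $\mu$ itself, since $\wt{\mu}_N = \mu^\diamond_N = \mu$. Applying \cite[Proposition 5.12]{Nav-Spa14I} to this extension produces an extension $\widehat{\psi}$ of $f_P(\mu)$ to $A_P$ satisfying $\irr(\wt{\mu}_C) = \irr(\widehat{\psi}_C)$. The remaining problem is to replace $\widehat{\psi}$ by an extension whose restriction to $K_P$ is precisely $f_P(\mu)^\diamond$, without disturbing the matching of $C$-constituents.

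To carry out this replacement, I would apply Lemma~\ref{lem:Canonical extension} once more, this time inside the pair $A_P \supseteq K_P = N_P P$: since $\widehat{\psi}$ witnesses that $f_P(\mu)$ extends to $A_P$, the lemma furnishes some extension $\widetilde{\varphi}_0$ of $f_P(\mu)^\diamond$ to $A_P$. Now both $\widehat{\psi}$ and $\widetilde{\varphi}_0$ extend $f_P(\mu)$, so they differ by a unique linear character $\lambda' \in \irr(A_P/N_P)$, and $\lambda'_{K_P}$ has $p$-power order since $K_P/N_P$ is a $p$-group. I would then seek a linear $\eta \in \irr(A_P/K_P)$ with $\eta_C = \lambda'_C$; setting $\widetilde{f_P(\mu)} := \eta \cdot \widetilde{\varphi}_0$ would then yield an extension of $f_P(\mu)^\diamond$ to $A_P$ satisfying $\irr(\widetilde{f_P(\mu)}_C) = \irr(\widehat{\psi}_C) = \irr(\wt{\mu}_C)$, as required.

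The main obstacle is producing the character $\eta$. This splits into a triviality requirement $\lambda'|_{C \cap K_P} = 1_{C \cap K_P}$ and an extendibility requirement from $CK_P/K_P$ to $A_P/K_P$. Since $[N_P,P]=1$ gives $K_P = N_P \times P$, one has $C \cap K_P = (C \cap N_P) \times (C \cap P) \subseteq \z(K_P)$. Triviality on $C \cap N_P$ is automatic ($\lambda'$ is trivial on $N_P$ by construction), while on the central $p$-subgroup $C \cap P$ one argues by comparing the central characters of $\widehat{\psi}_{K_P}$ and $f_P(\mu)^\diamond$, exploiting the $p'$-order of the canonical extension $f_P(\mu)^\diamond$ together with the matching of $C$-constituents transferred from $\wt{\mu}$ to $\widehat{\psi}$. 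Once triviality is established, $\lambda'_C$ factors through $C/(C \cap K_P) \cong CK_P/K_P$, and the existence of a suitable $\eta \in \irr(A_P/K_P)$ restricting to $\lambda'_C$ on $C$ can be secured by a Gallagher-type argument using that $C$ is abelian and normal in $A_P$ — the details of which closely parallel the analogous step in the proof of \cite[Proposition 5.12]{Nav-Spa14I}.
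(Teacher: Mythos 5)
Your opening moves are sound: observing that $\wt{\mu}$ also extends $\mu$, invoking \cite[Proposition 5.12]{Nav-Spa14I} to get $\widehat{\psi}$ extending $f_P(\mu)$ with matching $C$-constituents, pulling out $\widetilde{\varphi}_0$ from Lemma~\ref{lem:Canonical extension}, writing $\widehat{\psi}=\lambda'\widetilde{\varphi}_0$ by Gallagher, and reducing the problem to producing $\eta\in\irr(A_P/K_P)$ with $\eta_C=\lambda'_C$. Your argument that $\lambda'$ is trivial on $C\cap K_P=(C\cap N_P)\times(C\cap P)$ also goes through: triviality on $C\cap N_P$ is built in, and on $C\cap P$ it follows by comparing the $C\cap P$-constituent of $\widehat{\psi}$ (which equals that of $\wt{\mu}$, hence that of $\mu^\diamond$, hence trivial since $o(\mu^\diamond)$ is prime to $p$) with that of $f_P(\mu)^\diamond$ (trivial for the same reason).

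The gap is the final step, the existence of $\eta$. Knowing $\lambda'_{C\cap K_P}=1$ lets you define a well-posed linear character $\bar\lambda'$ of $CK_P/K_P$ agreeing with $\lambda'$ on $C$, and $A_P$-invariance of $\lambda'_C$ makes $\bar\lambda'$ invariant. But an invariant linear character of a normal subgroup need not extend. Concretely, $\bar\lambda'$ extends to $A_P/K_P$ iff it vanishes on $(CK_P\cap[A_P,A_P]K_P)/K_P$; since $\lambda'$ is a linear character of $A_P$ it is trivial on $[A_P,A_P]$ and on $N_P$, but you only get vanishing on $CK_P\cap[A_P,A_P]N_P$, not on $CK_P\cap[A_P,A_P]K_P$ --- the obstruction lives exactly in the possible non-triviality of $\lambda'$ on $K_P$, which you have not excluded. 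Calling this a ``Gallagher-type argument'' does not substantiate it: Gallagher's theorem governs irreducible constituents above a fixed character of a normal subgroup, not extendibility of a linear character under a triviality constraint on a second normal subgroup. In effect, the extendibility of $\lambda'_C$ to $\irr(A_P/K_P)$ is equivalent to the conclusion of the lemma (given your earlier steps), so the ``obstacle'' you flag is the entire content.

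The paper takes a structurally different route that never faces this obstacle. Instead of applying the non-canonical version of the result to $\mu$ and then repairing the restriction to $K_P$, it replaces $\mu$ from the start by $\kappa:=\wt{\mu}_{NC_{p'}}$, passes to the Glauberman correspondent $f_P(\kappa)$ (extendible to $A_P$ by Turull's theorems), and takes the canonical extension $f_P(\kappa)^\diamond$. Because $o(f_P(\kappa)^\diamond)$ is prime to $p$ and $|K_PC:N_PC_{p'}|$ is a $p$-power, the restriction of $f_P(\kappa)^\diamond$ to $K_P$ is already $f_P(\mu)^\diamond$, so the canonical-extension condition is built in rather than patched on afterward; the $C_{p'}$-constituent matches because $C_{p'}$ is central in $NC_{p'}$ and Glauberman restricts compatibly, and the $C_p$-constituents are both trivial by the order argument. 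You would need to either carry out this $\kappa$-construction, or supply a genuine proof (not merely a pointer to Proposition~5.12) that $\lambda'_C$ lies in the image of the restriction map $\irr(A_P/K_P)\to\irr(C)$.
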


\begin{proof}
Write $C_p:=\O_p(C)$ and $C_{p'}:=\O_{p'}(C)$ and set $\kappa:=\wt{\mu}_{NC_{p'}}$. Let $\kappa^\diamond$ be the canonical extension of $\kappa$ to $KC$. Since $\kappa$ extends to $A$, there exists an extension $\wt{\kappa}$ of $\kappa^\diamond$ to $A$ by Lemma \ref{lem:Canonical extension}. Observe that $\kappa^\diamond$ extends $\mu^\diamond$ and so does $\wt{\kappa}$. Now, by \cite[Corollary 6.17]{Isa76}, there exists a linear character $\eta\in\irr(A/K)$ such that $\wt{\mu}=\wt{\kappa}\eta$. Let $\lambda$ and $\lambda_1$ be the unique irreducible constituent respectively of $\wt{\mu}_C$ and of $\wt{\kappa}_C$. Then $\lambda=\lambda_1\eta_C$. Next, consider the $P$-Glauberman correspondent $f_P(\kappa)\in\irr((NC_{p'})_P)$ of $\kappa$ and let $f_P(\kappa)^\diamond$ be its canonical extension to $(KC)_P$. Using \cite[Theorem 6.5]{Tur08I} and \cite[Theorem 7.12]{Tur09}, as $\kappa$ extends to $A$, we conclude that $f_P(\kappa)$ extends to $A_P$. By Lemma \ref{lem:Canonical extension} there exists an extension $\wt{f_P(\kappa)}$ of $f_P(\kappa)^\diamond$ to $A_P$. As before, notice that $\wt{f_P(\kappa)}$ is an extension of $f_P(\mu)^\diamond$. Define $\wt{f_P(\mu)}:=\wt{f_P(\kappa)}\eta_{A_P}$. Since $K_P\leq \ker(\eta_{A_P})$, it follows that $\wt{f_P(\mu)}$ is an extension of $f_P(\mu)^\diamond$. If $\lambda'$ and $\lambda_1'$ are the unique irreducible constituents respectively of $\wt{f_P(\mu)}_C$ and $\wt{f_P(\kappa)}_C$, then $\lambda'=\lambda_1'\eta_C$. Therefore, in order to conclude, it is enough to show that $\lambda_1=\lambda_1'$. Write $\lambda_1=\lambda_{1,p}\times \lambda_{1,p'}$ and $\lambda_1'=\lambda_{1,p}'\times \lambda_{1,p'}'$, with $\lambda_{1,p},\lambda_{1,p}'\in\irr(C_p)$ and $\lambda_{1,p'},\lambda_{1,p'}'\in\irr(C_{p'})$. First, because $f_P(\kappa)$ is an irreducible constituent of $\kappa_{NC_{p'}}$ and $C_{p'}\leq \z(NC_{p'})$, it follows that
$$\irr\left(\wt{\kappa}_{C_{p'}}\right)=\irr\left(\kappa_{C_p'}\right)=\irr\left(f_P(\kappa)_{C_{p'}}\right)=\irr\left(\wt{f_P(\kappa)}_{C_{p'}}\right)$$
and therefore $\lambda_{1,p'}=\lambda_{1,p'}'$. Observe that $\wt{\kappa}_{N\times C_p}=(\kappa^\diamond)_{N\times C_p}=\mu\times \lambda_{1,p}$. Since $p$ does not divide $o(\kappa^\diamond)$, it follows that $p$ does not divide $o(\mu\times \lambda_p)$. In particular $(p,o(\lambda_p))=1$ and therefore $\lambda_{1,p}=1_{C_p}$. By the same argument, we obtain $\lambda_{1,p}'=1_{C_p}$. This shows that $\lambda_1=\lambda_1'$ and the proof is complete.
\end{proof}

Next, we extend Lemma \ref{lem:Canonical extension and irreducible constituents} to the case where $C$ is not necessarily abelian.

\begin{cor}
\label{cor:Canonical extension and irreducible constituents}
Assume Hypothesis \ref{hyp:Glauberman section} and suppose that $\mu^\diamond$ has an extension $\wt{\mu}$ to $A$. Then there exists an extension $\widetilde{f_P(\mu)}$ of $f_P(\mu)^\diamond$ to $A_P$ such that
$$\irr\left(\wt{\mu}_{\c_A(K)}\right)=\irr\left(\widetilde{f_P(\mu)}_{\c_A(K)}\right).$$
\end{cor}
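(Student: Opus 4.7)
My plan is to upgrade Lemma~\ref{lem:Canonical extension and irreducible constituents} from an abelian $C \leq \c_A(K)$ to the (possibly non-abelian) centralizer $\c_A(K)$ by combining a Schur's lemma reduction with the construction in that lemma applied to the characteristic subgroup $\O_{p'}(\c_A(K))$. The Schur reduction comes first. Since $\wt{\mu}_K = \mu^\diamond$ is irreducible and every $c \in \c_A(K)$ centralizes $K$, Schur's lemma applied to a representation of $A$ affording $\wt{\mu}$ forces $c$ to act as a scalar. Hence $\wt{\mu}_{\c_A(K)} = \mu(1)\omega$ for a single linear character $\omega$ of $\c_A(K)$ and so $\irr(\wt{\mu}_{\c_A(K)}) = \{\omega\}$. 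Noting that $\c_A(K) \leq \c_A(P) \leq A_P$ and that $\c_A(K)$ centralizes $K_P \leq K$ (on which $f_P(\mu)^\diamond$ is irreducible), the same argument gives $\wt{f_P(\mu)}_{\c_A(K)} = f_P(\mu)(1)\omega'$ for a linear character $\omega'$ of $\c_A(K)$, for any extension $\wt{f_P(\mu)}$ of $f_P(\mu)^\diamond$ to $A_P$. The corollary thus reduces to producing an extension with $\omega = \omega'$ as linear characters of $\c_A(K)$.

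\textbf{Construction.} I would then mimic the proof of Lemma~\ref{lem:Canonical extension and irreducible constituents} using $C_{p'} := \O_{p'}(\c_A(K))$ in place of the $p'$-part of an abelian $C$; this $C_{p'}$ is characteristic in $\c_A(K)$, hence normal in $A$. Then $N C_{p'}$ is a normal $p'$-subgroup of $A$ and, since $\wt{\mu}(1) = \mu(1)$, the restriction $\kappa := \wt{\mu}_{NC_{p'}}$ is irreducible (any constituent over the $A$-invariant $\mu$ has degree at least $\mu(1)$). Because $K \cap C_{p'} \leq \z(K)_{p'} \leq N$, the index $|KC_{p'}:NC_{p'}| = |K:N|$ is a $p$-power, so the canonical extension $\kappa^\diamond$ of $\kappa$ to $KC_{p'}$ exists. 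Lemma~\ref{lem:Canonical extension} lifts $\wt{\mu}$ (an extension of $\kappa$) to an extension $\wt{\kappa}$ of $\kappa^\diamond$ to $A$, and I write $\wt{\mu} = \wt{\kappa}\eta$ with $\eta \in \irr(A/K)$ linear. On the Glauberman side, the Turull results cited in the proof of Lemma~\ref{lem:Canonical extension and irreducible constituents} together with Lemma~\ref{lem:Canonical extension} furnish an extension $\wt{f_P(\kappa)}$ of $f_P(\kappa)^\diamond$ to $A_P$, and I set $\wt{f_P(\mu)} := \wt{f_P(\kappa)} \cdot \eta_{A_P}$.

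\textbf{Main obstacle.} By the Schur reduction applied to $\wt{\kappa}$ and $\wt{f_P(\kappa)}$, the task becomes showing $\lambda_1 = \lambda_1'$, where $\wt{\kappa}_{\c_A(K)} = \mu(1)\lambda_1$ and $\wt{f_P(\kappa)}_{\c_A(K)} = f_P(\mu)(1)\lambda_1'$. Restricted to $C_{p'}$, the endgame of Lemma~\ref{lem:Canonical extension and irreducible constituents} applies verbatim, yielding $\lambda_1|_{C_{p'}} = \lambda_1'|_{C_{p'}}$: the Glauberman correspondence preserves the scalar action on $C_{p'}$, and the $p'$-order of $\kappa^\diamond$ and $f_P(\kappa)^\diamond$ trivializes the relevant $p$-part. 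The difficulty absent in the abelian case is that $\c_A(K)$ need not equal $\O_p(\c_A(K)) \cdot C_{p'}$, so agreement on $C_{p'}$ does not automatically determine the two linear characters on the whole abelianization of $\c_A(K)$. To close this gap, I would additionally apply Lemma~\ref{lem:Canonical extension and irreducible constituents} to the abelian normal subgroup $\z(\c_A(K))$ of $A$, and exploit that the remaining freedom in the choice of $\wt{f_P(\mu)}$ is parametrized by the restrictions to $\c_A(K)$ of linear characters of $A_P/K_P$, so that a suitable adjustment of $\eta$ delivers $\lambda_1 = \lambda_1'$ on all of $\c_A(K)$. This last step — combining the two applications of Lemma~\ref{lem:Canonical extension and irreducible constituents} into a single consistent choice — is the part I expect to require the most care.
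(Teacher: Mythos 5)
Your opening Schur-lemma step --- that $\wt{\mu}_{\c_A(K)}=\mu(1)\omega$ for a single linear character $\omega$ of $C:=\c_A(K)$ --- is the same observation the paper makes via \cite[Lemma 2.27]{Isa76}. From there, though, you diverge, and your route has a gap that you partially flag yourself. The paper sets $C':=[C,C]$, notes $C'\leq\ker\omega\leq\ker\wt{\mu}$, and passes to $\overline{A}:=A/C'$, where $\overline{C}$ is abelian; by \cite[Lemma 5.10]{Nav-Spa14I} the Glauberman correspondence descends to $\overline{A}$, so Lemma~\ref{lem:Canonical extension and irreducible constituents} applies verbatim to $\overline{C}\leq\c_{\overline{A}}(\overline{K})$ and one inflates back. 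No $p/p'$-decomposition of $C$ is needed at all.

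Your plan instead re-runs the abelian argument on $C$ itself with $C_{p'}:=\O_{p'}(C)$ in the role of the $p'$-part of an abelian centralizer, and you rightly notice this is insufficient: the image of $\O_{p'}(C)$ in $C/[C,C]$ can be a proper subgroup of the $p'$-part of $C/[C,C]$, so matching $\omega$ and $\omega'$ on $\O_{p'}(C)$ does not determine them on the abelianization. Neither of your proposed remedies closes the gap. Applying Lemma~\ref{lem:Canonical extension and irreducible constituents} to $\z(C)$ adds nothing when $\z(C)$ is small; in fact both $\O_{p'}(C)$ and $\z(C)$ can be trivial while $C/[C,C]$ is not (take $C$ of $S_3$-type with $p=3$), so nothing at all constrains $\omega$ on the relevant $p'$-elements. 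And ``adjusting $\eta$'' requires that the discrepancy between $\omega$ and $\omega'$ lie in the image of the restriction map $\irr(A_P/K_P)\to\irr(C)$, which is precisely the kind of statement you are trying to prove, not something you may assume. The clean fix is the paper's quotient by $[C,C]$, which makes the target of Lemma~\ref{lem:Canonical extension and irreducible constituents} abelian by fiat and renders your case analysis unnecessary.
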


\begin{proof}
Set $C:=\c_A(K)$, $C':=[C,C]$ and $\overline{A}:=A/C'$. Since $\wt{\mu}_K$ is irreducible, as remarked before \cite[Definition 2.7]{Spa18}, we have $C\leq \z(\wt{\mu})$ and \cite[Lemma 2.27]{Isa76} implies that $\wt{\mu}_C=\mu(1)\lambda$, for some linear character $\lambda\in\irr(C)$. In particular $C'\leq \ker(\lambda)\leq \ker(\wt{\mu})$. It follows that $C'\cap K$ is contained in $\ker(\mu^\diamond)$ and $\ker(f_P(\mu)^\diamond)$ while $C'\cap N$ is contained in $\ker(\mu)$ and $\ker(f_P(\mu))$. Via the canonical isomorphism $\overline{N}\simeq N/C'\cap N$, we can identify $\mu$ with a character $\overline{\mu}$ of $\overline{N}$. Similarly we can consider $\overline{\mu^\diamond}$ as a character of $\overline{K}$, $\overline{f_P(\mu)}$ as a character of $\overline{N_P}$ and $\overline{f_P(\mu)^\diamond}$ as a character of $\overline{K_P}$. Notice that $\overline{A_P}=\overline{A}_{\overline{P}}$, $\overline{K_P}=\overline{K}_{\overline{P}}$ and $\overline{N_P}=\overline{N}_{\overline{P}}$. By \cite[Lemma 5.10]{Nav-Spa14I} the character $\overline{f_P(\mu)}$ coincides with the $\overline{P}$-Glauberman correspondent $f_{\overline{P}}(\overline{\mu})$ of $\overline{\mu}$. Moreover $\overline{\mu^\diamond}$ and $\overline{f_P(\mu)^\diamond}$ are the canonical extensions of $\overline{\mu}$ and of $\overline{f_P(\mu)}$. Applying Lemma \ref{lem:Canonical extension and irreducible constituents}, we find an extension $\psi$ of $\overline{f_P(\mu)^\diamond}$ to $\overline{A}_{\overline{P}}$ such that $\irr\left(\overline{\wt{\mu}}_{\overline{C}}\right)=\irr\left(\psi_{\overline{C}}\right)$, where $\overline{\wt{\mu}}$ is the character of $\overline{A}$ corresponding to $\wt{\mu}$ via inflation. Now the inflation $\widetilde{f_P(\mu)}\in\irr(A_P)$ of $\psi$ satisfies the required hypothesis.
\end{proof}

Recall that, if $\mathbf{R}$ is the ring of algebraic integers and $\mathbf{S}$ is the localization of $\mathbf{R}$ at some maximal ideal containing $p\mathbf{R}$, then $^*:\mathbf{S}\to\mathbb{F}$ denotes the canonical epimorphism, where $\mathbb{F}$ is the residue field of characteristic $p$ (see \cite[Chapter 2]{Nav98} for details).

\begin{lem}
\label{lem:Canonical extension and values}
Assume Hypothesis \ref{hyp:Glauberman section}. If $\mu^\diamond$ extends to $\wt{\mu}\in\irr(A)$, then there exists an extension $\wt{f_P(\mu)}$ of $f_P(\mu)^\diamond$ to $A_P$ such that 
$$\irr\left(\wt{\mu}_{\c_A(K)}\right)=\irr\left(\wt{f_P(\mu)}_{\c_A(K)}\right)$$
and
$$\wt{\mu}(x)^*=e\wt{f_P(\mu)}(x)^*,$$
for every $p$-regular $x\in A$ with $P\in \syl_p(\c_K(x))$, where $e:=[\mu_{N_P},f_P(\mu)]$.
\end{lem}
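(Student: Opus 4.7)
The plan is to take as $\wt{f_P(\mu)}$ the very extension produced by Corollary~\ref{cor:Canonical extension and irreducible constituents}, so that the constituent condition $\irr(\wt{\mu}_{\c_A(K)})=\irr(\wt{f_P(\mu)}_{\c_A(K)})$ holds by construction, and to verify that this specific choice also satisfies the mod-$p$ value congruence on the prescribed elements. Note that for any $p$-regular $x\in A$ with $P\in\syl_p(\c_K(x))$ we have $P\leq \c_A(x)$, so $x\in\c_A(P)\leq A_P$ and the congruence makes sense.

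For the value part I would open up the constructions in Lemma~\ref{lem:Canonical extension and irreducible constituents} and Corollary~\ref{cor:Canonical extension and irreducible constituents}. Reducing modulo $C':=[\c_A(K),\c_A(K)]$---which preserves character values because it is an inflation---brings us to the case where $C:=\c_A(K)$ is abelian; in that case $\wt{f_P(\mu)}$ is produced explicitly as $\wt{f_P(\kappa)}\,\eta_{A_P}$, where $\kappa:=\wt{\mu}_{NC_{p'}}$, $\wt{\kappa}$ is a chosen extension of the canonical extension $\kappa^\diamond$ to $A$, $\eta\in\irr(A/K)$ is the linear character with $\wt{\mu}=\wt{\kappa}\eta$, and $\wt{f_P(\kappa)}$ is the extension of the canonical extension of $f_P(\kappa)$ to $A_P$ supplied by Lemma~\ref{lem:Canonical extension}. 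Since $\eta(x)$ is a root of unity (hence a unit in $\mathbf{S}$) and cancels from both sides of the desired congruence, it suffices to verify
$$\wt{\kappa}(x)^*=e\cdot\wt{f_P(\kappa)}(x)^*$$
for $x$ as above, where $e=[\mu_{N_P},f_P(\mu)]=[\kappa_{N_PC_{p'}},f_P(\kappa)]$ is the Glauberman multiplicity (in particular coprime to $p$, so nonzero in $\mathbb{F}$).

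Finally I would invoke \cite[Proposition 5.12]{Nav-Spa14I} (or reproduce its argument) applied to the pair $(\kappa,f_P(\kappa))$ on the subgroup $NC_{p'}$, which has abelian $p'$-part $N$ normalized by $P$. The key point is that the extension $\wt{f_P(\kappa)}$ is not arbitrary: it is the extension obtained through Turull's transfer results \cite[Theorem 6.5]{Tur08I} and \cite[Theorem 7.12]{Tur09} already used in the proof of Lemma~\ref{lem:Canonical extension and irreducible constituents}, and it is this \emph{Glauberman-compatibility} that yields the Glauberman--Isaacs mod-$\mathfrak{p}$ value identity on $p$-regular elements whose centralizer in $K$ contains $P$ as a Sylow $p$-subgroup. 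The main obstacle is precisely that Lemma~\ref{lem:Canonical extension} and Corollary~\ref{cor:Canonical extension and irreducible constituents} only assert the existence of \emph{some} extension satisfying the constituent condition, whereas the value congruence requires the specific Glauberman-compatible extension produced in their proofs; to make the argument rigorous I must reopen those constructions and track how the Glauberman--Isaacs identity on $N$ propagates through canonical extension to $KC_{p'}$, then through the further extension to $A_P$, rather than merely quoting the previous statements as black boxes.
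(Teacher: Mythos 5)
Your starting point---invoking Corollary~\ref{cor:Canonical extension and irreducible constituents} to secure the constituent condition---is the same as the paper's, but from there the two arguments diverge fundamentally, and the gap you acknowledge at the end of your proposal is not a technical loose end but the entire substance of the proof. The extension produced by Corollary~\ref{cor:Canonical extension and irreducible constituents} is determined only up to multiplication by linear characters of $A_P$ trivial on $\c_A(K)K_P$, and nothing in its construction (which feeds through the non-canonical extension supplied by Lemma~\ref{lem:Canonical extension}, which in turn uses \cite[Theorem 6.26]{Isa76} only to show \emph{some} extension exists) pins down a choice for which the Glauberman--Isaacs value identity holds on the prescribed $p$-regular elements. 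Your hope that ``tracking the construction'' will reveal the right one is not realized, and your appeal to \cite[Proposition 5.12]{Nav-Spa14I} for the pair $(\kappa,f_P(\kappa))$ likewise only asserts existence of a good extension for $\kappa$, not that it coincides with the one Lemma~\ref{lem:Canonical extension} hands you.

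The paper's proof does something structurally different: rather than trying to show the extension $\chi$ from Corollary~\ref{cor:Canonical extension and irreducible constituents} already works, it \emph{corrects} $\chi$ by an explicitly constructed linear character $\wt{\xi}$ and sets $\wt{f_P(\mu)}:=\wt{\xi}\chi$. The correction factor $\xi$ is built pointwise on $\c_A(P)K_P$ by comparing $\chi_{K_P^{(x)}}$ with the canonical extension of the Glauberman correspondent $f_P(\wt{\mu}_{N^{(x)}})$ over each $p$-regular $x$; one must then verify that this gives a class function, that it is a genuine linear character (via Brauer's characterization, \cite[Corollary 8.12]{Isa76}), that it extends to $A_P$ (via \cite[Theorem 6.26]{Isa76} and \cite[Lemma 4.1]{Spa10}), and---crucially---that $\c_A(K)\leq\ker(\xi)$, so that multiplying by $\wt{\xi}$ does not destroy the constituent condition already achieved. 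This last step is done with block theory: $\bl(\wt{\mu}_{K^{(x)}})$ and $\bl(f_P(\wt{\mu}_{N^{(x)}})^\diamond)$ are Brauer correspondents, and comparing central characters $\lambda_B$ and $\lambda_{B'}\circ\Br_P$ on elements of $\c_A(K)$ forces $\xi$ to be trivial there. None of this machinery---the construction of $\xi$, the verification that it is a character, its extendibility, and the block-theoretic kernel computation---appears in your proposal; these are exactly the ideas needed, and without them the value congruence cannot be established for a specific extension compatible with the constituent condition.
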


\begin{proof}
By Lemma \ref{lem:Canonical extension and irreducible constituents} there exists an extension $\chi$ of $f_P(\mu)^\diamond$ that satisfies the first condition. In order to conclude, it is enough to find a linear character $\wt{\xi}\in\irr(A_P/\c_A(K)K_P)$ such that $\widetilde{f_P(\mu)}:=\wt{\xi}\cdot\chi$ satisfies the second condition.

First, we construct the linear character $\wt{\xi}$. Let $x$ be a $p$-regular element of $\c_A(P)K_P$, set $N^{(x)}:=N\langle x\rangle$, $K^{(x)}:=K\langle x \rangle$ and observe that $(N^{(x)})_P=(N_p)^{(x)}:=K_P\langle x\rangle$ and $(K^{(x)})_P=(K_p)^{(x)}:=K_P\langle x\rangle$. 
\begin{center}
\begin{tikzpicture}[x=1.5cm, y=1cm]
\node(N) at (0,0) {$N$};
\node(K) at (0,1) {$K$};
\node(Nx) at (1,1) {$N^{(x)}$};
\node(Kx) at (1,2) {$K^{(x)}$};
\node(NP) at (1,-1) {$N_P$};
\node(KP) at (1,0) {$K_P$};
\node(NPx) at (2,0) {$N_P^{(x)}$};
\node(KPx) at (2,1) {$K_P^{(x)}$};

\path[-]

(N) edge node {} (K)
(N) edge node {} (NP)
(N) edge node {} (Nx)
(K) edge node {} (KP)
(K) edge node {} (Kx)
(Nx) edge node {} (Kx)
(Nx) edge node {} (NPx)
(NP) edge node {} (KP)
(NP) edge node {} (NPx)
(KPx) edge node {} (Kx)
(KPx) edge node {} (KP)
(KPx) edge node {} (NPx);
\end{tikzpicture}
\end{center}
Since $x$ is $p$-regular, the subgroup $N^{(x)}$ has order coprime to $p$ and we can consider the $P$-Glauberman correspondent $f_P(\wt{\mu}_{N^{(x)}})$ of $\wt{\mu}_{N^{(x)}}$. Moreover $f_P(\wt{\mu}_{N^{(x)}})_{N_P}=f_P(\mu)$ by \cite[Theorem A]{Isa-Nav91}. Now, if $f_P(\wt{\mu}_{N^{(x)}})^\diamond$ is the canonical extension of $f_P(\wt{\mu}_{N^{(x)}})$ to $K_P^{(x)}$, then $(f_P(\wt{\mu}_{N^{(x)}})^\diamond)_{K_P}=f_P(\mu)^\diamond$. Since $\chi_{K_P^{(x)}}$ is another extension of $f_P(\mu)^\diamond$ to $K_P^{(x)}$, it follows that there exists a unique linear character $\xi^{(x)}\in\irr(K_P^{(x)}/K_P)$ such that $\xi^{(x)}\chi_{K_P^{(x)}}=f_P(\wt{\mu}_{N^{(x)}})^\diamond$. We define the map
\begin{align*}
\xi:\c_A(P)K_P&\to \mathbb{C}
\\
x &\mapsto \xi^{(x_{p'})}(x_{p'}).
\end{align*}
We claim that $\xi$ is a linear character of $\c_A(P)K_P$ with an extension $\wt{\xi}$ to $A_P$. To show that $\xi$ is an irreducible character we apply \cite[Corollary 8.12]{Isa76}. Clearly $\xi(1)=1$. Next, in order to show that $\xi$ is a class function we check that $\xi^{(x^n)}=(\xi^{(x)})^n$, for every $n\in A_P$ and every $p$-regular $x\in\c_A(P)K_P$. If this is the case, then
$$\xi(x^n)=\xi^{((x^n)_{p'})}((x^n)_{p'})=\xi^{((x_{p'})^n)}((x_{p'})^n)=(\xi^{(x_{p'})})^n((x_{p'})^n)=\xi^{(x_{p'})}(x_{p'})=\xi(x),$$
for every $x\in \c_A(P)K_P$ and $n\in A_P$. In particular $\xi$ is a class function. To prove the claim, just notice that $(\chi_{K_P^{(x)}})^n=\chi_{K_P^{(x^n)}}$ and that $(f_P(\wt{\mu}_{N^{(x)}})^\diamond)^n$ is the canonical extension of $f_P(\wt{\mu}_{N^{(x)}})^n=f_P(\wt{\mu}_{N^{(x^n)}})$, for every $n\in A_P$ and every $p$-regular $x\in\c_A(P)K_P$. Next, since $\xi^{(x)}=\xi^{(x^{-1})}$ for every $p$-regular $x\in\c_A(P)K_P$, we deduce that $\xi(x^{-1})=\xi^{-1}(x)$, for every $x\in \c_A(P)K_P$, and therefore $[\xi,\xi]=1$. Finally, fix $S\times T\leq \c_A(P)K_P$ with $S$ a $p$-group and $T$ a $p'$-group. Observe that $\xi_S=1_S$. On the other hand $\chi_{K_PT}$ and $f_P(\wt{\mu}_{NT})^\diamond$ are both extensions of $f_P(\mu)^\diamond$ and we can find a linear character $\lambda\in\irr(K_PT/K_P)$ such that $\lambda\chi_{K_PT}=f_P(\wt{\mu}_{NT})^\diamond$. Moreover, for every $x\in T$, we have $\left(f_P(\wt{\mu}_{NT})^\diamond\right)_{K_P^{(x)}}=f_P(\wt{\mu}_{N^{(x)}})^\diamond$ and therefore $\xi_T=\lambda_T$. It follows that $\xi_{S\times T}\in\mathbb{Z}\irr(S\times T)$ and hence $\xi$ is a linear character by \cite[Corollary 8.12]{Isa76}.

Next, we show that $\xi$ extends to $A_P$. To do so, we use \cite[Theorem 6.26]{Isa76}. Let $q$ be a prime dividing $o(\xi)$ and consider $S_q/\c_A(P)K_P\in\syl_q(A_P/\c_A(P)K_P)$. Noticing that every $p$-element $x$ of $\c_A(P)K_P$ is contained in $\ker(\xi)$, we deduce that $p$ does not divide $|\c_A(P)K_P:\ker(\xi)|$ and hence $q\neq p$. Let $Q\in\syl_q(A_P/N_P)$ such that $S_q=\c_A(P)K_PQ$ and define $Q_1:=Q\cap \c_A(P)K_P$ and $\xi_1:=\xi_{Q_1}$. By \cite[Lemma 4.1]{Spa10}, we deduce that $\xi$ extends to $A_P$ if and only if $\xi_{Q_1}$ extends to $Q$. We are going to check the latter condition. Because $Q_1\leq A_P$ we deduce that $NQ_1$ is a $P$-invariant $p'$-group and that $(NQ_1)_P=N_PQ_1=Q_1$. We also have $KQ_1=(NQ_1)\rtimes P$ and $(KQ_1)_P=K_PQ_1=Q_1P$. Now we can consider the $P$-Glauberman correspondent $f_P(\wt{\mu}_{NQ_1})$ and its canonical extension $f_P(\wt{\mu}_{NQ_1})^\diamond$ to $Q_1P$. By \cite[Theorem A]{Isa-Nav91} we have $f_P(\wt{\mu}_{NQ_1})_{N_P}=f_P(\mu)$ and so $(f_P(\wt{\mu}_{NQ_1})^\diamond)_{K_P}=f_P(\mu)^\diamond$. Using Lemma \ref{lem:Canonical extension}, we obtain an extension $\psi$ of $f_P(\wt{\mu}_{NQ_1})^\diamond$ to $(KQ)_P=K_PQ$. By Gallagher's theorem there exists a unique linear character $\nu\in\irr(K_PQ/K_P)$ such that $\chi_{K_PQ}\cdot\nu=\psi$. Finally, for every $x\in Q_1$, we have
\begin{align*}
\xi^{(x)}\chi_{K_P^{(x)}}&=f_P(\wt{\mu}_{N^{(x)}})^\diamond=(f_P(\wt{\mu}_{NQ_1})_{N_P^{(x)}})^\diamond
\\
&=(f_P(\wt{\mu}_{NQ_1})^\diamond)_{K_P^{(x)}}=(\psi_{PQ_1})_{K_P^{(x)}}
\\
&=\psi_{K_P^{(x)}}=\chi_{K_P^{(x)}}\nu_{K_P^{(x)}}
\end{align*}
and it follows that $\nu_{Q_1}=\xi_1$. This shows that $\nu_Q$ is an extension of $\xi_1$ to $Q$ and therefore $\xi$ extends to $S_q$. We conclude that $\xi$ has an extension $\wt{\xi}$ to $A_P$.

Define $\widetilde{f_P(\mu)}:=\wt{\xi}\chi$. By \cite[Theorem 2.6]{Nav-Spa14II} we deduce that
$$\wt{\mu}(x)^*=\wt{\mu}_{N^{(x)}}(x)^*=ef_P(\wt{\mu}_{N^{(x)}})(x)^*=e(\xi(x)\chi(x))^*=e\widetilde{f_P(\mu)}(x)^*,$$
for every $p$-regular $x\in E$ such that $P\in\syl_p(\c_K(x))$. 

It remains to show that $C:=\c_A(K)$ is contained in the kernel of $\xi$. First, observe that $\ker(\xi)$ contains $C':=[C,C]\leq \ker(\xi_C)$. Moreover, $\ker(\xi)$ contains every $p$-element of $C$. Since $C/C'$ is abelian it's enough to show that every $p$-regular element $x$ of $C$ lies in $\ker(\xi)$. By the Alperin argument we know that $B:=\bl(\wt{\mu}_{K^{(x)}})$ and $B':=\bl(f_P(\wt{\mu}_{N^{(x)}})^\diamond)$ are Brauer correspondents with $B$ covering $b:=\bl(\wt{\mu}_{N^{(x)}})=\{\wt{\mu}_{N^{(x)}}\}$ and $B'$ covering $b':=\bl(f_P(\wt{\mu}_{N^{(x)}}))=\{f_P(\wt{\mu}_{N^{(x)}})\}$. According to \cite[Theorem 4.14]{Nav98} it follows that $\lambda_B=\lambda_{B'}\circ \Br_P$. Since $x\in\c_A(K)$, we have $x\leq \z(K^{(x)})$ and hence
$$\lambda_B(x)=\lambda_B\left(\left(x^{K^{(x)}}\right)^+\right)=\lambda_{B'}\left(\left(x^{K^{(x)}}\cap \c_{K^{(x)}}(P)\right)^+\right)=\lambda_{B'}(x).$$ 
By \cite[Theorem 9.5]{Nav98}, we conclude that
$$\left(\frac{\wt{\mu}(x)}{\wt{\mu}(1)}\right)^*=\lambda_B(x)=\lambda_{B'}(x)=\left(\frac{f_P(\wt{\mu}_{N^{(x)}})(x)}{f_P(\wt{\mu}_{N^{(x)}})(1)}\right)^*=\left(\frac{\widetilde{f_P(\mu)}(x)}{\widetilde{f_P(\mu)}(1)}\right)^*.$$
As $\irr(\wt{\mu}_C)=\irr(\chi_C)$ and $x$ is $p$-regular, we obtain
$$\dfrac{\chi(x)}{\chi(1)}=\dfrac{\wt{\mu}(x)}{\wt{\mu}(1)}=\dfrac{\widetilde{f_P(\mu)}(x)}{\widetilde{f_P(\mu)}(1)}$$
and, in particular, $\xi(x)=1$. This concludes the proof.
\end{proof}

The following result extends the bijection given in \cite[Theorem 5.13]{Nav-Spa14I} (in the case where $K$ of \cite[Theorem 5.13]{Nav-Spa14I} is a $p'$-group) to characters of positive height. In this particular situation we obtain a canonical bijection.

\begin{prop}
\label{prop:Above the Glauberman correspondence with block relation}
Assume Hypothesis \ref{hyp:Glauberman section}. Then there exists a canonical $A_P$-equivariant bijection
\begin{align*}
\Psi_{\mu,P}:\irr(K\mid \mu)&\to\irr(K_P\mid f_P(\mu))
\\
\mu^\diamond\nu&\mapsto f_P(\mu)^\diamond\nu_{K_P},
\end{align*}
for every $\nu\in \irr(K/N)$. Moreover $\Psi_{\mu,P}$ preserves the defect of characters and
$$(A_\vartheta, K, \vartheta)\iso{K}(A_{P,\vartheta}, K_P, \Psi_{\mu,P}(\vartheta))$$
and
$$\c_{A}(D)\leq A_P,$$
for every $\vartheta\in\irr(K\mid \mu)$ and some defect group $D$ of $\hspace{2pt}\bl(\Psi_{\mu,P}(\vartheta))$.
\end{prop}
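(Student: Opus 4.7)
The plan is to define $\Psi_{\mu,P}$ by Gallagher's theorem and to establish the $K$-block isomorphism using Lemma~\ref{lem:Canonical extension and values}. Since $\mu^\diamond\in\irr(K)$ and $f_P(\mu)^\diamond\in\irr(K_P)$ extend $\mu$ and $f_P(\mu)$ respectively, Gallagher's theorem gives bijections $\irr(K/N)\to\irr(K\mid\mu)$, $\nu\mapsto\mu^\diamond\nu$, and $\irr(K_P/N_P)\to\irr(K_P\mid f_P(\mu))$, $\eta\mapsto f_P(\mu)^\diamond\eta$. Because $K=N\rtimes P$ and $K_P=N_P\rtimes P$ (with $N_P=\n_N(P)=\c_N(P)$ by coprime action), both quotients are canonically isomorphic to $P$, so restriction $\nu\mapsto\nu_{K_P}$ is a bijection $\irr(K/N)\to\irr(K_P/N_P)$. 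Thus $\Psi_{\mu,P}$ is a well-defined bijection; its $A_P$-equivariance follows from $A$-invariance of $\mu^\diamond$ (by uniqueness of the canonical extension applied to $\mu^a=\mu$), $A_P$-invariance of $f_P(\mu)^\diamond$, and the fact that restriction to $K_P$ commutes with the $A_P$-action on $\irr(K/N)$. Defect preservation is a direct computation: $(\mu^\diamond\nu)(1)_p=\nu(1)=(f_P(\mu)^\diamond\nu_{K_P})(1)_p$ while $|K|_p=|P|=|K_P|_p$, hence both sides have defect $d$ with $p^d=|P|/\nu(1)$. For the centralizer condition, observe that $P=\O_p(K_P)$ since $P$ is a normal Sylow $p$-subgroup of $K_P$, so every defect group $D$ of $\bl(\Psi_{\mu,P}(\vartheta))$ contains $P$, whence $\c_A(D)\leq\c_A(P)\leq A_P$.

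For the character triple isomorphism $(A_\vartheta,K,\vartheta)\iso{K}(A_{P,\vartheta},K_P,\chi)$ with $\chi:=\Psi_{\mu,P}(\vartheta)$, the plan is to build compatible projective representations $\mathcal{P}_1$ and $\mathcal{P}_2$ in two stages. The Frattini argument applied to $P\in\syl_p(K)$ yields $A=KA_P$ and $A_\nu=KA_{P,\nu}$, and hence canonical isomorphisms $A/K\simeq A_P/K_P$ and $A_\vartheta/K=A_\nu/K\simeq A_{P,\nu}/K_P=A_{P,\vartheta}/K_P$ (where $A_\vartheta=A_\nu$ since $\mu^\diamond$ is $A$-invariant and $A_{P,\vartheta}=A_{P,\nu}$ since $f_P(\mu)^\diamond$ is $A_P$-invariant). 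In the first stage, construct projective representations $\mathcal{M}\in\proj(A\mid\alpha)$ extending $\mu^\diamond$ and $\mathcal{M}'\in\proj(A_P\mid\alpha')$ extending $f_P(\mu)^\diamond$ whose factor sets match under $A/K\simeq A_P/K_P$ and such that $\mathcal{M}_{\c_A(K)}$ and $\mathcal{M}'_{\c_A(K)}$ are associated with the same scalar function. In the second stage, fix a projective representation $\mathcal{R}\in\proj(A_\nu\mid\beta)$ with $\mathcal{R}_K$ affording $\nu$, and set $\mathcal{P}_1:=\mathcal{R}\otimes\mathcal{M}_{A_\nu}$ and $\mathcal{P}_2:=\mathcal{R}_{A_{P,\nu}}\otimes\mathcal{M}'_{A_{P,\nu}}$; these afford $\vartheta=\mu^\diamond\nu$ on $K$ and $\chi=f_P(\mu)^\diamond\nu_{K_P}$ on $K_P$, and inherit matching factor sets together with scalar-function agreement on $\c_A(K)$ from $\mathcal{M}$ and $\mathcal{M}'$, since $\mathcal{R}$ contributes identically on both sides under the canonical isomorphism $A_\nu/K\simeq A_{P,\nu}/K_P$.

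Stage~1 is where Lemma~\ref{lem:Canonical extension and values} enters. As $o(\mu^\diamond)$ is coprime to $p$, pass to a central extension $\wh{A}$ of $A$ by a finite $p'$-subgroup $Z$ such that $\mathcal{M}$ lifts to an ordinary representation of $\wh{A}$ affording an extension $\wh{\mu}$ of the lift of $\mu^\diamond$. Since $Z$ is central and $p'$, the preimage $\wh{N}$ of $N$ is a normal $p'$-subgroup of $\wh{A}$ with $\wh{K}=\wh{N}P$, so Hypothesis~\ref{hyp:Glauberman section} applies to $(\wh{A},\wh{N},P)$. Lemma~\ref{lem:Canonical extension and values} then produces an ordinary extension $\widetilde{f_P(\mu)}$ of the lift of $f_P(\mu)^\diamond$ to $\wh{A_P}$ with $\irr(\wh{\mu}_{\c_{\wh{A}}(\wh{K})})=\irr(\widetilde{f_P(\mu)}_{\c_{\wh{A}}(\wh{K})})$ and with matching $*$-values on suitable $p$-regular elements. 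Descending through $Z$ yields $\mathcal{M}'$, and the shared linear constituents on $\c_{\wh{A}}(\wh{K})$ translate to the common scalar function on $\c_A(K)\leq A_P$. The block induction condition of Definition~\ref{def:N-block isomorphic character triples}(iv) then follows from Corollary~\ref{cor:Harris-Knorr}: for $K\leq J\leq A$, a character $\psi\in\irr(J_1\mid\vartheta)$ and its image $\sigma(\psi)\in\irr(J_2\mid\chi)$ lie over characters of $K$ and $K_P$ whose blocks are Brauer first main correspondents over $P$, so their induced blocks to $J$ coincide. I expect the main obstacle to be verifying the scalar-function compatibility on $\c_A(K)$, which is precisely the technical content of Lemma~\ref{lem:Canonical extension and values}; the remaining technicalities are arranging $Z$ to be coprime to $p$ and tracking projective factor sets through the passage to $\wh{A}$ and back.
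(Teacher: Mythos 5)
Your overall architecture closely mirrors the paper's: Gallagher plus canonical extensions for the bijection, Frattini to match the group-theoretic conditions, a central extension to convert a projective representation to a linear one, and Lemma~\ref{lem:Canonical extension and values} to match scalar functions on $\c_A(K)$. Those parts are essentially right. But there is one genuine gap and one technical point you flag but do not resolve.

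The genuine gap is the block induction condition, item (iv) of Definition~\ref{def:N-block isomorphic character triples}. You write that for $K\leq J\leq A$, $\psi\in\irr(J_1\mid\vartheta)$ and $\sigma(\psi)\in\irr(J_2\mid\chi)$ ``lie over characters of $K$ and $K_P$ whose blocks are Brauer first main correspondents over $P$, so their induced blocks to $J$ coincide,'' citing Corollary~\ref{cor:Harris-Knorr}. Harris--Kn\"orr gives a bijection $\Bl(\n_J(P)\mid\bl(\chi))\to\Bl(J\mid\bl(\vartheta))$, $B'\mapsto (B')^J$, but it says nothing about which block $\sigma(\psi)$ actually lands in: there may be several blocks of $\n_J(P)$ covering $\bl(\chi)$, and the content of condition (iv) is that the \emph{specific} map $\sigma$ built from $(\mathcal{P}_1,\mathcal{P}_2)$ sends $\psi$ to the member of the Harris--Kn\"orr fibre corresponding to $\bl(\psi)$. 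This is exactly what the paper verifies, and it is the hardest part of the proof: it is checked via the value criterion from the proof of \cite[Theorem~4.4]{Nav-Spa14I}, namely showing
\[
\left(\frac{|K|_{p'}\,\tr(\mathcal{S}(x))}{p^{\h(\vartheta)}\vartheta(1)_{p'}}\right)^*=\left(\frac{|K_P|_{p'}\,\tr(\mathcal{S}'(x))}{p^{\h(\Psi_{\mu,P}(\vartheta))}\Psi_{\mu,P}(\vartheta)(1)_{p'}}\right)^*
\]
for every $p$-regular $x\in A_{P,\vartheta}$ with $P\in\syl_p(\c_K(x))$. Establishing this requires the $*$-value equation from Lemma~\ref{lem:Canonical extension and values} together with \cite[Theorem~5.2(b)]{Nav-Spa14I} to control the multiplicity $e=[\mu_{N_P},f_P(\mu)]$. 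Your plan uses Lemma~\ref{lem:Canonical extension and values} only for the scalar-function agreement on $\c_A(K)$ and drops its value equation, which is precisely what makes the block induction work.

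The secondary point is the $p'$-central extension. Your claim is plausible: one can argue that $\mu^\diamond$ extends to the preimage in $A$ of any Sylow $p$-subgroup of $A/K$ (the canonical extension of $\mu$ to that preimage restricts to $\mu^\diamond$), so the obstruction class has $p'$-order and a cocycle with $p'$-order values can be chosen. But you explicitly defer this, and there is a second elision: after lifting to $\wh{A}$, one must arrange that the resulting linear character of $\wh{A}$ restricts to the \emph{canonical} extension of $\tau_{\wh{N}}$ on $\wh{K}$ before Lemma~\ref{lem:Canonical extension and values} can be invoked; the paper handles this by a replacement of $\mathcal{P}$ using Lemma~\ref{lem:Canonical extension}. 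The paper sidesteps the $p'$-cocycle issue entirely by allowing $S$ arbitrary and splitting $S=S_p\times S_{p'}$, then applying the Glauberman machinery with the enlarged $p$-group $Q=P_0\times S_p$ and $p'$-group $M=N_0\times S_{p'}$. Either route can be made rigorous, but as written, the block induction verification is missing and this is not a minor technicality but the core of the argument.
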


\begin{proof}
Since $K=N\rtimes P$ and $K_P=N_P\times P$ are $p$-nilpotent groups with Sylow $p$-subgroup $P$, $\mu$ is $K$-invariant and $f_P(\mu)$ is $K_P$-invariant, we have $\irr(K\mid \mu)=\irr(\bl(\mu^\diamond))=\{\mu^\diamond\nu\mid \nu\in\irr(K/N)\}$ and $\irr(K_P\mid f_P(\mu))=\irr(\bl(f_P(\mu)^\diamond))=\{f_P(\mu)^\diamond\nu\mid \nu\in\irr(K_P/N_P)\}$. Thus, we obtain a defect preserving $A_P$-quivariant bijection by setting
$$\Psi_{\mu,P}(\mu^\diamond\nu):=f_P(\mu)^\diamond\nu_{K_P},$$
for every $\nu\in\irr(K/N)$.
Furthermore, as $P$ is a common defect group of the two blocks $\bl(\vartheta)$ and $\bl(\Psi_{\mu,P}(\vartheta))$, the condition on defect groups is satisfied.

Consider $\mathcal{P}\in\proj(A\mid \alpha)$ a projective representation associated with $\mu^\diamond$ and observe that $\mathcal{P}$ is also associated with $\mu$. Let $\wh{A}$ be the central extension of $A$ defined by $\mathcal{P}$ and $\epsilon:\wh{A}\to A$ be the map given by $\epsilon(x,s):=x$, for every $x\in A$ and $s\in S$, with kernel $S:=\langle\alpha(x,y)\mid x,y\in A\rangle$. For $H\leq A$, set $\wh{H}:=\epsilon^{-1}(H)$. Since $\alpha$ is constant on $K$-cosets and $\alpha(1,1)=1$, the set $H_0:=\{(h,1)\mid h\in H\}$ is a subgroup of $\wh{A}$, whenever $H\leq K$. In this case let $\vartheta_0\in\irr(H_0)$ be the character corresponding to $\vartheta\in\irr(H)$ via the isomorphism $\epsilon_{H_0}:H_0\to H$. Moreover $\wh{H}=H_0\times S$ and we define $\wh{\vartheta}:=\vartheta_0\times 1_S\in\irr(\wh{H})$. Notice that $(\mu^\diamond)_0\in\irr(K_0)$ is the canonical extension of $\mu_0$ and that $\wh{\mu^\diamond}\in\irr(\wh{K})$ is the canonical extension of $\wh{\mu}$. Furthermore $f_P(\mu)_0=f_{P_0}(\mu_0)$ and $(f_P(\mu)^\diamond)_0$ is its canonical extension. As no confusion arise, we just write $\mu^\diamond_0$ (resp. $f_P(\mu)^\diamond_0$) instead of $(\mu^\diamond)_0=(\mu_0)^\diamond$ (resp. $(f_P(\mu)^\diamond)_0=f_{P_0}(\mu_0)^\diamond$).

Recall that the map defined by $\wh{\mathcal{P}}(x,s):=s\mathcal{P}(x)$, for all $(x,s)\in \wh{A}$, is an irreducible representation of $\wh{A}$ affording an extension $\tau$ of $\mu^\diamond_0$. Set $S_p:=\O_p(S)$, $S_{p'}:=\O_{p'}(S)$, $M:=N_0\times S_{p'}$, $Q:=P_0\times S_p$, and notice that $\wh{K}=M\rtimes Q$, $M_Q=(N_P)_0\times S_{p'}$ and $\wh{K}_Q=\wh{K_P}$. Let $\varphi:=\tau_M\in\irr_{\wh{A}}(M)$ and consider its canonical extension $\varphi^\diamond\in\irr(\wh{K})$. By Lemma \ref{lem:Canonical extension}, there exists an extension $\wt{\varphi}$ of $\varphi^\diamond$ to $\wh{A}$. Since $\wt{\varphi}_{K_0}$ is an extension of $\mu_0$ with $o(\wt{\varphi}_{K_0})$ dividing $o(\varphi^\diamond)$, we deduce that $\wt{\varphi}_{K_0}=\mu^\diamond_0$. Now, if $\wh{\mathcal{R}}$ is an irreducible representation of $\wh{A}$ affording $\wt{\varphi}$, then $\mathcal{R}(x):=\wh{\mathcal{R}}(x,1)$ defines a projective representation of $A$ associated with $\mu^\diamond$. Replacing $\mathcal{P}$ with $\mathcal{R}$, we may assume that $\tau$ extends $\varphi^\diamond$.

Now, Lemma \ref{lem:Canonical extension and values} yields an extension $\wt{f_Q(\varphi)}$ of $f_Q(\varphi)^\diamond$ to $\wh{A}_Q=\wh{A_P}$ such that 
\begin{equation}
\label{eq:Canonical bijection consituents}
\irr\left(\wt{\varphi}_{\c_{\wh{A}}(\wh{K})}\right)=\irr\left(\wt{f_Q(\varphi)}_{\c_{\wh{A}}(\wh{K})}\right)
\end{equation}
and
\begin{equation}
\label{eq:Canonical bijection values}
\wt{\varphi}(x)^*=e\wt{f_Q(\varphi)}(x)^*,
\end{equation}
for every $p$-regular $x\in \wh{A}$ such that $Q\in\syl_p(\c_{\wh{K}}(x))$, where $e:=[\varphi_{M_Q},f_Q(\varphi)]$. Observe that, by \cite[Theorem A]{Isa-Nav91} and using the fact that $S_p\leq \z(\wh{A})$, we have $\wt{f_Q(\varphi)}_{(N_P)_0}=f_{P_0}(\mu_0)$ and $\wt{f_Q(\varphi)}_{(K_P)_0}=f_{P_0}(\mu_0)^\diamond$.

Let $\wh{\mathcal{P}}'$ be an irreducible representation of $\wh{A_P}$ affording $\wt{f_Q(\varphi)}$ and consider the projective representation $\mathcal{P}'$ of $A_P$ defined by $\mathcal{P}'(x):=\wh{\mathcal{P}}'(x,1)$, for every $x\in A_P$. Notice that $\mathcal{P}'$ is associated with $f_P(\mu)^\diamond$ and that its factor set coincides with $\alpha_{A_P\times A_P}$. Furthermore, as $\c_{\wh{A}}(\wh{K})=\wh{\c_A(K)}$ and by \eqref{eq:Canonical bijection consituents}, we deduce that $\mathcal{P}_{\c_A(K)}$ and $\mathcal{P}'_{\c_A(K)}$ are associated with the same scalar function

Next, let $\vartheta=\mu^\diamond\nu\in\irr(K\mid \mu)$, with $\nu\in\irr(K/N)$, and observe that $A_\vartheta=A_\nu$. Let $\mathcal{Q}$ be a projective representation of $A_\nu$ associated with $\nu$ and notice that $\mathcal{Q}_{A_{P,\nu}}$ is a projective representation of $A_{P,\nu}$ associated with $\nu_{K_P}$. It follows that $\mathcal{S}:=\mathcal{P}_{A_\nu}\otimes \mathcal{Q}$ is a projective representation of $A_\nu$ associated with $\vartheta$, while $\mathcal{S}':=\mathcal{P}'_{A_{P,\nu}}\otimes \mathcal{Q}_{A_{P,\nu}}$ is a projective representation of $A_{P,\nu}$ associated with $\Psi_{\mu,P}(\vartheta)=f_P(\mu)^\diamond\nu_{K_P}$. We claim that $(A_\vartheta, K,\vartheta)\iso{K} (A_{P,\vartheta}, K_P, \Psi_{\mu,P}(\vartheta))$ via $(\mathcal{S},\mathcal{S}')$. By the previous paragraph, one can easily check that the group theoretical conditions hold, that the factor set of $\mathcal{S}'$ coincides with the restriction of the factor set of $\mathcal{S}$ and that $\mathcal{S}_{\c_{A_\nu}(K)}$ and $\mathcal{S}'_{\c_{A_\nu}(K)}$ are associated with the same scalar function. To conclude, it remains to check the condition on block induction. By the proof of \cite[Theorem 4.4]{Nav-Spa14I} it's enough to show that
$$\left(\frac{|K|_{p'}\tr(\mathcal{S}(x))}{p^{\h(\vartheta)}\vartheta(1)_{p'}}\right)^*=\left(\frac{|K_P|_{p'}\tr(\mathcal{S}'(x))}{p^{\h(\Psi_{\mu,P}(\vartheta))}\Psi_{\mu,P}(\vartheta)(1)_{p'}}\right)^*,$$
for every $p$-regular $x\in A_{P,\vartheta}$ such that $P\in\syl_p(\c_K(x))$. Fix a $p$-regular element $x\in A_{P,\vartheta}$ with $P\in\syl_p(\c_K(x))$. Then $Q\in\syl_p(\c_{\wh{K}}(x,1))$ and \eqref{eq:Canonical bijection values} implies
$$\tr\left(\mathcal{S}(x)\right)^*=\wt{\varphi}(x,1)^*\tr(\mathcal{Q}(x))^*=\left(e\wt{f_Q(\varphi)}(x,1)\right)^*\tr(\mathcal{Q}(x))^*=e^*\tr\left(\mathcal{S}'(x)\right)^*.$$
As $e=[\mu_{N_P},f_P(\mu)]$ and by \cite[Theorem 5.2 (b)]{Nav-Spa14I}, we obtain 
$$\vartheta(1)_{p'}=\mu(1)\equiv [\mu_{N_P},f_P(\mu)]|N:N_P|f_P(\mu)(1)\equiv e\frac{|K|_{p'}}{|K_P|_{p'}}\Psi_{\mu,P}(\vartheta)(1)_{p'}\pmod{p}$$
and therefore
$$\left(\frac{|K|_{p'}\tr(\mathcal{S}(x))}{p^{\h(\vartheta)}\vartheta(1)_{p'}}\right)^*=\left(\frac{e|K|_{p'}\tr(\mathcal{S}'(x))}{\nu(1)\vartheta(1)_{p'}}\right)^*=\left(\frac{|K_P|_{p'}\tr(\mathcal{S}'(x))}{p^{\h(\Psi_{\mu,P}(\vartheta))}\Psi_{\mu,P}(\vartheta)(1)_{p'}}\right)^*.$$
Now the proof is complete.
\end{proof}

As a consequence, applying Proposition \ref{prop:Constructing bijections over bijections} and Proposition \ref{prop:Above the Glauberman correspondence with block relation}, for $N\leq J\leq A$ we obtain a defect preserving $A_{P,J}$-equivariant bijection
$$\Phi:\irr(J\mid \mu)\to \irr(J_P\mid f_P(\mu))$$
such that
$$\left(A_{J,\chi}, J, \chi\right)\iso{J}\left(A_{J,P,\chi},J_P,\Phi(\chi)\right),$$
for every $\chi\in\irr(J\mid \mu)$. Finally, we obtain the main result of this section by considering a normal $p$-chain $\d$ with last term $P$ and $J=NG_\d$.

\begin{theo}
\label{thm:Really above the Glauberman correspondence with block relation with chains}
Let $N\leq G\unlhd A$, with $N\unlhd A$ a $p'$-subgroup, and consider a normal $p$-chain $\d$ of $G$ with final term $P$. Let $\mu\in\irr_A(N)$ and $f_P(\mu)\in\irr(N_P)$ be its $P$-Glauberman correspondent. Then there exists a defect preserving $A_\d$-equivariant bijection
$$\Phi_{\mu,\d}:\irr(NG_\d\mid \mu)\to\irr(G_\d\mid f_P(\mu))$$
such that
$$\left(NA_{\d,\chi},NG_\d,\chi\right)\iso{G}\left(A_{\d,\chi},G_\d,\Phi_{\mu,\d}(\chi)\right),$$
for every $\chi\in\irr(NG_\d\mid \mu)$.
\end{theo}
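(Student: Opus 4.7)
The plan is to apply Proposition \ref{prop:Above the Glauberman correspondence with block relation} inside the ambient group $NA_\d$, then to promote the resulting bijection to characters of $NG_\d$ by means of Proposition \ref{prop:Constructing bijections over bijections}, and finally to upgrade the relation $\iso{NG_\d}$ thus produced to $\iso{G}$ via Lemma \ref{lem:Relation on triple wrt different groups}.

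First, I would collect some elementary group-theoretic identities. Since $N$ is a $p'$-group and every $D_i\leq D_n=P$, the subgroup $N_P=\c_N(P)$ normalises each $D_i$, so $N_P\leq G_\d\leq A_\d\leq A_P$. It follows that $NP\unlhd NA_\d$, $NP\cap A_\d=N_PP$, $NG_\d\cap A_\d=G_\d$, and $\mu$ is $NA_\d$-invariant. The key observation is that for every $p$-subgroup $Q$ of $G$ with $P\leq Q$ one has $\c_A(Q)\leq A_\d$: any element centralising $Q$ centralises each $D_i\leq P\leq Q$ and therefore normalises each $D_i$.

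Proposition \ref{prop:Above the Glauberman correspondence with block relation} applied inside $NA_\d$ with $K=NP$ then provides a defect preserving $A_\d$-equivariant bijection $\Psi_{\mu,P}\colon\irr(NP\mid \mu)\to \irr(N_PP\mid f_P(\mu))$ carrying the $NP$-block isomorphism of triples, with $P$ as common defect group satisfying $\c_{NA_\d}(P)\leq A_\d$ by the observation above. Feeding this into Proposition \ref{prop:Constructing bijections over bijections}, applied with ambient group $NA_\d$, normal subgroup $NP$, the subgroup $A_\d$ in the role of $A_0$, and $J=NG_\d$, then yields the desired defect preserving $A_\d$-equivariant bijection
$$\Phi_{\mu,\d}\colon\irr(NG_\d\mid \mu)\to\irr(G_\d\mid f_P(\mu))$$
satisfying $(NA_{\d,\chi},NG_\d,\chi)\iso{NG_\d}(A_{\d,\chi},G_\d,\Phi_{\mu,\d}(\chi))$ and producing a defect group $Q\supseteq P$ of $\bl(\Phi_{\mu,\d}(\chi))$. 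The identification of the character sets uses Clifford's theorem together with the invariance of $\mu$ and $f_P(\mu)$, while the stabiliser computation $(NA_\d)_{NG_\d,\chi}=NA_{\d,\chi}$ follows because $A_\d$ already normalises $NG_\d$.

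It remains to upgrade $\iso{NG_\d}$ to $\iso{G}$, which I would do by invoking Lemma \ref{lem:Relation on triple wrt different groups} with the role of $N$ of that lemma played by $NG_\d$, the role of $\wh{N}$ by $G$, and the roles of $G$ and $\wh{G}$ by $NA_{\d,\chi}G_\d$ and $A_{\d,\chi}G$ respectively. The identity $NG_\d=NA_{\d,\chi}G_\d\cap G$ reduces to $A_{\d,\chi}\cap G=G_\d$, which holds because $\chi\in\irr(NG_\d)$ is automatically invariant under $G_\d\leq NG_\d$. The centralizer hypothesis $\c_{A_{\d,\chi}G}(Q)\leq NA_{\d,\chi}G_\d$ follows from the stronger fact $\c_A(Q)\leq A_\d$ established in the first step: writing $z=ag\in A_\d$ with $a\in A_{\d,\chi}$ and $g\in G$, one obtains $g=a^{-1}z\in A_\d\cap G=G_\d$, so $z\in A_{\d,\chi}G_\d$. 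The main obstacle, in my view, is the careful bookkeeping to align ambient groups and stabilisers across the three auxiliary results; once the containment $\c_A(Q)\leq A_\d$ has been exploited at both the third and fourth steps, the remainder is mechanical.
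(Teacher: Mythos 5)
Your proof is correct and follows essentially the same route as the paper's: reduce to the ambient group $NA_\d$ so that $NP\unlhd NA_\d$, apply Proposition~\ref{prop:Above the Glauberman correspondence with block relation} with $K=NP$, push the bijection up to $J=NG_\d$ via Proposition~\ref{prop:Constructing bijections over bijections}, and finally enlarge $\iso{NG_\d}$ to $\iso{G}$ with Lemma~\ref{lem:Relation on triple wrt different groups}. The paper's one-line proof is exactly this chain of applications with the preliminary reduction stated as a ``WLOG $K\unlhd A$''; your write-up supplies the group-theoretic identifications (such as $NP\cap A_\d=N_PP$, $NG_\d\cap A_\d=G_\d$, $(NA_\d)_{NG_\d,\chi}=NA_{\d,\chi}$ and $\c_A(Q)\leq A_\d$ for $P\leq Q$) that the paper leaves implicit.
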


\begin{proof}
Let $K:=NP$ and observe that, without loss of generality, we may assume $K\unlhd A$. Now the result follows from Proposition \ref{prop:Constructing bijections over bijections}, Proposition \ref{prop:Above the Glauberman correspondence with block relation} and Lemma \ref{lem:Relation on triple wrt different groups}.
\end{proof}

\section{$N$-block isomorphic character triples and Fong correspondence}
\label{sec:Fong}

In this section, we show that the Fong correspondence \cite{Fon61} can be used to construct $N$-block isomorphic character triples. For completeness, we state the Fong correspondence in the form we need.

\begin{hyp}
\label{hyp:Fong correspondence}
Let $N$ be a normal $p'$-subgroup of $A$ and consider $\mu\in\irr_A(N)$. Let $\mathcal{P}\in{\proj(A\mid \alpha)}$ be a projective representation associated with $(A,N,\mu)$ such that $\alpha(x,y)^{|N|^2}=1$, for every $x,y\in A$ (see \cite[Theorem 3.5.7]{Nag-Tsu89}), and denote by $\wh{A}$ the $p'$-central extension of $A$ by $S:=\langle\alpha(x,y)\mid x,y\in A\rangle$ defined by $\mathcal{P}$ (see \cite[Section 5.3]{Nav18}). Let $\epsilon:\wh{A}\to A$ be the epimorphism given by $\epsilon(x,s):=x$, for every $x\in A$ and $s\in S$, and consider $N_0:=\{(n,1)\mid n\in N\}\unlhd \wh{A}$. For every $X\leq A$, set $\wh{X}:=\epsilon^{-1}(X)$ and $\wt{X}:=\wh{X}N_0/N_0$. Consider the irreducible representation $\wh{\mathcal{P}}$ of $\wh{A}$ defined by $\wh{\mathcal{P}}(x,s):=s\mathcal{P}(x)$, for every $x\in A$ and $s\in S$, and denote its character by $\tau$. Let $\wh{\lambda}\in\irr(\wh{N})$ be the linear character defined by $\wh{\lambda}(n,s):=s^{-1}$, for every $n\in N$ and $s\in S$, and set $\wh{\mu}:=\mu_0\times 1_S\in\irr(\wh{N})$, where $\mu_0$ correspond to $\mu$ via the isomorphism $N\simeq N_0$. Notice that $\tau$ extends $\wh{\mu}\wh{\lambda}^{-1}$. Finally, denote by $\wt{\mu}$ the character $\wh{\lambda}$ viewed as a character of $\wt{N}=\wh{N}/N_0$, that is $\wt{\mu}(N_0(n,s))=s^{-1}$, for every $n\in N$ and $s\in S$.
\end{hyp}

\begin{theo}[Fong]
\label{thm:Fong correspondence}
Assume Hypothesis \ref{hyp:Fong correspondence}. If $N\leq H\leq A$, then:
\begin{enumerate}
\item $\wt{H}$ is a $p'$-central extension of $H/N$ by the central $p'$-subgroup $\wt{N}\simeq S$;
\item There exists a bijection
\begin{align*}
\Bl(H\mid \bl(\mu))&\to \Bl(\wt{H}\mid \bl(\wt{\mu}))
\\
B &\mapsto \wt{B}
\end{align*}
\item Let $D\in\delta(B)$ and consider $Q\in\syl_p(\wh{D})$ so that $\wh{D}=Q\times S$. Then $QN_0/N_0\in\delta(\wt{B})$. In particular $B$ and $\wt{B}$ have isomorphic defect groups;
\item For every $B\in\Bl(H\mid \bl(\mu))$ corresponding to $\wt{B}\in\Bl(\wt{H}\mid \bl(\wt{\mu}))$ via the bijection in (ii), there exists a defect preserving bijection
\begin{align*}
\irr(B)&\to \irr(\wt{B})
\\
\psi &\mapsto \wt{\psi}
\end{align*}
such that, if $\wh{\psi}$ is the inflation to $\wh{H}$ of the character of $\wh{H}/S\simeq H$ corresponding to $\psi$ and $\wt{\psi}'$ is the inflation to $\wh{H}$ of $\wt{\psi}$, then $\wh{\psi}=\tau_{\wh{H}}\wt{\psi}'$;
\item For $\wh{x}\in \wh{A}$ set $x:=\epsilon(\wh{x})$ and $\wt{x}:=N_0\wh{x}$. Then $\wt{\psi^x}=\left(\wt{\psi}\right)^{\wt{x}}$ and $\wt{B^x}=\left(\wt{B}\right)^{\wt{x}}$, for every $B\in{\Bl(G\mid \bl(\mu))}$ and $\psi\in\irr(B)$.
\end{enumerate}
\end{theo}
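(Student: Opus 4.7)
For (i), the key group-theoretic fact is that the factor set $\alpha$ is trivial on $(A\times N)\cup(N\times A)$, because $\mathcal{P}$ restricts to an ordinary representation of $N$ affording $\mu$ and satisfies $\mathcal{P}(xn)=\mathcal{P}(x)\mathcal{P}(n)$, $\mathcal{P}(nx)=\mathcal{P}(n)\mathcal{P}(x)$ for $x\in A$, $n\in N$. Hence $N_0\trianglelefteq \wh{A}$ and $\wh{N}=N_0\times S$, so $\wt{N}=\wh{N}/N_0\cong S$ is central in $\wt{A}$ of order coprime to $p$, and the map $\epsilon$ descends to $\wt{H}\cong \wh{H}/N_0\twoheadrightarrow H/N$ with kernel $\wt{N}$.

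For (iv), Clifford theory applied to the $A$-invariant character $\mu$ gives, for each $\psi\in\irr(H\mid \mu)$, a unique projective representation $\mathcal{R}\in\proj(H/N\mid \alpha_{H\times H}^{-1})$ with $\psi=\tr(\mathcal{R}\otimes \mathcal{P}_H)$. Define
$$\wt{\psi}'(x,s):=s^{-1}\tr(\mathcal{R}(xN))\qquad \text{for } (x,s)\in\wh{H}.$$
Cancellation of $\alpha$ with $\alpha^{-1}$ and the fact that $\alpha(x,n)=\alpha(n,x)=1$ for $n\in N$ imply that $\wt{\psi}'$ is an irreducible character of $\wh{H}$ trivial on $N_0$, hence descends to $\wt{\psi}\in\irr(\wt{H})$. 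A direct pointwise evaluation yields $\wh{\psi}=\tau_{\wh{H}}\cdot \wt{\psi}'$, and the inverse map is obtained by reading $\psi$ off from the same identity starting from $\wt{\psi}$. Defect preservation follows because $|S|$ and $|N_0|$ are coprime to $p$, so $|\wt{H}|_p=|H|_p$, and $\wt{\psi}(1)=\psi(1)/\mu(1)$ with $\mu(1)$ a $p'$-number.

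For (ii) and (iii), the block bijection is induced by the character bijection from (iv); the main expected obstacle is showing that $\psi_1,\psi_2\in\irr(B)$ forces $\wt{\psi_1}$ and $\wt{\psi_2}$ into a common block of $\wt{H}$. My approach would be to compare central characters on $p$-regular class sums: blocks of $H$ correspond to blocks of $\wh{H}$ lying over $1_S$ (inflation through the central $p'$-subgroup $S$), blocks of $\wt{H}$ correspond to blocks of $\wh{H}$ lying over $\wh{\lambda}|_S$ (quotient by the central $p'$-subgroup $N_0$), and the identity $\wh{\psi}=\tau_{\wh{H}}\wt{\psi}'$ transforms central characters of $\wh{H}$ compatibly between these two, since $\tau_{\wh{H}}$ is a common fixed character. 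For (iii), any defect group $D$ of $B$ lifts to $\wh{D}\leq \wh{H}$ of order $|D|\cdot|S|$; because $S$ is central of order coprime to $|D|$, we have $\wh{D}=Q\times S$ with $Q\in\syl_p(\wh{D})$ and $Q\cong D$, and then $QN_0/N_0\cong Q$ is a defect group of $\wt{B}$ by the defect-preservation from (iv). Finally, (v) is immediate: conjugation by $\wh{x}\in \wh{A}$ commutes with both inflation and the quotient by $N_0$ and preserves $\wh{\psi}=\tau_{\wh{H}}\wt{\psi}'$ since $\tau$ is an ordinary character of $\wh{A}$, giving equivariance of both the character and block bijections.
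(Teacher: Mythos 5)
Your constructions in (i), (iv), and (v) follow the same route as the paper: recognizing that $\alpha$ is trivial on $(A\times N)\cup(N\times A)$ because $\mathcal{P}$ is associated with $(A,N,\mu)$, so $\wh{N}=N_0\times S$ and $\wt{N}\cong S$ is central of $p'$-order in $\wt{A}$; using the unique factorization $\mathfrak{X}=\mathcal{Q}\otimes\mathcal{P}_H$ of a representation affording $\psi$ (the paper cites \cite[Theorem 10.11]{Nav18}) to define $\wt{\psi}$ via the linear representation $(x,s)\mapsto s^{-1}\mathcal{Q}(x)$ of $\wh{H}$; and deducing (v) from the fact that $\tau$ is an ordinary $\wh{A}$-invariant character, exactly as in the paper's computation $\wh{\psi^x}=(\wt{\psi}')^{\wh{x}}\tau_{\wh{H}}=(\wt{\psi}^{\wt{x}})'\tau_{\wh{H}}$.

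The gap is in (ii) and (iii). You assert that because blocks of $H$ over $\bl(\mu)$ correspond to blocks of $\wh{H}$ over $\wh{\mu}$, blocks of $\wt{H}$ over $\bl(\wt{\mu})$ correspond to blocks of $\wh{H}$ over $\wh{\lambda}$, and $\wh{\psi}=\tau_{\wh{H}}\wt{\psi}'$, multiplication by $\tau_{\wh{H}}$ ``transforms central characters of $\wh{H}$ compatibly between these two.'' This is precisely the nontrivial content of Fong's theorem and does not follow just from $\tau_{\wh{H}}$ being ``a common fixed character.'' Multiplying by an irreducible character of degree $\mu(1)>1$ is not in general compatible with the decomposition into blocks, nor does it act in a controlled way on central characters or block idempotents; the reason it works here is a genuinely block-theoretic argument (comparing the images of class sums under the relevant algebra homomorphisms, exploiting that $N$ and $S$ are normal $p'$-subgroups, and the specific relation between the three blocks of $\wh{N}$ involved) carried out in \cite{Fon61}, specifically in \cite[(2B),(2C)]{Fon61}. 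The paper does not attempt to reprove this and instead cites Fong for (ii) and (iii), whereas your sketch asserts the conclusion at exactly the point where the real work lies. Similarly, your claim in (iii) that $Q\cong D$ is a defect group of $\wt{B}$ ``by defect-preservation from (iv)'' is circular: defect preservation on characters tells you $d(B)=d(\wt{B})$ but not that a specific subgroup is a defect group; this, too, is read off from the idempotent/vertex analysis in the proof of \cite[(2C)]{Fon61}.
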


\begin{proof}
Consider $\psi\in\irr(H\mid \mu)$ afforded by $\mathfrak{X}$. We just show how to construct $\wt{\psi}$. By \cite[Theorem 10.11]{Nav18}, there exists an irreducible projective representation $\mathcal{Q}\in\proj(H/N\mid \alpha_{H\times H}^{-1})$ such that $\mathfrak{X}=\mathcal{Q}\otimes \mathcal{P}_H$ unique up to similarity. Now, $\wh{\mathcal{Q}}(x,s):=\mathcal{Q}(x)s^{-1}$, for every $x\in H$ and $s\in S$, defines an irreducible linear representation of $\wh{H}$ with $N_0\leq \ker(\wh{\mathcal{Q}})$ and whose character lies over $\wh{\lambda}$. If we consider the inflation $\wh{\mathfrak{X}}$ to $\wh{H}$ of the representation of $\wh{H}/S\simeq H$ corresponding to $\mathfrak{X}$, that is $\wh{\mathfrak{X}}(\wh{x}):=\mathfrak{X}(\epsilon(\wh{x}))$, for every $\wh{x}\in \wh{H}$, then $\wh{\mathfrak{X}}=\wh{\mathcal{Q}}\otimes \wh{\mathcal{P}}_{\wh{H}}$. Define $\wt{\mathfrak{X}}$ to be the irreducible representation of $\wt{H}=\wh{H}/N_0$ whose inflation is $\wh{\mathcal{Q}}$, and let $\wt{\psi}$ be the character afforded by $\wt{\mathfrak{X}}$. Then $\wt{\psi}\in\irr(\wt{A}\mid \wt{\mu})$ and, if $\wh{\psi}$ is the inflation to $\wh{H}$ of the character of $\wh{H}/S\simeq H$ corresponding to $\psi$ and $\wt{\psi}'$ is the inflation of $\wt{\psi}$ to $\wh{H}$, then $\wh{\psi}=\tau_{\wh{H}}\wt{\psi}'$. The result follows from \cite{Fon61}. The description of defect groups is a consequence of the proof of \cite[2C]{Fon61}. To conclude, for $\wh{x}\in \wh{A}$, set $x:=\epsilon(\wh{x})$ and $\wt{x}:=N_0\wh{x}$. Then $\wh{\psi^x}=(\wh{\psi})^{\wh{x}}=(\wt{\psi}'\tau_{\wh{H}})^{\wh{x}}=(\wt{\psi}')^{\wh{x}}\tau_{\wh{H}}=(\wt{\psi}^{\wt{x}})'\tau_{\wh{H}}$, where $\wh{\psi^x}$ is the inflation to $\wh{H}$ of the character of $\wh{H}/S\simeq H$ corresponding to $\psi^x$ and $(\wt{\psi}^{\wt{x}})'$ is the inflation of $\wt{\psi}^{\wt{x}}$ to $\wh{H}$. Thus $(\wt{\psi})^{\wt{x}}$ coincides with $\wt{\psi^x}$ the Fong correspondent of $\psi^x$. In particular, since $\wt{\psi^x}\in \irr(\wt{B^x})$ and $(\wt{\psi})^{\wt{x}}\in\irr(\wt{B}^{\wt{x}})$, we conclude that $\wt{B^x}=\wt{B}^{\wt{x}}$.
\end{proof}

In the situation of Theorem \ref{thm:Fong correspondence}, we refer to $\wt{B}$ as the Fong correspondent of $B$ and to $\wt{\psi}$ as the Fong correspondent of $\psi$. An important feature of the Fong correspondence is that it is compatible with block induction.

\begin{prop}
\label{prop:Fong correspondence and block induction}
Assume Hypothesis \ref{hyp:Fong correspondence} and let $N\leq X\leq Y\leq A$. Let $b\in\Bl(X\mid \bl(\mu))$ with Fong correspondent $\wt{b}\in\Bl(\wt{X}\mid \bl(\wt{\mu}))$ and suppose that the induced blocks $b^Y$ and $(\wt{b})^{\wt{Y}}$ are defined. Then $\wt{b^Y}=(\wt{b})^{\wt{Y}}$.
\end{prop}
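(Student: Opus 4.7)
The plan is to lift the problem to the central extension $\wh{A}$ and exploit the character-level identity $\wh{\psi}=\tau_{\wh{X}}\cdot\wt{\psi}'$ provided by Theorem \ref{thm:Fong correspondence}(iv).

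First I would pick any $\psi\in\irr(b)$ and, using the assumption that $b^Y$ is defined together with \cite[Lemma 3.2]{Kno-Rob89}, take an irreducible constituent $\chi$ of $\psi^Y$, which then lies in $\irr(b^Y)$. Its Fong correspondent $\wt{\chi}$ lies in $\irr(\wt{b^Y})$ by Theorem \ref{thm:Fong correspondence}(iv), so it suffices to show that $\wt{\chi}\in\irr(\wt{Y}\mid\wt{\psi})$: once this is established, the assumption that $(\wt{b})^{\wt{Y}}$ is defined together with a second application of \cite[Lemma 3.2]{Kno-Rob89} places $\wt{\chi}$ in $\irr((\wt{b})^{\wt{Y}})$, giving $\wt{b^Y}=\bl(\wt{\chi})=(\wt{b})^{\wt{Y}}$.

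To show that $\wt{\chi}\in\irr(\wt{Y}\mid\wt{\psi})$, I would compute in $\wh{Y}$. Since $\tau_{\wh{X}}=\tau_{\wh{Y}}|_{\wh{X}}$ and $\tau_{\wh{Y}}$ is a class function on $\wh{Y}$, an elementary manipulation of the induction formula yields $(\tau_{\wh{X}}\wt{\psi}')^{\wh{Y}}=\tau_{\wh{Y}}\cdot(\wt{\psi}')^{\wh{Y}}$. Moreover, because $N_0\unlhd\wh{Y}$ is contained in $\ker(\wt{\psi}')$, the character $(\wt{\psi}')^{\wh{Y}}$ coincides with the inflation $(\wt{\psi}^{\wt{Y}})'$ of $\wt{\psi}^{\wt{Y}}$ from $\wt{Y}$, so $\wh{\psi}^{\wh{Y}}=\tau_{\wh{Y}}\cdot(\wt{\psi}^{\wt{Y}})'$. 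Inflating the statement that $\chi$ is a constituent of $\psi^Y$ shows that $\wh{\chi}=\tau_{\wh{Y}}\wt{\chi}'$ is a constituent of $\wh{\psi}^{\wh{Y}}$.

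Finally, by Gallagher's theorem applied to the $\wh{Y}$-invariant characters $\wh{\lambda},\wh{\mu}\in\irr(\wh{N})$ with respective extensions $\wh{\lambda}^\star$ and $\tau_{\wh{Y}}\wh{\lambda}^\star$ to $\wh{Y}$, the assignment $\eta\mapsto\tau_{\wh{Y}}\eta$ is a bijection $\irr(\wh{Y}\mid\wh{\lambda})\to\irr(\wh{Y}\mid\wh{\mu})$. Every constituent of $(\wt{\psi}^{\wt{Y}})'$ lies in $\irr(\wh{Y}\mid\wh{\lambda})$, so multiplying its irreducible decomposition by $\tau_{\wh{Y}}$ yields that of $\wh{\psi}^{\wh{Y}}$ with matching multiplicities. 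In particular, $\wh{\chi}=\tau_{\wh{Y}}\wt{\chi}'$ being a constituent forces $\wt{\chi}'$ to be a constituent of $(\wt{\psi}^{\wt{Y}})'$, and deflating through $N_0$ gives $\wt{\chi}\in\irr(\wt{Y}\mid\wt{\psi})$ as required. I expect the main subtlety to be the verification of the induction identity, which rests on $\tau_{\wh{Y}}$ being a class function on $\wh{Y}$ whose restriction to $\wh{X}$ is $\tau_{\wh{X}}$; the rest is careful bookkeeping through the normal $p'$-subgroup $N_0$.
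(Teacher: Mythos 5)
The paper does not actually prove this proposition: the stated proof is a citation to \cite{Rob00} and to \cite[Theorem 14.3]{Dad94}. So your attempt is a genuine attempt at a proof from scratch, and the useful comparison is with what a correct proof would require.

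The middle portion of your argument is sound. The identity $\wh{\psi}^{\wh{Y}}=\tau_{\wh{Y}}\cdot(\wt{\psi}^{\wt{Y}})'$ follows from the projection formula and from induction commuting with inflation through $N_0\unlhd\wh{Y}$, and the fact that $\eta\mapsto\tau_{\wh{Y}}\eta$ is a bijection $\irr(\wh{Y}\mid\wh{\lambda})\to\irr(\wh{Y}\mid\wh{\mu})$ is correct --- though it is precisely the content of Theorem \ref{thm:Fong correspondence}(iv) applied at level $Y$ rather than ``Gallagher's theorem'': $\tau$ is not linear, and $\wh{\lambda}$ does not in general extend to $\wh{Y}$ (if it did, $\wh{\mu}$ would extend as well and the whole central extension $\wh{A}\to A$ would be unnecessary), so the extension $\wh{\lambda}^\star$ you posit need not exist. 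Still, the conclusion of that step --- that $\wt{\chi}$ is a constituent of $\wt{\psi}^{\wt{Y}}$ whenever $\chi$ is a constituent of $\psi^Y$ --- holds.

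The gap is in how you enter and exit this computation. You assert that, because $b^Y$ is defined, an irreducible constituent $\chi$ of $\psi^Y$ lies in $\irr(b^Y)$, and symmetrically that $\wt{\chi}$ being a constituent of $\wt{\psi}^{\wt{Y}}$ with $\wt{\psi}\in\irr(\wt{b})$ forces $\wt{\chi}\in\irr\bigl((\wt{b})^{\wt{Y}}\bigr)$. Neither step is true. Brauer induction of blocks is defined via central characters and the map on class sums; it does not interact with ordinary induction of characters in the way you need. Even when $b^Y$ is defined, the constituents of $\psi^Y$ for $\psi\in\irr(b)$ are scattered across many blocks of $Y$, not confined to $b^Y$ (already for $N=X=\{1\}$ and $\psi=1_X$ the induced character is the regular character and meets every block of $Y$, while $b^Y$ is the principal block). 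The reference \cite[Lemma 3.2]{Kno-Rob89} is a well-definedness statement for block induction from $p$-chain normalizers; it does not say that constituents of induced characters lie in the induced block. As a result, both the initial choice of $\chi$ and the final placement of $\wt{\chi}$ in $(\wt{b})^{\wt{Y}}$ are unjustified, and the argument does not establish $\wt{b^Y}=(\wt{b})^{\wt{Y}}$. The correct proof works with the Brauer maps directly: one shows that the central character of $\wt{b}$ is obtained from that of $b$ in a way compatible with the class-sum map $\widehat{K}\mapsto\widehat{K\cap X}$, which is the content of Robinson's argument and of Dade's Theorem 14.3.
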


\begin{proof}
This result has been shown in \cite{Rob00}. It can also be deduced from \cite[Theorem 14.3]{Dad94}.
\end{proof}

For $x\in G$, we denote by $\cl_G(x)$ the $G$-conjugacy class of $x$. Moreover, for any subset $K$ of $G$, we denote by $K^+$ the sum of its elements in the group algebra of $G$.

\begin{theo}
\label{thm:Fong correspondence and character triple relations}
Assume Hypothesis \ref{hyp:Fong correspondence}. For $i=1,2$, consider $N\leq L_i\unlhd H_i\leq A$ and a $H_i$-invariant $\psi_i\in\irr(L_i\mid \mu)$. Notice that $\wt{L}_i\unlhd \wt{H}_i$ and that the Fong correspondent $\wt{\psi}_i\in\irr(\wt{L}_i\mid \wt{\mu})$ is $\wt{H}_i$-invariant. Let $L_i\leq G\unlhd A$ and assume
$$\left(\wt{H}_1,\wt{L}_1,\wt{\psi}_1\right)\iso{\wt{G}}\left(\wt{H}_2,\wt{L}_2,\wt{\psi}_2\right).$$
Then
$$\left(H_1,L_1,\psi_1\right)\iso{G}\left(H_2,L_2,\psi_2\right).
$$
\end{theo}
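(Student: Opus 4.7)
The plan is to transfer the given $\wt{G}$-block isomorphism on the Fong side to the target $G$-block isomorphism by explicitly constructing a pair $(\mathcal{P}_1,\mathcal{P}_2)$ out of a pair $(\wt{\mathcal{P}}_1,\wt{\mathcal{P}}_2)$ realising the hypothesis. Modelled on the Fong identity $\wh{\psi}_i = \tau_{\wh{H}_i}\wt{\psi}_i'$, I would set
\[
\mathcal{P}_i(x) := \mathcal{P}(x)\otimes \wt{\mathcal{P}}_i\bigl(N_0(x,1)\bigr),\qquad x\in H_i.
\]
Using the normalisations $\alpha(n,-)=\alpha(-,n)=1$ for $n\in N$ (so $\alpha$ factors through $A/N$) together with $\wt{\mathcal{P}}_i|_{\wt{N}}=\wt{\mu}\cdot I$ (Schur, since $\wt{N}$ is central in $\wt{L}_i$), a direct computation gives $\mathcal{P}_i(x)\mathcal{P}_i(y)=\wt{\alpha}_i(N_0(x,1),N_0(y,1))\mathcal{P}_i(xy)$, so $\mathcal{P}_i\in\proj(H_i\mid \alpha_i)$ with $\alpha_i$ the pullback of $\wt{\alpha}_i$ via $H_i\twoheadrightarrow \wt{H}_i/\wt{L}_i$. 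That $\mathcal{P}_i|_{L_i}$ affords $\psi_i$ follows from the pointwise Fong identity $\psi_i(x)=\tr(\mathcal{P}(x))\,\wt{\psi}_i(N_0(x,1))$ for $x\in L_i$.

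I would then verify the four conditions of Definition~\ref{def:N-block isomorphic character triples}. The group-theoretic conditions $G\unlhd GH_1=GH_2$ and $L_i=H_i\cap G$ transfer from the tilde side because the functor $X\mapsto\wt{X}=\epsilon^{-1}(X)/N_0$ preserves inclusions and intersections on subgroups of $A$ containing $N$. For the defect group condition, pick $\wt{D}_i\in\delta(\bl(\wt{\psi}_i))$ with $\c_{\wt{G}\wt{H}_i}(\wt{D}_i)\leq \wt{H}_i$ and lift to the unique Sylow $p$-subgroup $\wh{D}_i$ of $\epsilon^{-1}(D_i)=\wh{D}_i\times S$ (using that $S$ is a central $p'$-group); by Theorem~\ref{thm:Fong correspondence}(iii), $\wh{D}_i\cong D_i$ under $\epsilon$ is a defect group of $\bl(\psi_i)$. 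For $g\in\c_{GH_i}(D_i)$ any lift $\wh{g}$ normalises $\wh{D}_i$, and the map $d\mapsto [\wh{g},d]$ is a homomorphism from the $p$-group $\wh{D}_i$ to the $p'$-group $S$, hence trivial, so $\wh{g}\in\c_{\wh{G}\wh{H}_i}(\wh{D}_i)$ and its image in $\wt{G}\wt{H}_i$ lies in $\c_{\wt{G}\wt{H}_i}(\wt{D}_i)\leq \wt{H}_i$, forcing $g\in H_i$. Factor-set compatibility $\alpha_1(x,y)=\alpha_2(\imath(x),\imath(y))$ is immediate from the analogous tilde relation and the compatibility of $\imath$ with $\wt{\imath}$. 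The scalar-function condition on $\c_{GH_i}(G)$ is checked by Schur's lemma: for such $y$, $\mathcal{P}(y)$ is a common scalar (since $y\in \c_A(N)$), and setting $\wt{y}:=N_0(y,1)$, a short commutator computation shows $\wt{y}\in\c_{\wt{G}\wt{H}_i}(\wt{G})$, so $\wt{\mathcal{P}}_i(\wt{y})$ are scalars which agree by hypothesis.

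The main obstacle is the block-induction condition (iv). The strategy is to show that $\sigma_{J_1}^{(\mathcal{P}_1,\mathcal{P}_2)}$ is compatible with $\wt{\sigma}_{\wt{J}_1}^{(\wt{\mathcal{P}}_1,\wt{\mathcal{P}}_2)}$ via the Fong correspondence, i.e.\ $\widetilde{\sigma_{J_1}(\chi)}=\wt{\sigma}_{\wt{J}_1}(\wt{\chi})$ for all $\chi\in\irr(J_1\mid \psi_1)$ with Fong correspondent $\wt{\chi}$. Writing $\chi=\tr(\mathcal{Q}_{J_1}\otimes\mathcal{P}_{1,J_1})$ with $\mathcal{Q}\in\proj(J/G\mid \alpha_1^{-1})$, the rep $\mathfrak{X}(x)=\mathcal{Q}(x)\otimes\mathcal{P}(x)\otimes\wt{\mathcal{P}}_1(N_0(x,1))$ affords $\chi$; its inflation $\wh{\mathfrak{X}}$ factors as $\wh{\mathcal{P}}|_{\wh{J}_1}\otimes \wt{\mathfrak{X}}'$ with
\[
\wt{\mathfrak{X}}\bigl(N_0(x,s)\bigr)\;=\;s^{-1}\mathcal{Q}(x)\otimes\wt{\mathcal{P}}_1\bigl(N_0(x,1)\bigr),
\]
which one checks is a well-defined honest representation of $\wt{J}_1$. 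Hence its character is $\wt{\chi}$, and rewriting gives $\wt{\chi}=\tr(\mathcal{Q}'_{\wt{J}_1}\otimes\wt{\mathcal{P}}_{1,\wt{J}_1})$ with $\mathcal{Q}'(N_0(x,s)):=\mathcal{Q}(x)$, an element of $\proj(\wt{J}/\wt{G}\mid \wt{\alpha}_1^{-1})$. The very same formula applied to $\mathcal{P}_{2}$ identifies the Fong correspondent of $\sigma_{J_1}(\chi)$ with $\wt{\sigma}_{\wt{J}_1}(\wt{\chi})$, establishing the claimed compatibility. The tilde hypothesis $\bl(\wt{\chi})^{\wt{J}}=\bl(\wt{\sigma}_{\wt{J}_1}(\wt{\chi}))^{\wt{J}}$, combined with Proposition~\ref{prop:Fong correspondence and block induction} and the injectivity of the Fong correspondence on blocks from Theorem~\ref{thm:Fong correspondence}(ii), then yields $\bl(\chi)^J=\bl(\sigma_{J_1}(\chi))^J$, completing the verification.
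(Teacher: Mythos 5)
Your proposal is correct and follows essentially the same route as the paper: the paper starts from arbitrary $\mathcal{R}_i\in\proj(H_i)$ associated with $\psi_i$ and then adjusts by a scalar function to force $\wh{\mathcal{R}}_i=\wh{\mathcal{P}}_{\wh{H}_i}\otimes\wt{\mathcal{R}}_i'$, whereas you build $\mathcal{P}_i(x)=\mathcal{P}(x)\otimes\wt{\mathcal{P}}_i(N_0(x,1))$ directly from the tensor decomposition; these are equivalent formulations of the same construction. The verification of the defect-group condition (lifting through $\epsilon$ using that the commutator of a $p$-group into the central $p'$-group $S$ is trivial) and, crucially, the reduction of the block-induction condition to the compatibility $\wt{\sigma_{J_1}(\chi)}=\wt{\sigma}_{\wt{J}_1}(\wt{\chi})$ combined with Proposition~\ref{prop:Fong correspondence and block induction} are exactly the paper's arguments.
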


\begin{proof}
The group theoretical conditions are clearly satisfied and without loss of generality we may assume $A=GH_i$, $\wh{A}=\wh{G}\wh{H}_i$ and $\wt{A}=\wt{G}\wt{H}_i$. Consider $B_i:=\bl(\psi_i)$ and its Fong correspondent $\wt{B}_i=\bl(\wt{\psi}_i)$. By hypothesis, there exists a defect group $D_i\in\delta(\wt{B}_i)$ such that $\c_{\wt{A}}(D_i)\leq \wt{H}_i$. Furthermore, by Theorem \ref{thm:Fong correspondence} (iii) we can find a defect group $P_i\in\delta(B_i)$ such that, if $Q_i\in\syl_p(\wh{P}_i)$, then $D_i=Q_iN_0/N_0$. In particular 
\begin{equation}
\label{eq:Fong 1}
\c_{\wh{A}}(Q_i)\leq \wh{H}_i
\end{equation}
and, noticing that
$$\epsilon\left(\c_{\wh{A}}(Q_i)\right)=\c_A(P_i),$$
we obtain $\c_A(P_i)\leq H_i$. Fix projective representations $(\wt{\mathcal{R}}_1,\wt{\mathcal{R}}_2)$ associated with $\left(\wt{H}_1,\wt{L}_1,\wt{\psi}_1\right)\iso{\wt{G}}\left(\wt{H}_2,\wt{L}_2,\wt{\psi}_2\right)$ and let $\wt{\alpha}_i$ be the factor set of $\wt{\mathcal{R}}_i$. Consider a projective representation $\mathcal{R}_i\in\proj(H_i\mid \alpha_i)$ associated with $\psi_i$ and define the projective representation $\wh{\mathcal{R}}_i\in\proj(\wh{H}_i\mid \wh{\alpha}_i)$ given by 
$$\wh{\mathcal{R}}_i(h):=\mathcal{R}_i(\epsilon(h)),$$
for every $h\in \wh{H}_i$. Notice that $\wh{\alpha}_i(h,k)=\alpha_i(\epsilon(h),\epsilon(k))$, for all $h,k\in \wh{H}_i$, and that $\wh{\mathcal{R}}_i$ is associated with $\wh{\psi}_i$. Let $\wt{\mathcal{R}}_i'\in\proj(\wh{H}_i\mid \wt{\alpha}'_i)$ be the projective representation defined by 
$$\wt{\mathcal{R}}_i'(h):=\wt{\mathcal{R}}_i(N_0h),$$
for every $h\in \wh{H}_i$. Clearly $\wt{\alpha}_i'(h,k)=\wt{\alpha}_i(N_0h,N_0k)$, for all $h,k\in \wh{H}_i$, and $\wt{\mathcal{R}}_i'$ is associated with $\wt{\psi}_i'$. As $\wh{\mathcal{R}}_i$ and $\wh{\mathcal{P}}_{\wh{H}_i}\otimes \wt{\mathcal{R}}_i'$ are a projective representations of $\wh{H}_i$ associated with $\tau_{\wh{L}_i}\wt{\psi}_i'=\wh{\psi}_i$, there exists a map $\wh{\xi}_i:\wh{H}_i/\wh{L}_i\to\mathbb{C^\times}$ such that $\wh{\xi}_i\wh{\mathcal{R}}_i=\wh{\mathcal{P}}_{\wh{H}_i}\otimes \wt{\mathcal{R}}_i'$. Let $\xi_i:H_i/L_i\to \mathbb{C}^\times$ corresponds to $\wh{\xi}_i$ via the isomorphism $H_i/L_i\simeq \wh{H}_i/\wh{L}_i$. Replacing $\mathcal{R}_i$ with $\xi_i\mathcal{R}_i$, we may assume
\begin{equation}
\label{eq:Fong 2}
\wh{\mathcal{R}}_i=\wh{\mathcal{P}}_{\wh{H}_i}\otimes \wt{\mathcal{R}}_i'.
\end{equation}
Now, as the factor sets $\wt{\alpha}_1$ and $\wt{\alpha}_2$ coincide under the isomorphism $\wt{H}_1/\wt{L}_1\simeq \wt{H}_2/\wt{L}_2$, we deduce that $\alpha_1$ and $\alpha_2$ coincide under the isomorphism $H_1/L_1\simeq H_2/L_2$. By hypothesis $\wt{\mathcal{R}}_1$ and $\wt{\mathcal{R}}_2$ define the same scalar function on $\c_{\wt{A}}(\wt{G})$. As $\c_{\wh{A}}(\wh{G})N_0/N_0\leq \c_{\wt{A}}(\wt{G})$ and $\c_{\wh{A}}(\wh{G})\leq \wh{H}_1\cap \wh{H}_2$ by \eqref{eq:Fong 1}, the scalar functions defined by $\wt{\mathcal{R}}'_1$ and $\wt{\mathcal{R}}'_2$ on $\c_{\wh{A}}(\wh{G})$ coincide. Now $\wh{\mathcal{R}}_{1,\c_{\wh{A}}(\wh{G})}$ and $\wh{\mathcal{R}}_{2,\c_{\wh{A}}(\wh{G})}$ are associated with the same scalar function and, since $\epsilon(\c_{\wh{A}}(\wh{G}))=\c_A(G)$ (see \cite[Theorem 4.1 (d)]{Nav-Spa14I}), the same is true for $\mathcal{R}_{1,\c_A(G)}$ and $\mathcal{R}_{2,\c_A(G)}$.

Next, consider $G\leq J\leq A$ and set $J_i:=J\cap H_i$. Notice that, if $\chi\in\irr(J_1\mid \psi_1)$, then Theorem \ref{thm:Fong correspondence} (iv) implies that $\wt{\chi}\in\irr(\wt{J}_1\mid \wt{\psi}_1)$. Write $\chi=\tr(\mathcal{Q}_{J_1}\otimes \mathcal{R}_{1,J_1})$, for some $\mathcal{Q}\in\proj(J/G)$. If we set $\wh{Q}(x):=\mathcal{Q}(\epsilon(x))$ for every $x\in \wh{J}$, then \eqref{eq:Fong 2} implies
\begin{align*}
\wh{\chi}_1&=\tr\left(\wh{\mathcal{Q}}_{\wh{J}_1}\otimes \wh{\mathcal{R}}_{1,\wh{J}_1}\right)
\\
&=\tr\left(\wh{\mathcal{Q}}_{\wh{J}_1}\otimes \wt{\mathcal{R}}'_{1,\wh{J}_1} \otimes \wh{\mathcal{P}}_{\wh{J}_1}\right)
\end{align*}
and therefore $\wt{\chi}'=\tr(\wh{\mathcal{Q}}_{\wh{J}_1}\otimes \wt{\mathcal{R}}'_{1,\wh{J}_1})$. Now, let $\wt{\mathcal{Q}}\in\proj(\wt{J}/\wt{G})$ correspond to $\wh{\mathcal{Q}}$ via the isomorphism $\wt{J}/\wt{G}\simeq \wh{J}/\wh{G}$ and observe that the Fong correspondent of $\chi$ can be written as $\wt{\chi}=\tr(\wt{\mathcal{Q}}_{\wt{J}_1}\otimes \wt{\mathcal{R}}_{1,\wt{J}_1})$. By definition $\wt{\sigma}_{\wt{J}_1}(\wt{\chi})=\tr(\wt{\mathcal{Q}}_{\wt{J}_2}\otimes \wt{\mathcal{R}}_{2,\wt{J}_2})$ so that its inflation $\wt{\sigma}_{\wt{J}_1}(\wt{\chi})'=\tr(\wh{\mathcal{Q}}_{\wh{J}_2}\otimes \wt{\mathcal{R}}'_{2,\wh{J}_2})$. By Theorem \ref{thm:Fong correspondence} (iv) and \eqref{eq:Fong 2} we obtain
\begin{align*}
\tau_{\wh{J}_2}\wt{\sigma}_{\wt{J}_1}(\wt{\chi})'&=\tr\left(\wh{\mathcal{P}}_{\wh{J}_2}\otimes\wh{\mathcal{Q}}_{\wh{J}_2}\otimes \wt{\mathcal{R}}'_{2,\wh{J}_2}\right)
\\
&=\tr\left(\wh{\mathcal{Q}}_{\wh{J}_2}\otimes \wh{\mathcal{R}}_{2,\wh{J}_2}\right)
\\
&=\wh{\sigma_{J_1}(\chi)}
\\
&=\tau_{\wh{J}_2}\wt{\sigma_{J_1}(\chi)}'
\end{align*}
and therefore
$$\wt{\sigma_{J_1}(\chi)}=\wt{\sigma}_{\wt{J}_1}\left(\wt{\chi}\right).$$
Since by hypothesis $\bl(\wt{\sigma_{J_1}(\chi)})^{\wt{J}}=\bl(\wt{\chi})^{\wt{J}}$, we conclude from Proposition \ref{prop:Fong correspondence and block induction} that $\bl(\sigma_{J_1}(\chi))^J=\bl(\chi)^J$. This completes the proof.
\end{proof}

From now on we consider $N\leq G\unlhd A$. Since $\wt{N}$ is a central $p'$-subgroup of $\wt{G}$, for every $p$-subgroup $P$ of $G$ we have a decomposition $\wt{P}=\wt{N}\times \O_p(\wt{P})$. We write $\wt{P}_p:=\O_p(\wt{P})$. Mapping $P$ to $\wt{P}_p$ induces a length preserving bijection
\begin{align}
\mathfrak{N}(G,Z)/G &\to\mathfrak{N}(\wt{G},\wt{Z}_p)/\wt{G} \label{eq:Bijection on chains}
\\
\d &\mapsto \wt{\d} \nonumber
\end{align}
which commutes with the action of $A$ and $\wt{A}$. In particular, observe that $\wt{NG_\d}=\wt{G}_{\wt{\d}}$. Using Theorem \ref{thm:Really above the Glauberman correspondence with block relation with chains}, Theorem \ref{thm:Fong correspondence} and Theorem \ref{thm:Fong correspondence and character triple relations} we obtain the following corollaries.

\begin{cor}
\label{cor:Fong correspondence and Glauberman bijection}
Assume Hypothesis \ref{hyp:Fong correspondence} and let $N\leq G\unlhd A$. Consider a normal $p$-chain $\d$ of $G$ with final term $P$ and let $f_P(\mu)\in\irr(N_P)$ be the $P$-Glauberman correspondent of $\mu$. Then there exists a defect preserving bijection
$$\Gamma_{\mu,\d}:\irr\left(G_\d\enspace\middle|\enspace f_P(\mu)\right)\to\irr\left(\wt{G}_{\wt{\d}}\enspace\middle|\enspace \wt{\mu}\right)$$
commuting with the action of $A$ and $\wt{A}$.
\end{cor}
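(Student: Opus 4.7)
The plan is to realize $\Gamma_{\mu,\d}$ as a composition of two bijections already at our disposal: the defect preserving $A_\d$-equivariant bijection
$$\Phi_{\mu,\d}:\irr(NG_\d\mid \mu)\to\irr(G_\d\mid f_P(\mu))$$
provided by Theorem \ref{thm:Really above the Glauberman correspondence with block relation with chains}, and the Fong correspondence of Theorem \ref{thm:Fong correspondence} applied to the subgroup $H=NG_\d$, which yields a defect preserving bijection
$$\irr(NG_\d\mid \mu)\to\irr(\wt{NG_\d}\mid \wt{\mu}),\qquad \psi\mapsto \wt{\psi}.$$
Using the length preserving bijection \eqref{eq:Bijection on chains}, together with the identification $\wt{NG_\d}=\wt{G}_{\wt{\d}}$ noted in the text, the target of the Fong correspondence is exactly $\irr(\wt{G}_{\wt{\d}}\mid \wt{\mu})$. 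One then simply sets
$$\Gamma_{\mu,\d}:=\bigl(\psi\mapsto \wt{\psi}\bigr)\circ \Phi_{\mu,\d}^{-1}.$$

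The resulting map is a bijection by construction, and defect preservation follows immediately from the fact that both constituent bijections preserve defects (for $\Phi_{\mu,\d}$ this is part of Theorem \ref{thm:Really above the Glauberman correspondence with block relation with chains}; for the Fong correspondence this is Theorem \ref{thm:Fong correspondence}(iv), together with the description of defect groups in Theorem \ref{thm:Fong correspondence}(iii) showing that $\wt{N}$ does not contribute a $p$-part).

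For equivariance, fix $a\in A$, pick any lift $\wh{a}\in\wh{A}$ with $\epsilon(\wh{a})=a$, and let $\wt{a}:=N_0\wh{a}\in\wt{A}$. Whenever $\chi\in\irr(G_\d\mid f_P(\mu))$ and $a$ satisfies $\d^a=\d$ (so that the left-hand action is defined), the $A_\d$-equivariance of $\Phi_{\mu,\d}$ gives $\Phi_{\mu,\d}^{-1}(\chi^a)=\Phi_{\mu,\d}^{-1}(\chi)^a$, while Theorem \ref{thm:Fong correspondence}(v) yields $\widetilde{\psi^a}=\wt{\psi}^{\wt{a}}$ for $\psi=\Phi_{\mu,\d}^{-1}(\chi)$; composing these equalities gives $\Gamma_{\mu,\d}(\chi^a)=\Gamma_{\mu,\d}(\chi)^{\wt{a}}$, which is the required compatibility between the $A$- and $\wt{A}$-actions (note that $\wt{\d}$ is stabilized by $\wt{a}$ since $\d$ is stabilized by $a$ and \eqref{eq:Bijection on chains} is equivariant).

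There is really no hard step here: the statement is essentially an assembly of the two previous results, and the only mildly delicate point is keeping track of which group acts on what and verifying that the identifications $\wt{NG_\d}=\wt{G}_{\wt{\d}}$ and $\epsilon^{-1}(A_\d)/N_0=\wt{A}_{\wt{\d}}$ match up under \eqref{eq:Bijection on chains}; both are immediate from the definition of the tilde construction applied to $p$-subgroups and from the fact that $N$ is a $p'$-group, so that $\wt{P}_p\cong P$ canonically.
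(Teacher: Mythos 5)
Your proposal is correct and takes exactly the approach the paper intends: the paper's own proof is the single sentence ``This follows immediately by Theorem \ref{thm:Really above the Glauberman correspondence with block relation with chains} and Theorem \ref{thm:Fong correspondence}.'' You have unpacked that one-liner precisely, realizing $\Gamma_{\mu,\d}$ as the Fong correspondence on $\irr(NG_\d\mid\mu)$ composed with $\Phi_{\mu,\d}^{-1}$, and your verifications of bijectivity, defect preservation, and $A_\d$-versus-$\wt{A}_{\wt{\d}}$ equivariance via Theorem \ref{thm:Fong correspondence}(v) are all as expected.
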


\begin{proof}
This follows immediately by Theorem \ref{thm:Really above the Glauberman correspondence with block relation with chains} and Theorem \ref{thm:Fong correspondence}.
\end{proof}

The bijections described in the previous corollary are compatible with the relation $\iso{N}$ from Definition \ref{def:N-block isomorphic character triples}.

\begin{cor}
\label{cor:Fong correspondence and Glauberman bijection with character triple relations}
Assume Hypothesis \ref{hyp:Fong correspondence} and let $N\leq G\unlhd A$. Consider normal $p$-chains $\d$ and $\e$ of $G$ with final term respectively $P$ and $Q$ and let $\Gamma_{\mu,\d}$ and $\Gamma_{\mu,\e}$ be the corresponding bijections given by Corollary \ref{cor:Fong correspondence and Glauberman bijection}. Let $\vartheta\in\irr(G_\d\mid f_P(\mu))$ and $\chi\in\irr(G_\e\mid f_Q(\mu))$ and suppose that
$$\left(\wt{A}_{\wt{\d},\Gamma_{\mu,\d}(\vartheta)},\wt{G}_{\wt{\d}},\Gamma_{\mu,\d}(\vartheta)\right)\iso{\wt{G}}\left(\wt{A}_{\wt{\e},\Gamma_{\mu,\e}(\chi)},\wt{G}_{\wt{\e}},\Gamma_{\mu,\e}(\chi)\right).$$
Then
$$\left(A_{\d,\vartheta},G_\d,\vartheta\right)\iso{G}\left(A_{\e,\chi},G_\e,\chi\right).$$
\end{cor}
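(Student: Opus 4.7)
The plan is to decompose the desired $G$-block isomorphism into a short chain of three intermediate $\iso{G}$-relations: two given by Theorem \ref{thm:Really above the Glauberman correspondence with block relation with chains} (one for $\d$ and one for $\e$) and a middle one transferred from the $\wt{G}$-block isomorphism in the hypothesis via Theorem \ref{thm:Fong correspondence and character triple relations}. Transitivity and symmetry of the relation $\iso{G}$ then finish the argument.

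More concretely, I would first unpack $\Gamma_{\mu,\d}$ as constructed in Corollary \ref{cor:Fong correspondence and Glauberman bijection}: it is the composition of $\Phi_{\mu,\d}^{-1}:\irr(G_\d\mid f_P(\mu))\to \irr(NG_\d\mid \mu)$ (from Theorem \ref{thm:Really above the Glauberman correspondence with block relation with chains}) with the Fong correspondence $\chi\mapsto \wt{\chi}$ from Theorem \ref{thm:Fong correspondence}. Setting $\vartheta':=\Phi_{\mu,\d}^{-1}(\vartheta)\in\irr(NG_\d\mid \mu)$ and $\chi':=\Phi_{\mu,\e}^{-1}(\chi)\in\irr(NG_\e\mid \mu)$, one then has $\Gamma_{\mu,\d}(\vartheta)=\wt{\vartheta'}$ and $\Gamma_{\mu,\e}(\chi)=\wt{\chi'}$. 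Since $\Phi_{\mu,\d}$ is $A_\d$-equivariant and $N\leq NG_\d\leq A_{\d,\vartheta'}$, one checks $A_{\d,\vartheta}=A_{\d,\vartheta'}=NA_{\d,\vartheta'}$, and analogously for $\e,\chi$. Applying Theorem \ref{thm:Really above the Glauberman correspondence with block relation with chains} at $\d$ and at $\e$ then directly yields
$$\left(A_{\d,\vartheta},NG_\d,\vartheta'\right)\iso{G}\left(A_{\d,\vartheta},G_\d,\vartheta\right)\quad\text{and}\quad\left(A_{\e,\chi},NG_\e,\chi'\right)\iso{G}\left(A_{\e,\chi},G_\e,\chi\right).$$

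For the middle step, I would verify that the operations $\chi\mapsto \wt{\chi}$ and $\d\mapsto \wt{\d}$ commute with the $A$-action in the sense that $\wt{A}_{\wt{\d},\wt{\vartheta'}}=\wt{A_{\d,\vartheta'}}=\wt{A_{\d,\vartheta}}$; this uses Theorem \ref{thm:Fong correspondence} (v) for characters together with the bijection \eqref{eq:Bijection on chains} for the $p$-chains. Under these identifications, the hypothesis becomes
$$\left(\wt{A_{\d,\vartheta}},\wt{NG_\d},\wt{\vartheta'}\right)\iso{\wt{G}}\left(\wt{A_{\e,\chi}},\wt{NG_\e},\wt{\chi'}\right),$$
which is exactly the form required by Theorem \ref{thm:Fong correspondence and character triple relations}, applied with $L_1=NG_\d$, $H_1=A_{\d,\vartheta}$, $\psi_1=\vartheta'$ and $L_2=NG_\e$, $H_2=A_{\e,\chi}$, $\psi_2=\chi'$. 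The conclusion of that theorem reads
$$\left(A_{\d,\vartheta},NG_\d,\vartheta'\right)\iso{G}\left(A_{\e,\chi},NG_\e,\chi'\right),$$
and splicing this between the two Glauberman $\iso{G}$-relations above gives the statement.

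The main obstacle I anticipate is bookkeeping rather than substance: carefully identifying the stabilizers $\wt{A}_{\wt{\d},\wt{\vartheta'}}$ with $\wt{A_{\d,\vartheta}}$ under the Fong construction, and making sure that $L_i=NG_{\d_i}$ is indeed normal in $H_i=A_{\d_i,\vartheta_i}$ and that $\psi_i$ is $H_i$-invariant, so that the hypotheses of Theorem \ref{thm:Fong correspondence and character triple relations} are satisfied verbatim. Once these compatibilities are in place, the proof is a formal concatenation of three instances of $\iso{G}$; no further analysis of defect groups, projective representations, or block induction is required beyond what is already guaranteed by the quoted theorems.
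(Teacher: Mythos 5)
Your decomposition into three $\iso{G}$-steps — the outer two from Theorem \ref{thm:Really above the Glauberman correspondence with block relation with chains}, the middle one transferred through Theorem \ref{thm:Fong correspondence and character triple relations}, spliced by transitivity and symmetry of $\iso{G}$ — is exactly what the paper's terse citation of those three results is invoking, and your identification of $\Gamma_{\mu,\d}$ as the composite of $\Phi_{\mu,\d}^{-1}$ with the Fong correspondence of Theorem \ref{thm:Fong correspondence} is correct.

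There is, however, a bookkeeping slip that produces false intermediate triples: you assert $A_{\d,\vartheta'}=NA_{\d,\vartheta'}$, i.e.\ that $N\leq A_\d$, but a normal $p'$-subgroup need not normalize each term of a $p$-chain (already $\O_{p'}(A_4)=V_4$ does not normalize a Sylow $3$-subgroup). Consequently $NG_\d\nleq A_{\d,\vartheta}$ in general, and $(A_{\d,\vartheta},NG_\d,\vartheta')$ is not even a legal character triple. What Theorem \ref{thm:Really above the Glauberman correspondence with block relation with chains} actually gives is
$$\left(NA_{\d,\vartheta},NG_\d,\vartheta'\right)\iso{G}\left(A_{\d,\vartheta},G_\d,\vartheta\right),$$
with $NA_{\d,\vartheta}$ on the left, and Theorem \ref{thm:Fong correspondence and character triple relations} must be applied with $H_1=NA_{\d,\vartheta}$, $L_1=NG_\d$, $\psi_1=\vartheta'$ (and analogously for $\e,\chi$): one checks $NG_\d=G\cap NA_{\d,\vartheta}\unlhd NA_{\d,\vartheta}$ and that $\vartheta'$ is $NA_{\d,\vartheta}$-invariant. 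This still matches the given hypothesis because $\wt{NA_{\d,\vartheta}}=\wt{A_{\d,\vartheta}}=\wt{A}_{\wt{\d},\Gamma_{\mu,\d}(\vartheta)}$ — the factor $N$ is absorbed in the Fong quotient, since $\wh{N}=N_0\times S$ with $N_0$ killed and $S$ central. With $NA_{\d,\vartheta}$ and $NA_{\e,\chi}$ in the inner positions, the chain
$$\left(A_{\d,\vartheta},G_\d,\vartheta\right)\iso{G}\left(NA_{\d,\vartheta},NG_\d,\vartheta'\right)\iso{G}\left(NA_{\e,\chi},NG_\e,\chi'\right)\iso{G}\left(A_{\e,\chi},G_\e,\chi\right)$$
closes exactly as you intend, so once this substitution is made the argument is sound and coincides with the paper's.
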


\begin{proof}
This is a consequence of Theorem \ref{thm:Really above the Glauberman correspondence with block relation with chains}, Theorem \ref{thm:Fong correspondence and character triple relations} and Corollary \ref{cor:Fong correspondence and Glauberman bijection}.
\end{proof}

The result that we are actually going to need in the final proof is the following. This is obtained by putting together all the results obtained so far.

\begin{cor}
\label{cor:CTC for p-solvable in the stable case}
Assume Hypothesis \ref{hyp:Fong correspondence} and let $N\leq G\unlhd A$. Let $Z$ be a central $p$-subgroup of $G$ and consider a block $B\in\Bl(G\mid \bl(\mu))$ whose defect groups are larger than $Z$. Then the Fong correspondent $\wt{B}\in\Bl(\wt{G})$ has defect groups larger than $\wt{Z}_p$ and there exists a bijection
$$\Delta:\C^d(B,Z)/G\to\C^d(\wt{B},\wt{Z}_p)/\wt{G}$$
that preserves the length of the $p$-chains, commutes with the ation of $A$ and $\wt{A}$ and such that, if
$$\left(\wt{A}_{(\wt{\d},\wt{\vartheta})}, \wt{G}_{\wt{\d}}, \wt{\vartheta}\right)\iso{\wt{G}}\left(\wt{A}_{(\wt{\e},\wt{\chi})}, \wt{G}_{\wt{\d}}, \wt{\chi}\right),$$
then
$$\left(A_{(\d,\vartheta)},G_{\d},\vartheta\right)\iso{G}\left(A_{(\e,\chi)},G_{\e},\chi\right),$$
for every $(\d,\vartheta), (\e,\chi)\in\C^d(B,Z)$, $(\wt{\d},\wt{\vartheta})\in\Delta(\overline{(\d,\vartheta)})$ and $(\wt{\e},\wt{\chi})\in\Delta(\overline{(\e,\chi)})$.
\end{cor}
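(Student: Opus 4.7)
The plan is to construct $\Delta$ by composing the bijection on $p$-chains from \eqref{eq:Bijection on chains} with the Glauberman--Fong character bijection $\Gamma_{\mu,\d}$ of Corollary \ref{cor:Fong correspondence and Glauberman bijection}. Concretely, for a representative $(\d,\vartheta) \in \C^d(B,Z)$, I will let $P$ be the last term of $\d$, verify that $\vartheta$ necessarily lies over $f_P(\mu)$, and then set $\wt{\vartheta} := \Gamma_{\mu,\d}(\vartheta) \in \irr(\wt{G}_{\wt{\d}} \mid \wt{\mu})$ and
$$\Delta\bigl(\overline{(\d,\vartheta)}\bigr) := \overline{(\wt{\d},\wt{\vartheta})},$$
where $\wt{\d}$ is the image of $\d$ under \eqref{eq:Bijection on chains}. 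The chain map is length-preserving and commutes with $A$ and $\wt{A}$ by construction, and $\Gamma_{\mu,\d}$ is defect-preserving and equivariant by Corollary \ref{cor:Fong correspondence and Glauberman bijection}, so once I show the image really lands in $\C^d(\wt{B},\wt{Z}_p)$, the bijectivity and equivariance of $\Delta$ follow immediately.

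For the first well-definedness check, since $\mu$ is $A$-invariant (hence $P$-invariant) and $B = \bl(\vartheta)^G$ covers $\bl(\mu)$, Corollary \ref{cor:Harris-Knorr and p-chains} forces $\bl(\vartheta)$ to cover $\bl(f_P(\mu))$; because $N_P$ is a $p'$-group this block equals $\{f_P(\mu)\}$, so $\vartheta \in \irr(G_\d \mid f_P(\mu))$ as needed. The main step will then be to establish $\bl(\wt{\vartheta})^{\wt{G}} = \wt{B}$. For this, I will let $\psi \in \irr(NG_\d \mid \mu)$ denote the preimage $\Phi_{\mu,\d}^{-1}(\vartheta)$; by the very construction of $\Gamma_{\mu,\d}$ as the composition of $\Phi_{\mu,\d}^{-1}$ with Fong's bijection, $\wt{\vartheta}$ equals the Fong correspondent $\wt{\psi}$. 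Theorem \ref{thm:Really above the Glauberman correspondence with block relation with chains} supplies
$$\left(NA_{\d,\psi},NG_\d,\psi\right) \iso{G} \left(A_{\d,\psi},G_\d,\vartheta\right),$$
and reading condition (iv) of Definition \ref{def:N-block isomorphic character triples} at $J = G$ (so $J_1 = NG_\d$, $J_2 = G_\d$) gives $\bl(\psi)^G = \bl(\vartheta)^G = B$. Proposition \ref{prop:Fong correspondence and block induction} then transforms this into $\bl(\wt{\psi})^{\wt{G}} = \wt{B}$, finishing the well-definedness of $\Delta$.

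The remaining assertions are comparatively light. The defect group statement follows from Theorem \ref{thm:Fong correspondence}(iii): given $P \in \delta(B)$ with $Z < P$ and $Q := \O_p(\wh{P})$, the subgroup $D := QN_0/N_0$ lies in $\delta(\wt{B})$, and because $S$ and $N$ are both $p'$-groups one checks that $\wt{Z}_p$ embeds in $D$ as the image of $\O_p(\wh{Z}) \leq Q$, with $|Z| < |P|$ forcing $\wt{Z}_p < D$. The character triple compatibility in the conclusion is exactly Corollary \ref{cor:Fong correspondence and Glauberman bijection with character triple relations}, applied to the pairs $(\d,\vartheta), (\e,\chi)$ and their $\Delta$-images. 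The principal obstacle in all of this is the intermediate identification $\bl(\psi)^G = B$, which is precisely where having the full block-induction clause in Definition \ref{def:N-block isomorphic character triples} at our disposal (via Theorem \ref{thm:Really above the Glauberman correspondence with block relation with chains}) becomes indispensable.
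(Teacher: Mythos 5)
Your argument is correct and follows essentially the same route as the paper's proof: define $\Delta$ via the chain bijection \eqref{eq:Bijection on chains} together with $\Gamma_{\mu,\d}$, reduce the block condition to the character $\psi=\Phi_{\mu,\d}^{-1}(\vartheta)$ using Theorem \ref{thm:Really above the Glauberman correspondence with block relation with chains} and Proposition \ref{prop:Fong correspondence and block induction}, and invoke Corollary \ref{cor:Fong correspondence and Glauberman bijection with character triple relations} for the triple compatibility. The only cosmetic difference is that you apply condition (iv) of Definition \ref{def:N-block isomorphic character triples} directly at $J=G$ to obtain $\bl(\psi)^G=\bl(\vartheta)^G$, whereas the paper records the intermediate identity $\bl(\vartheta)^{NG_\d}=\bl(\psi)$ first; both are valid and reach the same conclusion.
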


\begin{proof}
Let $\d\in\mathfrak{N}(G,Z)$ with last term $P$ and consider $\wt{\d}\in\mathfrak{N}(\wt{G},\wt{Z}_p)$. If $\vartheta\in\irr(G_\d)$ and $\bl(\vartheta)^G=B$, then $\vartheta$ lies over $f_P(\mu)$ by Corollary \ref{cor:Harris-Knorr and p-chains}. Now, there exists a unique $\psi\in\irr(NG_\d\mid \mu)$ such that $\vartheta=\Phi_{\mu,\d}(\psi)$ and $\Gamma_{\mu,\d}(\vartheta)=\wt{\psi}$ is the Fong correspondent of $\psi$. By Theorem \ref{thm:Really above the Glauberman correspondence with block relation with chains}, we know that $\bl(\vartheta)^{NG_\d}=\bl(\psi)$, hence $\bl(\vartheta)^G=B$ if and only if $\bl(\psi)^G=B$. Furthermore, by Proposition \ref{prop:Fong correspondence and block induction} it follows that $\bl(\psi)^G=B$ if and only if $\bl(\wt{\psi})^{\wt{G}}=\wt{B}$. This shows that the set of characters of $G_\d$ whose block induces to $B$ is mapped via $\Gamma_{\mu,\d}$ to the set of characters of $\wt{G}_{\wt{\d}}$ whose block induces to $\wt{B}$. We define
$$\Delta\left(\overline{(\d,\vartheta)}\right):=\overline{\left(\wt{\d},\Gamma_{\mu,\d}(\vartheta)\right)},$$ 
for every $(\d,\vartheta)\in\C^d(B,Z)$. By \eqref{eq:Bijection on chains}, Corollary \ref{cor:Fong correspondence and Glauberman bijection} and Corollary \ref{cor:Fong correspondence and Glauberman bijection with character triple relations} we conclude that $\Delta$ is a bijection with the required properties.
\end{proof}

\section{Structure of a minimal counterexample}

In this section, we finally prove Sp\"ath's Character Triple Conjecture for $p$-solvable groups. Our proof is inspired by the argument developed in \cite{Rob00}. As in Robinson's work, what we are actually going to show is that a minimal counterexample $G$ to Conjecture \ref{conj:Character Triple Conjecture} satisfies $\O_p(G)\O_{p'}(G)\leq \z(G)$. Since the conjecture trivially holds for abelian groups, Theorem \ref{thm:Main theorem} will then follow as a corollary of (the proof of) Theorem \ref{thm:Minimal counterexample structure}. In this paper we consider subpairs in the sense of \cite{Ols82}, i.e. pairs $(P,b_P)$, where $P$ is a $p$-subgroup of $G$ and $b_P\in\Bl(P\c_G(P))$.

\begin{prop}
\label{prop:Minimal counterexample structure, unstable}
Assume that $G\unlhd A$ is a minimal counterexample to Conjecture \ref{conj:Character Triple Conjecture} with respect to $|G:\z(G)|$ first and then to $|A|$ and consider $Z\leq \z(G)$, $B\in\Bl(G)$ and $d\geq 0$ for which the conjecture fails to hold. Then every block $b\in\Bl(\O_{p'}(G))$ covered by $B$ is $A$-invariant.
\end{prop}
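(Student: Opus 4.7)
The plan is to assume toward contradiction that some block $b\in\Bl(\O_{p'}(G))$ covered by $B$ is not $A$-invariant, and to derive a contradiction via the Fong--Reynolds reduction to the stabilizer $(G_b,A_b)$ combined with the minimality of $(G,A)$. Setting $N:=\O_{p'}(G)$, the block $b$ corresponds to a unique $\mu\in\irr(N)$ since $N$ is a $p'$-group, and $G_b=G_\mu$, $A_b=A_\mu$. First I would reduce to the case where $B$ is $A$-invariant, since otherwise $A_B\lneq A$ would be a strictly smaller counterexample (every $A_{\d,\vartheta}$ appearing in the conjecture's conclusion is contained in $A_B$, and $\n_A(Z)_B=\n_{A_B}(Z)_B$). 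Under this assumption the blocks of $N$ covered by $B$ form a single $G$-orbit by Clifford's theorem, and writing $B^a=B$ for $a\in A$ gives $b^a=b^g$ for some $g\in G$, so $A=GA_b$; hence the failure of $A$-invariance forces $b$ not $G$-invariant, giving $G_b\lneq G$ and, since $\z(G)\leq\z(G_b)$, also $|G_b:\z(G_b)|<|G:\z(G)|$.

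The Fong--Reynolds correspondence applied to $(G,N,b)$ produces a unique $B_0\in\Bl(G_b\mid b)$ with $(B_0)^G=B$ and the same defect groups as $B$, so $B_0$ has defect groups strictly larger than $Z$; moreover $G_b\unlhd A_b$. By minimality of the counterexample, the Character Triple Conjecture \ref{conj:Character Triple Conjecture} holds for $(G_b\unlhd A_b,Z,B_0,d)$, furnishing an $\n_{A_b}(Z)_{B_0}$-equivariant bijection $\Omega_0:\C^d(B_0,Z)_+/G_b\to\C^d(B_0,Z)_-/G_b$ satisfying the character triple condition. To reach the desired contradiction I would lift $\Omega_0$ to an $\n_A(Z)_B$-equivariant bijection $\Omega$ for $(G,A,Z,B,d)$ by applying Proposition \ref{prop:Constructing bijections over bijections} with $K=G$, $A_0=A_b$, $\mathcal{S}=\irr(B)$, $\mathcal{S}_0=\irr(B_0)$, and $\Psi:\irr(B)\to\irr(B_0)$ the inverse Fong--Reynolds bijection. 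The hypotheses of that proposition would be verified using standard Fong--Reynolds theory over a normal $p'$-subgroup: $\Psi$ is $A_b$-equivariant and defect-preserving, it induces the relation $(A_\vartheta,G,\vartheta)\iso{G}(A_{b,\vartheta},G_b,\Psi(\vartheta))$ via compatible projective representations built from a single associated projective representation of $A$, and $\c_A(D)\leq A_b$ holds for some defect group $D$ of $B_0$ (a consequence of Glauberman-correspondence naturality combined with Brauer subpair theory).

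The main obstacle will be reconciling normal $p$-chains: a chain $\d\in\mathfrak{N}(G,Z)$ need not lie in $\mathfrak{N}(G_b,Z)$, since the last term $D_n\unlhd G_\d$ may not be contained in $G_b$. The central claim to establish is that every $G$-orbit in $\C^d(B,Z)$ admits a representative $(\d,\vartheta)$ with $\bl(\vartheta)$ covering exactly $b$ and $\d\in\mathfrak{N}(G_b,Z)$. The first condition follows, after a suitable $G$-conjugation, from Clifford's theorem for blocks applied to the $G$-action on $\Bl(N)$; the second requires a $D_n$-fixed-point argument on the $G_\d$-orbit of $\mu$ in $\irr(N)$, using that the defect groups of $\bl(\vartheta)$ lie inside $G_b$ up to $G_\d$-conjugacy (defect groups of $B_0$ and $B$ coincide) and that the Glauberman correspondence on the $p'$-group $N$ forces the existence of a $D_n$-fixed constituent. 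Once such representatives are available, the Fong--Reynolds character bijection transports $\Omega_0$ through Proposition \ref{prop:Constructing bijections over bijections} into the sought bijection $\Omega$, contradicting that $(G,A)$ is a counterexample and thereby proving that $b$ must be $A$-invariant.
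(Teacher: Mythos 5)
Your overall strategy matches the paper's: both proofs reduce to the stabilizer $G^\vee:=G_\mu=G_b$ and $A^\vee:=A_\mu=A_b$ via the Fong--Reynolds correspondence, then invoke minimality and transport the resulting bijection back up. Your preliminary reduction to $B$ being $A$-invariant (replacing $A$ by $A_B$ using the secondary ordering on $|A|$, then deducing $A=GA_b$ and hence $G_b<G$) is a valid simplification not taken in the paper; the paper instead treats the two cases $G^\vee<G$ and $G^\vee=G,\ A^\vee<A$ directly under the two-stage ordering, which amounts to the same thing. So far, so good.

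There are, however, two genuine gaps. First, the proposed invocation of Proposition \ref{prop:Constructing bijections over bijections} with $K=G$, $A_0=A_b$ is a misfire: that proposition produces bijections $\Phi_J:\irr(J\mid\mathcal{S})\to\irr(J_0\mid\mathcal{S}_0)$ only for \emph{overgroups} $G\le J\le A$, whereas the conjecture requires a bijection between $\C^d(B,Z)_+/G$ and $\C^d(B,Z)_-/G$, whose members are pairs $(\d,\vartheta)$ with $\vartheta\in\irr(G_\d)$ for chain stabilizers $G_\d\le G$. Nothing in Proposition \ref{prop:Constructing bijections over bijections} addresses $p$-chains or their stabilizers. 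What the paper actually does is build a direct bijection $\Upsilon:\C^d(B,Z)/G\to\C^d(B^\vee,Z)/G^\vee$ by sending $(\d,\vartheta)$ to $(\d,\vartheta^\vee)$ where $\vartheta^\vee$ is the Clifford correspondent over $f_P(\mu)$, and then handles the $\iso{G}$-condition with Proposition \ref{prop:Irreducible induction and double relation block}, not Proposition \ref{prop:Constructing bijections over bijections}. Second, you correctly flag the central difficulty --- choosing representatives $(\d,\vartheta)$ with $\d$ a chain in $G_b$ and $\vartheta$ lying over $f_P(\mu)$ --- but your sketch of a ``$D_n$-fixed-point argument'' is too vague to count as a proof of this claim; in particular it does not explain why, after conjugating $P$ into a fixed defect group $D\le G^\vee$ of $B$, the character $\vartheta$ lies over the \emph{specific} Glauberman correspondent $f_P(\mu)$ rather than over $f_P(\mu')$ for some other $P$-invariant $\mu'$. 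The paper's resolution is more delicate: it fixes a $B^\vee$-Sylow subpair $(D,b_D^\vee)$, shows $b_Q$ covers $\bl(f_Q(\mu))$ for every $B$-subpair $(Q,b_Q)\le(D,b_D)$ using Corollary \ref{cor:Harris-Knorr} together with subpair theory (Alperin--Brou\'e/Olsson), and only then builds the $A$-transversal $\mathbb{T}$. Without that precise subpair input, the chain-matching step is not established.
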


\begin{proof}
Set $N:=\O_{p'}(G)$ and fix a block $\bl(\mu)\in\Bl(N)$ covered by $B$. For every subgroup $H\leq A$, set $H^\vee:=H_\mu$. Let $B^\vee\in\Bl(G^\vee\mid \bl(\mu))$ be the Fong--Reynolds correspondent of $B$ over $\bl(\mu)$ \cite[Theorem 9.14]{Nav98}. Since $B$ and $B^\vee$ have a common defect group $D\leq G^\vee$, by \cite[Theorem 3.10]{Alp-Bro79} or \cite[Theorem 2.1]{Ols82} we can find a $B^\vee$-Sylow subpair $(D,b_D^\vee)$ such that $(D,b_D)$ is a $B$-Sylow subpair, where $b_D:=(b_D^\vee)^{D\c_G(D)}$. Notice that, by using Corollary \ref{cor:Harris-Knorr} together with the theory of subpairs, the block $b_D^\vee$ covers $\bl(f_D(\mu))$. Hence $b_D$ covers $\bl(f_D(\mu))$. More generally, the block $b_Q$ covers $\bl(f_Q(\mu))$, for every $B$-subpair $(Q,b_Q)\leq (D,b_D)$. Using this observation, we can construct an $A$-transversal $\mathbb{T}$ in $\C^d(B,Z)$ such that $P\leq G^\vee$ and $\vartheta\in\irr(G_\d\mid f_P(\mu))$, for every $(\d,\vartheta)\in\mathbb{T}$ with $P$ the last term of $\d$.

Consider $(\d,\vartheta)\in\mathbb{T}$ and let $P$ be the last term of $\d$. Notice that $G_\d^\vee=G_{\d,f_P(\mu)}$ and let $\vartheta^\vee\in\irr(G^\vee_\d\mid f_P(\mu))$ be the Clifford correspondent of $\vartheta$ over $f_P(\mu)$. As $A=GA^\vee$, we obtain an $A^\vee$-equivariant bijection
$$\Upsilon:\C^d(B,Z)/G\to \C^d(B^\vee,Z)/G^\vee$$ 
by defining $\Upsilon(\overline{(\d,\vartheta)}^y):=\overline{(\d,\vartheta^\vee)}^y$, for every $(\d,\vartheta)\in\mathbb{T}$ and $y\in A^\vee$. Since ${|G^\vee:\z(G^\vee)|}\leq {|G:\z(G)|}$, if $\mu$ is not $A$-invariant, then there exists an $A^\vee$-equivariant bijection 
$$\Omega^\vee:\mathcal{C}^d(B^\vee,Z)_+/G^\vee\to \mathcal{C}^d(B^\vee,Z)_-/G^\vee$$
such that 
$$\left(A^\vee_{(\d,\vartheta^\vee)},G^\vee_{\d},\vartheta^\vee\right)\iso{G^\vee}\left(A^\vee_{(\e,\chi^\vee)}, G^\vee_{\e},\chi^\vee\right),$$
for every $(\d,\vartheta^\vee)\in\C^d(B^\vee,Z)_+$ and $(\e,\chi^\vee)\in\Omega^\vee(\overline{(\d,\vartheta^\vee)})$. Combining $\Omega^\vee$ with $\Upsilon$ and applying Proposition \ref{prop:Irreducible induction and double relation block}, we obtain an $A$-equivariant bijection
$$\Omega:\C^d(B,Z)_+/G \to \C^d(B,Z)_-/G$$
such that
$$\left(A_{(\d,\vartheta)},G_\d,\vartheta\right)\iso{G}\left(A_{(\e,\chi)},G_{\e},\chi\right),$$
for every $(\d,\vartheta)\in\C^d(B,Z)_+$ and $(\e,\chi)\in\Omega(\overline{(\d,\vartheta)})$. This is a contradiction and therefore $\mu$ must be $A$-invariant.
\end{proof}

\begin{theo}
\label{thm:Minimal counterexample structure}
Assume that $G\unlhd A$ is a minimal counterexample to Conjecture \ref{conj:Character Triple Conjecture} with respect to $|G:\z(G)|$ first and then to $|A|$ and consider $Z\leq \z(G)$, $B\in\Bl(G)$ and $d\geq 0$ for which the conjecture fails to hold. Then $\O_p(G)\O_{p'}(G)\leq \z(G)$.
\end{theo}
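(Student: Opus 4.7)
First, since any central $p$-subgroup of $G$ is contained in $\O_p(G)$, Lemma \ref{lem:Knorr-Robinson for Character Triple Conjecture} forces $Z=\O_p(G)$ in the minimal counterexample, and the hypothesis $Z\le\z(G)$ then gives $\O_p(G)\le\z(G)$. It therefore remains to show $N:=\O_{p'}(G)\le\z(G)$, which I argue by contradiction, supposing $N\not\le\z(G)$.

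Fix $\mu\in\irr(N)$ whose block is covered by $B$. Because $|N|$ is coprime to $p$ each block of $N$ is a singleton, so Proposition \ref{prop:Minimal counterexample structure, unstable} implies that $\mu$ is $A$-invariant and Hypothesis \ref{hyp:Fong correspondence} is in force. With $\wh A$, $\wt A$, $\wt G$, $\wt N$, $N_0$, $S$ and $\wt B$ as defined there, Corollary \ref{cor:CTC for p-solvable in the stable case} supplies an $\wt A$-equivariant, length-preserving bijection
$$\Delta:\C^d(B,Z)/G\longrightarrow \C^d(\wt B,\wt Z_p)/\wt G$$
with the property that the relation $\iso{\wt G}$ on the right-hand side implies $\iso{G}$ on the left. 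The plan is to apply the inductive hypothesis to $(\wt A,\wt G,\wt B,\wt Z_p,d)$ and to transport the resulting bijection back along $\Delta$, producing the bijection that $(A,G,B,Z,d)$ was supposed to lack.

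The main obstacle, and the heart of the argument, is to verify that $(\wt A,\wt G)$ is strictly smaller than $(A,G)$ in the ordering ``$|G:\z(G)|$ first, then $|A|$''. The key input is a Schur-lemma computation: if $\mathcal P\in\proj(A\mid\alpha)$ is the projective representation associated with $(A,N,\mu)$, then for each $z\in\z(G)$ the operator $\mathcal P(z)$ commutes with every $\mathcal P(n)$, $n\in N$, because the factor set vanishes whenever an argument lies in $N$. Schur applied to the ordinary irreducible representation $\mathcal P_N$ affording $\mu$ forces $\mathcal P(z)$ to be scalar; this in turn gives $\alpha(z,g)=\alpha(g,z)$ for every $g\in G$, and a direct computation shows $\wt z:=(z,1)N_0\in\z(\wt G)$. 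The resulting map $\z(G)\to\z(\wt G)$ has kernel $\z(G)\cap N$ and image meeting $\wt N$ trivially, so together with $\wt N\le\z(\wt G)$ one obtains
$$|\z(\wt G)|\ge |S|\cdot\frac{|\z(G)|}{|\z(G)\cap N|}.$$
Substituting $|\wt G|=|G|\cdot|S|/|N|$ yields
$$\frac{|G:\z(G)|}{|\wt G:\z(\wt G)|}\ge \frac{|N|}{|\z(G)\cap N|},$$
which is strictly greater than $1$ precisely because $N\not\le\z(G)$. Minimality of $G$ now yields a bijection $\wt\Omega$ realising Conjecture \ref{conj:Character Triple Conjecture} for $(\wt A,\wt G,\wt B,\wt Z_p,d)$, and composing with $\Delta$ produces the bijection $\Omega$ for $(A,G,B,Z,d)$, the sought contradiction.
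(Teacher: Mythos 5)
Your proof is correct and follows the paper's strategy: force $Z=\O_p(G)$ via Lemma \ref{lem:Knorr-Robinson for Character Triple Conjecture}, use Proposition \ref{prop:Minimal counterexample structure, unstable} to make $\mu$ $A$-invariant (since blocks of the $p'$-group $N$ are singletons), pass to $(\wt{A},\wt{G},\wt{B},\wt{Z}_p)$ via Corollary \ref{cor:CTC for p-solvable in the stable case}, and contradict minimality. The one place you go beyond the paper is the verification that $|\wt{G}:\z(\wt{G})|<|G:\z(G)|$, which the paper dispatches with the chain $|\wt{G}:\z(\wt{G})|\le|\wt{G}:\wt{N\z(G)}|=|G:N\z(G)|<|G:\z(G)|$ while citing only $\wt{N}\le\z(\wt{G})$; what the first inequality really requires is $\wt{N\z(G)}\le\z(\wt{G})$, and your Schur-lemma computation (that $\mathcal{P}(z)$ is scalar for $z\in\z(G)\le\c_A(N)$, hence $\alpha(z,g)=\alpha(g,z)$ for all $g\in G$) supplies precisely this missing step. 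Your bound $|G:\z(G)|/|\wt{G}:\z(\wt{G})|\ge|N:\z(G)\cap N|$ is arithmetically equivalent to the paper's, since $|\wt{N\z(G)}|=|S|\cdot|\z(G)|/|\z(G)\cap N|$. One small caution: the assignment $z\mapsto(z,1)N_0$ is a set map rather than a homomorphism (its "kernel" is the fiber $\z(G)\cap N$, and $(z_1,1)(z_2,1)$ may differ from $(z_1z_2,1)$ by an element of $S_0\not\subseteq N_0$), so the cleanest formulation is to observe that your Schur argument shows $\wt{N\z(G)}\le\z(\wt{G})$ and then count; the conclusion is the same.
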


\begin{proof}
Set $N:=\O_{p'}(G)$ and fix a block $\bl(\mu)\in\Bl(N)$ covered by $B$. Notice that by Lemma \ref{lem:Knorr-Robinson for Character Triple Conjecture} we must have $Z=\O_p(G)$. Thus it's enough to show that $N$ is contained in the center. By Proposition \ref{prop:Minimal counterexample structure, unstable}, we know that $\mu$ is $A$-invariant and therefore we can apply the results obtained in Section \ref{sec:Fong}. Let $\wt{B}\in\Bl(\wt{G})$ be the Fong correspondent of $B$. Since $\wt{N}\leq \z(\wt{G})$, if $N\nleq \z(G)$, then ${|\wt{G}:\z(\wt{G})|}\leq {|\wt{G}:\wt{N\z(G)}|}={|G:N\z(G)|}<{|G:\z(G)|}$ and we obtain an $\wt{A}$-equivariant bijection
$$\wt{\Omega}:\C^d(\wt{B},\wt{Z}_p)_+/\wt{G}\to \C^d(\wt{B},\wt{Z}_p)_-/\wt{G}$$
such that 
$$\left(\wt{A}_{(\wt{\d},\wt{\vartheta})}, \wt{G}_{\wt{\d}}, \wt{\vartheta}\right)\iso{\wt{G}}\left(\wt{A}_{(\wt{\e},\wt{\chi})}, \wt{G}_{\wt{\d}}, \wt{\chi}\right),$$
for every $(\wt{\d},\wt{\vartheta})\in\C^d(\wt{B},\wt{Z}_p)_+$ and $(\wt{\e},\wt{\chi})\in\wt{\Omega}(\overline{(\wt{\d},\wt{\vartheta})})$. Combining $\wt{\Omega}$ with the bijection $\Delta$ given by Corollary \ref{cor:CTC for p-solvable in the stable case}, we obtain an $A$-equivariant bijection
$$\Omega:\mathcal{C}^d(B,Z)_+/G\to\mathcal{C}^d(B,Z)_-/G$$
such that
$$\left(A_{(\d,\vartheta)},G_\d,\vartheta\right)\iso{G}\left(A_{(\e,\chi)},G_{\e},\chi\right),$$
for every $(\d,\vartheta)\in\mathcal{C}^d(B,Z)_+$ and $(\e,\chi)\in\Omega(\overline{(\d,\vartheta)})$. This contradiction shows that $N$ must be contained in the center of $G$.
\end{proof}

Next, we consider the residue of characters. We are going to obtain Theorem \ref{thm:Main theorem residues} as a consequence of an analogous study of a minimal counterexample. We need the following result whose proof can be deduced by Glesser's paper \cite{Gle07}.

\begin{lem}
\label{lem:Residues}
Let $\Gamma_{\mu,\d}$ be the bijection of Corollary \ref{cor:Fong correspondence and Glauberman bijection}. Then
$$r\left(\Gamma_{\mu,\d}(\vartheta)\right)|N|\equiv \pm\mu(1)r(\vartheta)|\wt{N}|\pmod{p},$$
for every $\vartheta\in\irr(G_\d\mid f_P(\mu))$.
\end{lem}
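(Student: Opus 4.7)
The plan is to decompose $\Gamma_{\mu,\d}$ into its two components---the extended Glauberman bijection $\Phi_{\mu,\d}$ of Theorem \ref{thm:Really above the Glauberman correspondence with block relation with chains} and the Fong correspondent of Theorem \ref{thm:Fong correspondence}---and to track $r$ separately through each. Given $\vartheta \in \irr(G_\d \mid f_P(\mu))$, let $\psi \in \irr(NG_\d \mid \mu)$ be the unique character with $\Phi_{\mu,\d}(\psi) = \vartheta$, so that $\Gamma_{\mu,\d}(\vartheta) = \wt{\psi}$ by Corollary \ref{cor:Fong correspondence and Glauberman bijection}. The lemma will follow from two steps: (A) the exact identity $r(\wt{\psi})\,|N| = r(\psi)\,\mu(1)\,|\wt{N}|$, and (B) the congruence $r(\psi) \equiv \pm r(\vartheta) \pmod{p}$.

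\emph{Fong step (A).} Hypothesis \ref{hyp:Fong correspondence} gives $\wh{N} = N_0 \times S$ with $|\wt{N}| = |S|$ and $S$ a $p'$-group, whence
$$|\wt{NG_\d}| \;=\; |\wh{NG_\d}|/|N_0| \;=\; |NG_\d|\,|\wt{N}|/|N|.$$
On the other hand Theorem \ref{thm:Fong correspondence}(iv) supplies the factorisation $\wh{\psi} = \tau_{\wh{NG_\d}}\cdot \wt{\psi}'$; evaluating at $1$ and recalling that $\tau$ extends $\wh{\mu}\wh{\lambda}^{-1}$, which has degree $\mu(1)$, yields $\psi(1) = \mu(1)\,\wt{\psi}(1)$. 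Since $\mu(1)$, $|N|$ and $|\wt{N}|$ are all $p'$-numbers, passing to $p'$-parts gives (A) as an integer identity.

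\emph{Glauberman step (B).} Writing $\d = \{D_0 < \cdots < D_n = P\}$, coprime action yields $\n_N(D_i) = \c_N(D_i)$, hence $N \cap G_\d = \bigcap_i \c_N(D_i) = \c_N(P) = N_P$; consequently $|NG_\d|_{p'}/|G_\d|_{p'} = |N:N_P|$. I next claim $\psi(1)/\vartheta(1) = \mu(1)/f_P(\mu)(1)$. At the level $K = NP$, Proposition \ref{prop:Above the Glauberman correspondence with block relation} gives this ratio directly via $\mu^\diamond\nu \leftrightarrow f_P(\mu)^\diamond\nu_{K_P}$. To propagate it to $J = NG_\d$ one applies the Clifford construction of Proposition \ref{prop:Constructing bijections over bijections}: from $J = KJ_0$ with $J_0 = G_\d$ and $K \cap J_0 = K_P$ one checks $|J : J_\vartheta| = |J_0 : J_{0,\vartheta_0}|$, and then \cite[Lemma 11.24]{Isa76} applied to the character triple isomorphism of Proposition \ref{prop:Above the Glauberman correspondence with block relation} transports the degree ratio to the Clifford correspondents. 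Combining with defect preservation (so $\psi(1)_p = \vartheta(1)_p$) yields
$$\frac{r(\psi)}{r(\vartheta)} \;=\; \frac{|NG_\d|_{p'}}{|G_\d|_{p'}}\cdot \frac{\vartheta(1)}{\psi(1)} \;=\; |N:N_P|\cdot \frac{f_P(\mu)(1)}{\mu(1)}.$$
The Isaacs--Navarro congruence \cite[Theorem 5.2(b)]{Nav-Spa14I} gives $\mu(1) \equiv \pm e\,|N:N_P|\,f_P(\mu)(1) \pmod{p}$, where $e := [\mu_{N_P},f_P(\mu)]$, and Glesser's refinement in \cite{Gle07} supplies $e \equiv \pm 1 \pmod{p}$. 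Substituting yields (B), and inserting (B) into (A) gives the lemma.

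The principal obstacle is invoking Glesser's congruence $e \equiv \pm 1 \pmod{p}$ correctly---this is precisely why the paper points the reader to \cite{Gle07}, and it is the only reason the sign in the conclusion must be read as $\pm$ rather than $+$. The remaining work is a careful but routine degree computation through the Clifford and Fong extensions, relying on the projective-representation bookkeeping already set up in Sections 2--4.
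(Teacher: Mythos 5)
Your proof is correct and follows exactly the route the paper intends---the reference to \cite{Gle07} is a pointer to precisely this two-step computation (an exact Fong degree identity followed by a Glauberman residue congruence), so you have essentially reconstructed the intended argument in full. Both halves check out: (A) is an exact integer identity, since $|\wt{NG_\d}| = |NG_\d|\,|\wt{N}|/|N|$ and $\psi(1) = \mu(1)\,\wt{\psi}(1)$ with all of $|N|$, $|\wt{N}| = |S|$, $\mu(1)$ being $p'$-numbers by Hypothesis \ref{hyp:Fong correspondence}; and (B) reduces, via the Clifford-correspondence degree bookkeeping and $N \cap G_\d = N_P$ by coprime action, to the ratio $r(\psi)/r(\vartheta) = |N:N_P|\,f_P(\mu)(1)/\mu(1) = r(\mu)/r(f_P(\mu))$, which is $\equiv \pm 1 \pmod p$ by the Isaacs--Navarro congruence for the Glauberman correspondence. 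One small bookkeeping note: you stated the congruence from \cite[Theorem 5.2(b)]{Nav-Spa14I} with a $\pm$ sign and then also invoked $e \equiv \pm 1 \pmod p$; in fact the paper's own use of \cite[Theorem 5.2(b)]{Nav-Spa14I} in the proof of Proposition \ref{prop:Above the Glauberman correspondence with block relation} is the exact congruence $\mu(1) \equiv e\,|N:N_P|\,f_P(\mu)(1) \pmod p$ with no $\pm$, and $e \equiv \pm 1 \pmod p$ is then equivalent to the Isaacs--Navarro congruence $r(\mu) \equiv \pm r(f_P(\mu)) \pmod p$ --- you only need one source for the sign, not both.
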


\begin{proof}
This follows from similar computations as the ones in the proofs of \cite[Corollary 3.4 and Theorem 3.8]{Gle07}.
\end{proof}

For completeness, we state the Isaacs-Navarro refinement of the Character Triple Conjecture.

\begin{conj}[Isaacs-Navarro refinement of the Character Triple Conjecture]
\label{conj:Character Triple Conjecture with residues}
There exists a bijection $\Omega$ as in Conjecture \ref{conj:Character Triple Conjecture} such that
$$r(\vartheta)\equiv\pm r(\chi)\pmod{p},$$
for every $(\d,\vartheta)\in\C^d(B,Z)_+$ and $(\e,\chi)\in\Omega(\overline{(\d,\vartheta)})$.
\end{conj}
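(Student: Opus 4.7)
The plan is to prove Theorem \ref{thm:Main theorem residues} by mimicking the minimal counterexample strategy of Proposition \ref{prop:Minimal counterexample structure, unstable} and Theorem \ref{thm:Minimal counterexample structure}, verifying at each reduction step that the constructed bijection additionally satisfies $r(\vartheta)\equiv\pm r(\chi)\pmod p$. Concretely, take $G\unlhd A$ to be a minimal counterexample to Conjecture \ref{conj:Character Triple Conjecture with residues} with respect to $|G:\z(G)|$ first and then $|A|$, and fix witnesses $Z\leq \z(G)$, $B\in\Bl(G)$, and $d\geq 0$. Lemma \ref{lem:Knorr-Robinson for Character Triple Conjecture} reduces to $Z=\O_p(G)$. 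As in Theorem \ref{thm:Minimal counterexample structure}, the goal is to prove $N:=\O_{p'}(G)\leq \z(G)$; the same reasoning used to deduce Theorem \ref{thm:Main theorem} from Theorem \ref{thm:Minimal counterexample structure} then yields the contradiction (since noncentral defect groups force $G$ to be nonabelian).

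For the unstable step, let $\bl(\mu)\in\Bl(N)$ be covered by $B$ and assume $\mu$ is not $A$-invariant. The Fong--Reynolds/Clifford bijection of Proposition \ref{prop:Minimal counterexample structure, unstable} gives an $A^\vee$-equivariant correspondence $\Upsilon:\overline{(\d,\vartheta)}\mapsto \overline{(\d,\vartheta^\vee)}$, where $\vartheta^\vee$ is the Clifford correspondent of $\vartheta$ over $f_P(\mu)$. Because $\vartheta=(\vartheta^\vee)^{G_\d}$, one has
\begin{equation*}
\frac{|G_\d|}{\vartheta(1)}=\frac{|G_\d^\vee|}{\vartheta^\vee(1)},
\end{equation*}
so $r(\vartheta)=r(\vartheta^\vee)$ exactly. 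By minimality in $|A|$ (since $|A^\vee|<|A|$), there is an $A^\vee$-equivariant bijection on $\C^d(B^\vee,Z)/G^\vee$ meeting the refined conjecture, and composing it with $\Upsilon$ through Proposition \ref{prop:Irreducible induction and double relation block} produces a bijection for $(G,A)$ still respecting residues up to sign mod $p$, contradicting minimality. Hence $\mu$ is $A$-invariant.

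With Hypothesis \ref{hyp:Fong correspondence} in force, assume toward contradiction that $N\nleq\z(G)$. Then $|\wt G:\z(\wt G)|<|G:\z(G)|$, so by minimality there is an $\wt A$-equivariant bijection $\wt\Omega$ on $\C^d(\wt B,\wt Z_p)/\wt G$ satisfying the refined conjecture. Composing with $\Delta$ from Corollary \ref{cor:CTC for p-solvable in the stable case} yields the required $A$-equivariant $\Omega$ on $\C^d(B,Z)/G$ respecting $N$-block isomorphism of triples; the remaining task is to certify the residue condition. Let $(\d,\vartheta)\leftrightarrow(\e,\chi)$ correspond under $\Omega$ with Fong--Glauberman images $(\wt\d,\wt\vartheta)\leftrightarrow(\wt\e,\wt\chi)$. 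Lemma \ref{lem:Residues} applied to each pair gives
\begin{equation*}
r(\wt\vartheta)|N|\equiv \pm\mu(1)r(\vartheta)|\wt N|\pmod p \quad\text{and}\quad r(\wt\chi)|N|\equiv \pm\mu(1)r(\chi)|\wt N|\pmod p.
\end{equation*}
Since $N$ is a $p'$-group, $\wt N$ is a central $p'$-subgroup of $\wt G$, and $\mu(1)\mid |N|$, all three of $|N|$, $|\wt N|$, and $\mu(1)$ are units modulo $p$; combining the two congruences with $r(\wt\vartheta)\equiv\pm r(\wt\chi)\pmod p$ (from $\wt\Omega$) and cancelling the unit $\mu(1)|\wt N|$ yields $r(\vartheta)\equiv \pm r(\chi)\pmod p$, the desired contradiction. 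The most delicate point I anticipate is controlling the three independent $\pm$ signs through this cancellation, but since the target is only required up to a single $\pm$ their product collapses harmlessly; beyond that the argument is a direct transcription of Theorem \ref{thm:Minimal counterexample structure} with Lemma \ref{lem:Residues} inserted at the residue check.
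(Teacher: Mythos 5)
Your proposal is correct and follows the same route as the paper: the paper's proof of Theorem~\ref{thm:Minimal counterexample structure with residues} is a terse remark that one should run the argument of Proposition~\ref{prop:Minimal counterexample structure, unstable} and Theorem~\ref{thm:Minimal counterexample structure} again, observing that Clifford/Fong--Reynolds induction preserves the $p$-residue exactly and inserting Lemma~\ref{lem:Residues} at the Fong-correspondence step, and your write-up simply makes those residue computations explicit (including the cancellation of the units $|N|$, $|\wt N|$, $\mu(1)$ modulo~$p$, which is indeed where the content of Lemma~\ref{lem:Residues} enters).
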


Finally, using the proof of Theorem \ref{thm:Minimal counterexample structure} and Lemma \ref{lem:Residues} we obtain a similar structure theorem for a minimal counterexample of Conjecture \ref{conj:Character Triple Conjecture with residues}.

\begin{theo}
\label{thm:Minimal counterexample structure with residues}
Assume that $G\unlhd A$ is a minimal counterexample to Conjecture \ref{conj:Character Triple Conjecture with residues} with respect to $|G:\z(G)|$ first and then to $|A|$ and consider $Z\leq \z(G)$, $B\in\Bl(G)$ and $d\geq 0$ for which the conjecture fails to hold. Then $\O_p(G)\O_{p'}(G)\leq \z(G)$.
\end{theo}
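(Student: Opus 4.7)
The plan is to follow the route of Theorem \ref{thm:Minimal counterexample structure}, grafting on residue bookkeeping at each step. The initial reduction supplied by Lemma \ref{lem:Knorr-Robinson for Character Triple Conjecture} is harmless for residues, since the bijection it produces acts as the identity on the second coordinate; thus we may still assume $Z=\O_p(G)$ and it suffices to prove that $N:=\O_{p'}(G)\leq\z(G)$.

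First I would establish the residue-sensitive analog of Proposition \ref{prop:Minimal counterexample structure, unstable}, namely that in a minimal counterexample to Conjecture \ref{conj:Character Triple Conjecture with residues} every block of $N$ covered by $B$ is $A$-invariant. The proof is the same as for Proposition \ref{prop:Minimal counterexample structure, unstable} provided one checks that the Fong--Reynolds/Clifford correspondence $\vartheta\mapsto\vartheta^\vee$ used there preserves $p$-residues exactly. This is immediate: because $\vartheta=(\vartheta^\vee)^{G_\d}$ one has $\vartheta(1)=|G_\d:G_\d^\vee|\,\vartheta^\vee(1)$, whence
\[
r(\vartheta)=\dfrac{|G_\d|_{p'}}{\vartheta(1)_{p'}}=\dfrac{|G_\d^\vee|_{p'}}{\vartheta^\vee(1)_{p'}}=r(\vartheta^\vee).
\]
Applying the inductive hypothesis on $|G:\z(G)|$ to $(G^\vee,A^\vee)$ yields a bijection $\Omega^\vee$ with the residue condition, and transporting it back through $\Upsilon$ produces the required $\Omega$ on $G$; this gives a contradiction unless $\mu$ is $A$-invariant.

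Assuming now $\mu$ is $A$-invariant, I would reproduce the Fong-correspondence argument of Theorem \ref{thm:Minimal counterexample structure}. The novelty is to check that the bijection $\Delta$ of Corollary \ref{cor:CTC for p-solvable in the stable case} transports the residue congruence, and this is precisely what Lemma \ref{lem:Residues} is designed for: with $\wt{\vartheta}:=\Gamma_{\mu,\d}(\vartheta)$,
\[
r(\wt{\vartheta})\,|N|\equiv\pm\mu(1)\,r(\vartheta)\,|\wt{N}|\pmod{p}.
\]
The quantity $c:=\mu(1)|\wt{N}|/|N|$ is a well-defined element of $\mathbb{F}_p^\times$, since $N$ and $\wt{N}$ are $p'$-groups and $\mu(1)\mid|N|$; moreover $c$ depends only on the fixed block $\bl(\mu)$, not on $\vartheta$ nor on $\d$. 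Hence the residue congruence $r(\wt{\vartheta})\equiv\pm r(\wt{\chi})\pmod{p}$ furnished inductively on $\wt{G}$ multiplies through by $c$ on both sides and yields $r(\vartheta)\equiv\pm r(\chi)\pmod{p}$, as required.

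The main obstacle is the uniformity of the scaling factor $c$ in Lemma \ref{lem:Residues}: it is precisely this independence from $(\d,\vartheta)$ that allows the residue condition to descend across the Fong correspondence without being polluted by $\vartheta$-dependent units. Once this is granted, combining $\Delta$ with the inductively obtained $\wt{\Omega}$ (which exists whenever $N\not\leq\z(G)$, as then $|\wt{G}:\z(\wt{G})|<|G:\z(G)|$) produces a bijection $\Omega$ satisfying both the $G$-block isomorphism of character triples (by Corollary \ref{cor:CTC for p-solvable in the stable case}) and the residue congruence, contradicting the minimality of $G$.
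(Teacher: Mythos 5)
Your proposal is correct and follows essentially the same route as the paper's (terse) proof: reduce via the residue-preserving Clifford correspondence to the $A$-invariant case, then descend through the Fong correspondence using Lemma \ref{lem:Residues} and the constancy of the unit $\mu(1)|\wt{N}||N|^{-1}\pmod p$. You have merely spelled out the two residue verifications that the paper dispatches with the phrases ``induction of characters preserves the residue'' and ``using Lemma \ref{lem:Residues}''.
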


\begin{proof}
Set $N:=\O_{p'}(G)$ and fix a block $\bl(\mu)\in\Bl(N)$ covered by $B$. By the proof of Lemma \ref{lem:Knorr-Robinson for Character Triple Conjecture} we know that $Z=\O_p(G)$ and it's enough to show that $N\leq \z(G)$. Proceeding as in the proof of Proposition \ref{prop:Minimal counterexample structure, unstable} and noticing that induction of characters preserves the residue of characters, we deduce that $\mu$ must be $A$-invariant. Then, using Lemma \ref{lem:Residues} and adapting the the proof of Theorem \ref{thm:Minimal counterexample structure}, we obtain $N\leq \z(G)$.
\end{proof}

\bibliographystyle{alpha}
\bibliography{References}

BU WUPPERTAL, GAUSSSTR. $20$, $42119$ WUPPERTAL, GERMANY

\textit{Email address:} \href{mailto:rossi@uni.wuppertal.de}{rossi@uni.wuppertal.de}

\end{document}